\documentclass[12pt]{amsart}
\usepackage{geometry}
\usepackage[english]{babel}
\usepackage[utf8]{inputenc}
\usepackage{csquotes,comment,dirtytalk} 
\usepackage{graphicx,epstopdf,float} 
\usepackage{framed} 
\usepackage{amsmath,amsthm,amssymb,old-arrows,mathtools}
\usepackage{dirtytalk}
\usepackage{tikz, tikz-cd}
\usepackage{xfrac} 
\usepackage{mathrsfs} 
\usepackage[pdftex]{hyperref} 
\usepackage{cancel} 

\usepackage{todonotes}

\newcommand{\B}{\rm (B) }

\newcommand{\frob}{\operatorname{Frob}}
\newcommand{\Gal}{\operatorname{Gal}}
\newcommand{\Q}{\mathbb{Q}}
\newcommand{\Qb}{\overline{\mathbb{Q}}}
\newcommand{\gl}{\mathrm{GL}}
\renewcommand{\sl}{\operatorname{SL}}
\newcommand{\Z}{\mathbb{Z}}
\newcommand{\K}{\mathbb{K}}
\newcommand{\bb}[1]{\mathbb{#1}}

\newcommand{\tr}{\operatorname{tr}}

\numberwithin{equation}{section} 

\hypersetup{
colorlinks,
citecolor=black,
filecolor=black,
linkcolor=black,
urlcolor=black
}

\newtheorem{theorem}{Theorem}[section]
\newtheorem*{theorem*}{Theorem}

\newtheorem{proposition}[theorem]{Proposition}

\newtheorem{definition}[theorem]{Definition}
\newtheorem*{definition*}{Definition}

\newtheorem{lemma}[theorem]{Lemma}
\newtheorem*{lemma*}{Lemma}

\newtheorem*{fact*}{Fact}

\newtheorem*{conjecture*}{Conjecture}

\newtheorem*{corollary*}{Corollary}

\theoremstyle{definition}
\newtheorem{remark}[theorem]{Remark}

\newtheorem*{assumption*}{Assumption}
\newtheorem{assumption}{Assumption}

\usepackage[style=alphabetic]{biblatex}
\title[Bogomolov property for modular Galois representations with...]{Bogomolov property for modular Galois representations with nontrivial nebentypus}
\author{Pietro Piras}
\date{March 2026}

\begin{document}
\maketitle
\begin{abstract}
    A field in which the (logarithmic) Weil height is bounded from below by a strictly positive constant is said to have the Bogomolov property (property (B)). Given a normalized eigenform $f\in S_k(\Gamma_0(N))$ Amoroso and Terracini \cite{AT25} proved (B) for the field \say{cut out} by the adelic representation associated to $f$ under some assumptions on $f$, generalizing the earlier work of Habegger \cite{Ha13} on elliptic curves. In this paper we extend this result to the case of normalized eigenforms with nontrivial nebentypus character. We also introduce the notion of ADZ field, inspired by earlier work of \citeauthor{ADZ14} \cite{ADZ14}, exhibiting a class of fields in which property (B) is preserved under (arbitrary) composition.
\end{abstract}
\section{Introduction}\label{sec1}
Let $h:\Qb\rightarrow \bb{R}$ be the (absolute, logarithmic) Weil height (see definition \ref{def:height}). In the seminal paper \cite{BZ01} the authors define the \emph{Bogomolov property} for any set $\mathcal{A}\subseteq \Qb$:
\begin{displayquote}
    \emph{A set $\mathcal{A}\subseteq \Qb$ has the Bogomolov property (property (B) for short) if there is a constant $C>0$ such that for all nonzero elements $\alpha\in\mathcal{A}$ that are not roots of unity we have $h(\alpha)>C$.}
\end{displayquote}
In the literature property (B) has been extensively investigated when $\mathcal{A}$ is a Galois extension of the rationals (but see \cite{DK25} for a very recent result of property (B) for non-Galois extensions). The question of whether a field has property (B) is related to an open conjecture of Lehmer which asks to find an constant $C>0$ such that for all nonzero $\alpha\in\Qb$ that are not roots of unity one has
\begin{equation*}
    h(\alpha)\geq \frac{C}{\deg\alpha}.
\end{equation*}
Fields in which Lehmer's conjecture is true are said to have the Lehmer property and of course having property (B) is stronger than having the Lehmer property (conjecturally $\overline{\bb{Q}}$ has the Lehmer property but it does not have (B) as the sequence $\{2^{1/n}\}_{n\in\bb{N}}$ shows). For a review of Lehmer's problem see Smyth's excellent survey \cite{S08}. Lehmer's conjecture has been generalized by Rémond in the following
\begin{conjecture*}[\cite{Rem17}, Conjecture 3.4]
    Let $\Gamma$ be a finitely generated subgroup of $\overline{\bb{Q}}^*$ and let $\Gamma_{div}=\{\gamma\in\overline{\bb{Q}}^*\mid \gamma^n\in \Gamma \text{ for some } n\in\bb{N}\}$ be its division group. There is a positive constant $c$ such that
  \begin{equation*}
    h(\alpha)\geq \frac{c}{[\bb{Q}(\Gamma_{\operatorname{div}},\alpha):\bb{Q}(\Gamma_{\operatorname{div}})]}.
   \end{equation*}
\end{conjecture*}
\noindent For more on Rémond's conjecture see for example \cite{Rem17}, \cite{Amo16},\cite{Gri17}, \cite{Pot21}, \cite{Ple24},\cite{Ple22},\cite{Ple24a}. We now give a list of some fields with property (B).
\begin{enumerate}
    \item By a theorem of Northcott (see \cite[Theorem 1.6.8.]{MR2216774} for example), given an integer $d> 0$ and $h_0\in\bb{R}_{> 0}$, for any number field $\bb{K}$ there are only a finite number of elements of degree less than $d$ and height less than $h_0$. This implies that any number field has property (B).
    \item In \citeyear{SchProd} Schinzel \cite{SchProd} showed that the field of totally real numbers $\Q^{tr}$ has (B). This was generalized by Garza in \cite{Ga07} to a lower bound on the height of algebraic numbers with at least one real conjugate. Moreover one can prove Schinzel's result using Bilu's equidistribution theorem \cite{Bi97} which roughly states that points of small height equidistribute (in a weak sense) on the unit circle inside the complex numbers.
    \item In \cite{BZ01} the authors show that if $\bb{L}/\bb{Q}$ is a field with bounded local degree at some nonempty set of rational primes, then $\bb{L}$ has (B). This is done by proving an inequality on the discriminant of an algebraic number $\alpha\in \bb{L}$ relating the ramification index, the residual degree and the normalized variance
    \begin{equation*}
        V_p(\alpha;\bb{L}):=\frac{1}{m^2}\sum_{x\in\bb{F}_q\cup\infty}\left(N_x-\frac{m}{q+1}\right)
    \end{equation*}
    where $N_x$ is the number of conjugates of $\alpha$ that have the same reduction modulo the prime ideal above $p$.
    \item The maximal abelian extension of $\bb{Q}$ in \cite{AD00} and more generally for number fields \cite{AZ10}.
    \item Galois extensions of number fields with Galois group with finite index center (\cite[Theorem 1.5.]{ADZ14}).
    \item A celebrated result of Habegger \cite{Ha13} states that the field $\Q(E_{tors})$ obtained by adjoining the coordinates of an elliptic curve $E$ defined over $\bb{Q}$ has (B).
    \item Recently in \cite{AT25} the authors define property (B) for continuous morphisms of topological groups reframing previous results in the literature (see section \ref{sec3}). In the same article they show (B) for representations coming from modular forms under some conditions, generalizing \cite{Ha13}.
    \item In \cite{conti2025bogomolov} the authors prove property (B) for representation $$\rho:G_\K\rightarrow \gl_d(\Z_p)$$ with $\K$ a number field, such that $\rho$ maps an inertia subgroup at a prime above $p$ surjectively onto an open subgroup of $ \gl_d(\Z_p)$. With this result they provide many new examples of fields with property (B).
\end{enumerate}
The goal to this paper is to improve the main result of \cite{AT25} where the authors show that under some assumptions, given a compatible system of representations $\{\rho_{f,v}\}$ associated to a newform $f\in S_k(\Gamma_0(N))$ with $k\geq 2$ the field cut out by the product representation
\begin{equation*}
    \widehat{\rho}_f=\prod_v \rho_{f,v}
\end{equation*}
has property (B). Here we show an analogous result assuming nontrivial nebentypus. We give now the statement of the main theorem in a qualitative version (see Theorem \ref{main:thm} for the precise statement)
\begin{theorem*}
    Let $f\in S_k(\Gamma_0(N),\chi)$ a normalized eigenform with $q$-expansion $$f=\sum a_nq^n.$$ If $p$ is a prime outside the level such that $a_p=0$ and the mod $p$ representation associated to $f$ is big, then the adelic representation 
    \begin{equation*}
        \widehat{\rho}_f:G_\Q\rightarrow \gl_2(\widehat{\mathcal{O}}_f).
    \end{equation*}
    has property (B).
\end{theorem*}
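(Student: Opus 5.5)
We plan to follow the strategy of Habegger \cite{Ha13} as reorganized by Amoroso and Terracini \cite{AT25}. Write $L$ for the field cut out by $\widehat{\rho}_f$. We will work at the fixed prime $p\nmid N$ with $a_p=0$. First, we will use the property (B) machinery for morphisms of topological groups from section \ref{sec3}: property (B) for $\widehat{\rho}_f$ reduces to a lower bound on the height of $\alpha\in L^*$ that are not roots of unity. We will split $\widehat{\rho}_f=\rho_{f,\mathfrak p}\times\rho'$, where $\rho_{f,\mathfrak p}=\prod_{v\mid p}\rho_{f,v}$ and $\rho'$ is the product of the other components. Let $L_p$ and $L'$ be the corresponding fields. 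Then $L'$ is unramified at $p$, so a Frobenius at $p$ makes sense on $L'$. On $L_p$ we will use the local information at $p$.

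The local input is the following. Since $a_p=0$, the representation $\rho_{f,\mathfrak p}$ restricted to $G_{\Q_p}$ is (by Fontaine--Laffaille / Breuil theory for $p\nmid N$, $p>k$ or the supersingular case) induced from a character of the unramified quadratic extension $\Q_{p^2}$. Concretely, $\rho_{f,\mathfrak p}$ is described by Lubin--Tate characters of height two twisted by the nebentypus. The key consequence is the one used in \cite{AT25}. There is an explicit element $\phi$ in the decomposition group, a lift of $\mathrm{Frob}_p^2$, acting on $\rho_{f,\mathfrak p}$ as a central scalar. Here $\phi$ acts as multiplication by $-p^{k-1}\chi(p)$ instead of $-p^{k-1}$ in the trivial-nebentypus case, since $\phi$ has characteristic polynomial $X^2+\chi(p)p^{k-1}$ with $a_p=0$. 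So we will replace $\mathrm{Frob}_p^2$ by a suitable power $\mathrm{Frob}_p^{2m}$, with $m$ the order of $\chi(p)$. This makes the scalar $\pm p^{m(k-1)}$ lie in $\Z_p^*\cdot p^{\Z}$ and commute appropriately. We will also pass to a finite extension to absorb the nebentypus, namely the field cut out by $\chi$, which is abelian and harmless by Northcott. The element $\phi$ then lies in the center of $\widehat{\rho}_f(G_\Q)$ up to a finite index, modulo a suitable congruence subgroup. The bigness of the mod $p$ image will give that the image contains $\sl_2$ of a finite-index subring at $p$, and hence that this central element is realized.

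With this, we will run the Habegger argument. For $\alpha\in L$ with $\alpha\neq0$ not a root of unity, take $\sigma$ a lift of $\phi$ acting as $\alpha\mapsto\alpha^{\,\cdot}$-compatible with the Frobenius on $L'$. We will show the standard two congruences. On the unramified part, $\sigma(\alpha)\equiv\alpha^{p^{2m}}$ modulo primes above $p$. On the ramified part, centrality and the Lubin--Tate description give $\sigma(\alpha)\equiv\alpha$ modulo a power of primes above $p$ after raising to a bounded power; this is the Galois-theoretic metric step of \cite{Ha13}, Lemma ``$\sigma\alpha-\alpha$ small''. Then, as in \cite{AT25}, we treat two cases. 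If $\sigma(\alpha)^{?}\neq\alpha^{p^{2m}}$, a product formula on $\sigma(\alpha)-\alpha^{p^{2m}}$ gives $h(\alpha)\gg\log p/p^{2m}$. Otherwise $\alpha$ lies in a field fixed by a large subgroup, and we apply Bilu equidistribution / the descent step of \cite{Ha13} to conclude.

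The main obstacle we expect is the central-element step. With nontrivial $\chi$, the image of $\widehat{\rho}_f$ is no longer contained in $\{g:\det g\in\Z_p^*\}$ in the same way, because the determinant is $\chi\cdot\varepsilon^{k-1}$. The scalar $\phi$ must be shown to be central in the full adelic image, which is a problem because of the inner twists of $f$ (since $\chi\ne1$, $f$ may have inner twists and the image is only open in a smaller group). We will first try to handle this by taking $m$ large enough to kill all inner-twist characters evaluated at $p$. We will then invoke the open-image results of Ribet/Momose for the $p$-adic image, combined with the bigness hypothesis, to produce $\phi$ in the center.
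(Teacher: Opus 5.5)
\textbf{Genuine gap.} The proposal fails at the central-element step, which is exactly where the nebentypus case needs care, and its two-case mechanism does not work on the $p$-adic tower.

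\emph{The proposed central element cannot exist.} No Galois element acts on $\rho_{f,v}$, $v\mid p$, as the scalar $-\chi(p)p^{k-1}$: $\rho_{f,v}$ takes values in $\gl_2(\mathcal{O}_{f,v})$, and $p^{k-1}$ is not a unit. That scalar is the action of $\frob_p^2$ on the prime-to-$p$ part $\rho_f^{(p)}$. There it only shows, as in Proposition \ref{prop:adz}, that this part is unramified at $p$, ADZ at $p$, and hence has (B). Passing to the power $m$ is pointless, since every scalar is central.

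\emph{A Frobenius-type $\sigma$ gives nothing on the $p$-adic tower.} The congruence $\sigma(\alpha)\equiv\alpha^{p^{2m}}$ holds only modulo primes whose ramification index over $p$ is unbounded. So your first-case product-formula bound degenerates for $\alpha$ in the compositum.

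\emph{The central element actually needed lies in inertia.} Let $\K$ be cut out by the inner-twist characters, so that the crystalline twist $\omega$ dies locally (Lemma \ref{lemma:fiss}). Over $L=\K_\mathfrak{p}\Q_q$ with $q=p^2$, each $\rho_{f,v}$ becomes $(\varepsilon_2^{k-1}\oplus\varepsilon_2^{\prime \ k-1})\otimes\mu_{\sqrt{-1}}$. An inertia element $\tau$ with $\varepsilon_2(\tau)=2$ then acts as $2^{k-1}$ on every $\rho_{f,v}$, $v\mid p$, and trivially on the unramified prime-to-$p$ part. It is therefore central in the whole Galois group, and $\tau^{p-1}$ has $\varepsilon_p$-value $g=4^{p-1}>1$. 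Its role is not a congruence but the substitution $\beta=\tau(\alpha)/\alpha^g$ of Proposition \ref{prop:specialset}. This guarantees that $\sigma(\beta)/\beta$ is never a nontrivial $p$-power root of unity, so raising to $p^\lambda$ in the acceleration lemma cannot kill $\sigma(\beta)-\beta$. Your proposal has no substitute for this.

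\emph{The deferred ``otherwise'' case is where the proof lives.} You leave it to Bilu equidistribution or ``the descent step''. It is there, not in ``realizing the central element'', that bigness is used. One needs the following.
\begin{enumerate}
\item The tower $\bb{L}_n=\K(p^n)$ with the explicit local data of Proposition \ref{Prop:gal}: $L(p^n)/L$ is totally ramified, the last ramification group is $H_n=\Gal(L(p^n)/L(p^{n-1}))$, and $e_n/(i_n+1)\le p^2$.
\item The normal closure lemma (Proposition \ref{prop:ncl}): the normal closure of $H_n$ in $\Gal(\K(p^n)/\K)$ is all of $\Gal(\K(p^n)/\K(p^{n-1}))$. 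This is proved by showing that the log image in the trace-$\mathbb{F}_p$ matrices is a subspace stable under $\sl_2(\mathcal{A}/p\mathcal{A})$, which is where (P2) enters. The conditions in (P3) are also needed there.
\item The descent itself: take $n$ minimal with $\beta$ at level $n$. Some conjugate of an element $\sigma\in H_n$ moves $\beta$. Lemma \ref{lemma:adzs}, applied over the orbit of the centralizer of $\sigma$, gives a bound independent of $n$.
\item Theorem \ref{thm:composto}, which glues this with the prime-to-$p$ part, using that the latter is unramified at $p$ and has (B).
\end{enumerate}

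\emph{Inner twists.} Choosing $m$ to kill inner-twist characters evaluated at $p$ does not address them. They affect the global image of $G_\Q$, and the paper handles them by restricting to $G_\K$, where the image is controlled.
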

The main difficulty to proving this theorem is the existence of inner twists of $f$ that complicate the image of the representation. To resolve this issue one uses that, up to passing to a finite extension of the coefficient field of $f$, for big enough $p$ the image of the representation at $p$ is controlled. In fact one shows that a finite extension of the field cut out by the representation has (B). One could ask for the case $k=1$ but in that case one does not have a compatible system of representations but instead one complex representation; as is well known the image is finite so the field cut out by the representation is finite and (B) is guaranteed by Northcott's theorem.\\
To study the representation outside of $p$ we introduce the notion of ADZ field at a place $v$, inspired by \cite{ADZ14}. Given a number field $\bb{K}$, a Galois extension $\bb{F}/\bb{K}$ is ADZ at a place $v$ of $\bb{K}$ if the field fixed by the center of $\Gal(\bb{F}/\bb{K})$ has (uniformly) bounded local degrees over $v$. In \cite[Theorem 1.5]{ADZ14} the authors show that ADZ fields have (B) \emph{uniformly} in $v$, $[\bb{K}:\bb{Q}]$ and the bound $d_0$ of the local degrees, but this property has not been further investigated. It turns out that being ADZ is preserved with respect to the compositum i.e. that the compositum of ADZ fields at the same place is again ADZ; this is in stark contrast with the general bad behavior of property (B) under the compositum of fields where even finite extensions of (infinite) fields with (B) need not have (B) (see for example \cite[Theorem 5.3]{ADZ14}). The proof is entirely group-theoretic and rests on the fact that one can precisely describe the center of the Galois group of compositum in terms of the centers of the other Galois groups (Lemma \ref{lemma:centro}). Moreover, since there are only a finite number of extension of bounded degree of a local field, property ADZ is preserved under \emph{arbitrary} compositum of ADZ fields with uniformly bounded local degrees (Theorem \ref{thm:compostoadz}). Using ADZ fields one shows (Proposition \ref{prop:adz}) that the representation outside of $p$ has (B) when restricted to an ADZ field at a place above $p$ and in particular to every number field. Fields with ADZ property seem therefore to be a well behaved class of fields with respect to property (B); for example, since every abelian extension $\bb{L}$ of $\bb{K}$ is ADZ (at every place of $\bb{K}$), the compositum of $\bb{L}$ with an abelian extension is ADZ and has therefore (B) (see remark \ref{rem:adz}).
\subsection*{Outline of the paper}In section \ref{sec2} we recall the Deligne-Shimura construction of the system of compatible Galois representations associated to a normalized eigenform $f\in S_k(\Gamma_0(N),\chi)$ with $k\geq 2$ and its main properties such as its values on Frobenius elements.\\
In section \ref{sec3} we recall the definition of Weil height and its properties. We also state property (B) for morphisms of topological groups and reframe previous results in the literature under this definition.\\
In section \ref{sec4} we state the main result of the paper (Theorem \ref{main:thm}) with the necessary assumptions needed to prove our result. Note that one cannot trivially apply the methods of \cite{AT25} in our context i.e. in the case of nontrivial nebentypus $\chi$ since if $\chi\neq 1$ there may be nontrivial inner twists and the hypotheses of \cite{AT25} may not be fulfilled. One has to take into account that even after descending to a representation in the ring of traces (Carayol's result) this ring will be larger than $\bb{Z}_p$, as was instead in \cite{AT25}, since one has to adjoin the image of the crystalline character $\omega$ that is related to $\chi$ by means of \eqref{eqn:nebentypus}. We had to therefore modify the assumptions of \cite{AT25} on the image of the mod $p$ representation accordingly.\\
Section \ref{sec5} is the proof of the main result. It is divided in three parts. We begin by recalling some results of \cite{ADZ14} about lower bounds for the height and the \say{acceleration} lemma \cite[Lemma 2.1]{ADZ14}; we also state the supplementary assumptions needed to prove the \say{merging} Theorem \ref{thm:composto}. This is a slight generalization of \cite[Proposition 3.4.]{AT25} where the authors give some conditions (Assumptions 3.2 and 3.3 \emph{loc.cit.}) on a field $\bb{L/\Q}$ so that if $\bb{F}/\bb{Q}$ is unramified at $p$ with (B) then $\bb{FL}$ has (B). Theorem \ref{thm:composto} is an analogous statement where $\bb{F}$ and $\bb{L}$ are extensions of a number field $\bb{K}$, $\bb{F}$ is unramified at a prime $\mathfrak{p}$ of $\bb{K}$ above $p$ and $\bb{L}$ satisfies Assumptions \ref{ass1} and \ref{ass2}. In the following subsection we analyze the representation outside of $p$ and give the definition of an \emph{ADZ field} (definition \ref{def:adz}). We show then that by restricting to an ADZ field at a place above $p$, the representation outside of $p$ is again ADZ and therefore has (B). 

After dealing with the representation outside of $p$ we turn to the $p$-part of the representation. By results of Breuil and Scholl we see that under our assumptions on $f$, the $p$-part of the representation is a crystalline representation $V_{k,0,\omega}$. This allows us to study the associated Galois extension and the one fixed by the each reduction modulo $p^n$ in a manner similar to \cite{AT25}. Finally, by assuming the \say{normal closure lemma} (Proposition \ref{prop:ncl}), we prove the main theorem of the paper by showing that the hypotheses of Theorem \ref{thm:composto} are satisfied for the fields fixed by the representation outside of $p$ and the $p$-part. \\
In section \ref{sec6} we prove the technical normal closure lemma completing the proof of the main theorem.\\
The last section is devoted to examples. We use an unpublished result of \citeauthor{Mas22} to produce modular forms and representations that satisfy our assumptions. For this we also need to guarantee that our representation mod $p$ does not come from a representation associated to a modular form of lower level. This is in general hard to do so we restrict to $k=2$ where a result of Diamond is available.
\subsection*{Notations}
Given a group $G$ its \emph{center} will always be denoted by $Z(G)$. All the fields considered will be of characteristic 0. Global fields will be denoted with bold letter such as $\bb{K,F,L}\ldots$ while local fields will be denoted as $K,L,F\ldots$ For every field (global or not) $\bb{K}$ inside an algebraic closure $\overline{\bb{K}}$ we will denote by $G_\bb{K}$ the Galois group of $\overline{\bb{K}}/\bb{K}$. For any ring $R$ we denote by $M_n(R)$ the ring of $n\times n$ matrices with coefficients in $R$. Throughout the article the $p$-adic cyclotomic character is the character
\begin{align*}
    \varepsilon_p:G_\Q&\rightarrow \Z_p^\times\\
    \sigma&\mapsto (\chi_{p^n}(\sigma))_n
\end{align*}
where $\chi_{p^n}(\sigma)$ satisfies  $\sigma(\zeta_{p^n})=\zeta_{p^n}^{\chi_{p^n}(\sigma)}$ for all $\sigma\in G_\Q$ and for all primitive $p^n$-roots of unity $\zeta_{p^n}$. Let $K$ be a local field complete with respect to a valuation $v_K$. We say that the valuation $v_K$ is \emph{normalized} if $v_K(\pi_K)=1$ where $\pi_K$ is a uniformizer of $K$ and we shall assume that all our valuations are normalized. Let $p$ be the characteristic of the residue field of $K$, then we define the absolute value $|\cdot|_v$ as
\begin{equation*}
    |x|_v=p^{-v_K(x)/e}
\end{equation*}
so we have that $|\pi_K|_v=p^{-1/e}$ and $|p|_v=|\pi_K^{e}|=p^{-1}$. Note that this is different from the convention of for example \cite{serre2013local} where we have the absolute value
\begin{equation*}
    ||x||_v=p^{-fv_K(x)}
\end{equation*}
so that $||\pi_K||_v=p^{-f}$ and $||p||_v=||\pi_K^{e}||_v=p^{-ef}$. Of course the relation between the two is given by $||\cdot||_v=|\cdot|_v^{ef}$ and the product formula reads as
\begin{equation*}
    1=\prod_v|x|^{ef}_v=\prod_v||x||_v
\end{equation*}
as $v$ varies through all normalized valuations of $K$. We will always use the absolute value $|\cdot |_v$. Let $L/K$ be a finite Galois extension with valuation $v_L$ extending $v_K$. Let $i\geq -1$ be an integer, we define the $i$-th ramification group as
\begin{equation*}
    G_i=\{\sigma \in\Gal(L/K)\mid v_L(\sigma(x)-x)\geq i+1 \text{ for all }x\in\mathcal{O}_L \}.
\end{equation*}
The greatest common divisor of $N,M\in\bb{Z}$ is denoted by $(N,M)$.
\section{The Deligne-Shimura construction}\label{sec2}
Let $f \in S_k(\Gamma_0(N),\chi)$ be a normalized eigenform for the Hecke algebra of weight $k\geq 2$ and level $N$ with $q$-expansion $f=\sum_{n=1}^\infty a_n(f)q^n$. As is known classically (see \cite[pg 21]{Ri77} for example) the field $\mathbb{K}_f=\Q(\{a_n(f)\}_{n\geq 1})$ is a number field (that shall be called the \emph{coefficient field} of $f$) and we denote its ring of integers $\mathcal{O}_f$. In the case of weight 2, associated to $f$ is an abelian variety $A_f$ with an action of the absolute Galois group $G_{\Q}:=\Gal(\Qb/\Q)$ on its torsion points. One can show that $G_{\Q}$ acts $\mathbb{K}_f\otimes \Q_{\ell}$-linearly on $T_\ell(A_f)\otimes \Q_\ell$ (the tensor product being over $\Q$) where $T_\ell(A_f)$ is the $\ell$-adic Tate module of $A_f$. Since $T_\ell(A_f)\otimes \Q_\ell$  is a free module of rank 2 over $\mathbb{K}_f\otimes \Q_{\ell}$ (see \cite[Lemma 9.5.3]{diamond2005first}) one gets a homomorphism
\begin{equation*}
    G_\Q\rightarrow \gl_2(\mathbb{K}_f\otimes \Q_{\ell})
\end{equation*}
i.e. a representation of $G_{\Q}$ into $\gl_2(\mathbb{K}_f\otimes \Q_{\ell})$. Since $\mathbb{K}_f\otimes \Q_{\ell}\simeq \prod_{\lambda \mid \ell} \mathbb{K}_{f,\lambda}$ (\cite[Chapter 9.2]{diamond2005first}) where $\mathbb{K}_{f,\lambda}$ is the completion of $\mathbb{K}_f$ at the prime $\lambda$ above $\ell$, one can then specify to each $\mathbb{K}_{f,\lambda}$ to get for each $\lambda$ the representation
\begin{equation*}
    \rho_{f,\lambda}:G_\Q\rightarrow \gl_2(\mathbb{K}_{f,\lambda}).
\end{equation*}
From each $\rho_{f,\lambda}$ one constructs representations with value in $\gl_2(\mathcal{O}_{f,\lambda})$ (see \cite[Proposition 9.9.5]{diamond2005first}) denoted still by $\rho_{f,\lambda}$. The set $\{\rho_{f,\lambda}\}$ with $\lambda$ varying over each finite place of $\mathcal{O}_f$ is called a \emph{compatible system of Galois representation}. Let $v$ be a place of $\mathbb{K}_f$ above the rational prime $p_v$ (or equivalently a prime in $\mathcal{O}_f$ above $p_v$). The representation $\rho_{f,v}$ has the following properties (see \cite[Theorem 9.6.5]{diamond2005first})
\begin{enumerate}
    \item It is continuous and unramified at primes $\ell$ that do not divide $Np_v$ (we will say that it is unramified \emph{away from $Np_v$}).
    \item For each $\ell$ not dividing $Np_v$ and Frobenius element $\frob_\ell$ at $\ell$, the characteristic polynomial of $\rho_{f,v}(\frob_\ell)$ is $X^2-a_\ell(f) X + \chi(\ell)\ell^{k-1}$.
\end{enumerate}
Recall that by the Chebotarev density theorem, for an algebraic Galois extension $\mathbb{F}/\Q$ the set of Frobenius elements is dense in $\Gal(\mathbb{F}/\Q)$ (with the Krull topology). As a consequence one can show that if two (continuous) functions defined on the Galois group agree on the set of Frobenii, then they coincide. Let $\varepsilon_\ell$ be the $\ell$-adic cyclotomic character and $\frob_\mathfrak{p}$ a Frobenius element at $\mathfrak{p}$ over $p$; since $\varepsilon_\ell(\frob_\mathfrak{p})=p$ (see \cite[Chapter 9.3]{diamond2005first}) we have that
\begin{equation*}
    \det(\rho_{f,v}(\frob_\mathfrak{p}))=\varepsilon_\ell(\frob_\mathfrak{p})^{k-1}\chi(\varepsilon_\ell(\frob_\mathfrak{p}))
\end{equation*}
that implies $\det(\rho_{f,v})=\varepsilon_\ell^{k-1}(\chi\circ \varepsilon_\ell)$ by the density remark above. One can collect all the local representations of the compatible system $\{\rho_{f,v}\}_v$ at each place into a global \say{adelic} representation by taking the product
\begin{equation*}
    \widehat{\rho}_f:= \prod_v \rho_{f,v} :G_\Q\rightarrow \prod_v\gl_2(\mathcal{O}_{f,v}) \simeq \gl_2(\widehat{\mathcal{O}}_f).
\end{equation*}
where $\widehat{\mathcal{O}}_f$ denotes the profinite completion of ${\mathcal{O}}_f$.

\section{Property (B) for morphisms}\label{sec3}
We recall now the fundamental definitions.
\begin{definition}\label{def:height}
The (logarithmic) Weil height (on $\Qb$) is the function $h:\Qb^\times\rightarrow \bb{R}_{\geq 0}$ defined as
\begin{equation*}
    h(\alpha) = \frac{1}{[K:\mathbb Q]} \sum_{v\in M_K} [K_v:\mathbb Q_v]\log\max\bigl\{|\alpha|_v,1\bigr\}
\end{equation*}
for all nonzero algebraic $\alpha$, where $K$ is any number field that contains $\alpha$, $M_K$ is the set of places of $K$ and $K_v$ the completion of $K$ with respect to the metric induced by $v$.
\end{definition}
We list some properties of the height (see \cite{MR3780026}).
\begin{proposition}\label{eqn:propaltezza}
    For $x_1,\ldots,x_n\in\Qb$ and $\alpha\in \Qb$ the Weil height satisfies the following properties
    \begin{enumerate}
    \item $h(x_1+\cdots +x_n)\leq h(x_1)+\cdots +h(x_n)+\log n$.
    \item $h(x_1\cdots x_n)\leq h(x_1)+\cdots + h(x_n)$.
    \item $h(x^m)=|m|h(x)$ for any $x\in\Qb$ and any $m\in\Z$ (so $h(1/x)=h(x)$ for all nonzero $x\in\Qb$).
    \item For a polynomial $F\in\Qb[t]$ of degree $m$ and any $x\in\Qb$, $h(F(x))=mh(x)+O_F(1)$.
    \item $h(\sigma(x))=h(x)$ for each $\sigma\in G_\bb{Q}$.
    \item For any real numbers $C_1,C_2>0$ there are only finitely many algebraic numbers $x$ with $\deg(x)\leq C_1$ and $h(x)\leq C_2$ (Northcott's theorem).
    \item We have $h(\alpha)=0$ if and only if $\alpha$ is a root of unity.
    \end{enumerate}
\end{proposition}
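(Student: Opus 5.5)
Since all seven properties are standard, the plan is to derive them directly from Definition \ref{def:height}, using the local-degree-weighted sum $\frac{1}{[K:\Q]}\sum_{v}[K_v:\Q_v]\log^+|\cdot|_v$ and the normalization $|p|_v=p^{-1}$. First we check independence of the number field $K$: for a finite extension $L/K$ one has $\sum_{w\mid v}[L_w:\Q_w]=[L:K][K_v:\Q_v]$, and $|\cdot|_w$ restricts to $|\cdot|_v$. Properties (1) and (2) then follow place by place. In the product case we use $\log^+|xy|_v\le\log^+|x|_v+\log^+|y|_v$. In the sum case we use $\log^+|x_1+\dots+x_n|_v\le\sum_i\log^+|x_i|_v+\log n$ at archimedean $v$. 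At nonarchimedean $v$ the ultrametric inequality removes the $\log n$ term. Summing with weights gives the $\log n$ at most once, since $\sum_{v\mid\infty}[K_v:\mathbb{R}]=[K:\Q]$. Property (5) follows because $\sigma$ permutes the places of $K$ and $\sigma(K)$ while preserving the local degrees.

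For (3) we first treat $m\ge0$, which is immediate from $\log^+|x^m|_v=m\log^+|x|_v$. For $m<0$ it suffices to prove $h(1/x)=h(x)$. This uses the product formula $\sum_v[K_v:\Q_v]\log|x|_v=0$ together with $\log^+|x|-\log^+|1/x|=\log|x|$.

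For (4) we apply (1) and (2) to $F(x)=\sum c_ix^i$, which gives the upper bound $h(F(x))\le mh(x)+O_F(1)$. The lower bound is the delicate step, and I expect it to be the main obstacle. I would follow the standard route via Nullstellensatz/resultants. Comparing $\max(1,|x|_v)^m$ with $\max(1,|F(x)|_v)$ at each place, one uses that $F(t)$ and $t^m$ generate an ideal containing a nonzero constant after homogenizing, with the constants depending only on $F$. This gives $\log\max(1,|x|_v)^m\le\log\max(1,|F(x)|_v)+c_v(F)$, where $c_v=0$ for all but finitely many $v$. Summing these local inequalities yields $mh(x)\le h(F(x))+O_F(1)$.

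For (6), take $x$ of degree $d\le C_1$ with minimal polynomial $P$ over $\Z$. We bound the Mahler measure by $M(P)=e^{d\,h(x)}\le e^{C_1C_2}$. The coefficients of $P$ are then bounded by $\binom{d}{j}M(P)$, which leaves only finitely many possible polynomials and hence finitely many $x$. Finally, for (7), a root of unity satisfies $|\zeta|_v=1$ everywhere, so its height is $0$. Conversely, if $h(\alpha)=0$ then by (3) every power $\alpha^n$ has height $0$ and lies in $\Q(\alpha)$. By Northcott (6) the set $\{\alpha^n\}$ is finite, so $\alpha^a=\alpha^b$ with $a\ne b$ and $\alpha$ is a root of unity.
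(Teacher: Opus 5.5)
The paper gives no proof of this proposition; it only refers to the standard literature. Your derivation follows the usual textbook route. Items (1)--(3) and (5)--(7) are fine as sketched, given the implicit $x\neq 0$ for $m<0$ in (3) and $\alpha\neq 0$ in (7), which the paper's domain $\Qb^\times$ provides. In (6), the one real input is the identity $M(P)=e^{d\,h(x)}$ for the primitive minimal polynomial, which comes from Gauss's lemma.

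The genuine problem is item (4). Your upper bound does not follow as you state it. Expanding $F(x)=\sum_{i=0}^m c_ix^i$, applying (1) to the $m+1$ summands and (2), (3) to each $c_ix^i$ only gives
\[
h(F(x))\le \sum_{i=0}^m h(c_i)+\Bigl(\sum_{i=0}^m i\Bigr)h(x)+\log(m+1)=\binom{m+1}{2}h(x)+O_F(1).
\]
This is weaker than $m\,h(x)+O_F(1)$ as soon as $m\ge 2$. The global inequalities add up the heights of all the monomials, whereas locally only the top one matters. There are two easy repairs:
\begin{itemize}
\item Work place by place: $\log^+|F(x)|_v\le m\log^+|x|_v+\log^+\bigl(c_v\max_i|c_i|_v\bigr)$, with $c_v=m+1$ for $v\mid\infty$ and $c_v=1$ otherwise.
\item Use Horner's scheme $P_m=c_m$, $P_j=c_j+xP_{j+1}$. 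Then (1) with $n=2$ and (2) give $h(P_j)\le h(P_{j+1})+h(x)+h(c_j)+\log 2$, hence $h(F(x))\le m\,h(x)+\sum_j h(c_j)+m\log 2$.
\end{itemize}

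For the lower bound, the Nullstellensatz idea is right but the ideal is misidentified. The map is $[X:Y]\mapsto[F^h(X,Y):Y^m]$ with $F^h=\sum_i c_iX^iY^{m-i}$. The forms $F^h$ and $Y^m$ have no common zero on $\mathbb{P}^1$ precisely because $c_m\neq 0$. So the ideal they generate contains $X^N$ and $Y^N$ for some $N$; a homogeneous ideal never contains a nonzero constant. This yields $\max(1,|x|_v)^m\le C_v\max(1,|F(x)|_v)$ with $C_v=1$ for almost all $v$. Equivalently, in the chart $s=1/t$, the polynomials $s^mF(1/s)$ and $s^m$ are coprime. The pair $F(t)$, $t^m$ is not the relevant one: its homogenization $F^h$, $X^m$ even has the common zero $[0:1]$ when $c_0=0$.

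In one variable you can skip the Nullstellensatz entirely:
\begin{itemize}
\item At finite $v$, set $R_v:=\max\bigl(1,\max_{i<m}|c_i/c_m|_v\bigr)$. If $|x|_v>R_v$, the leading term strictly dominates, so $|F(x)|_v=|c_m|_v|x|_v^m$.
\item At archimedean $v$, set $R_v:=\max\bigl(1,2\sum_{i<m}|c_i/c_m|_v\bigr)$. If $|x|_v>R_v$, then $|F(x)|_v\ge\frac12|c_m|_v|x|_v^m$.
\item For $|x|_v\le R_v$, the bound $m\log^+|x|_v\le m\log R_v$ is trivial.
\end{itemize}
Since $R_v=1=|c_m|_v$ for almost all $v$, summing gives $m\,h(x)\le h(F(x))+O_F(1)$. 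With these repairs your sketch is a complete proof.
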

We say that an algebraic extension $\bb{K}$ of $\Q$ has the \emph{Bogomolov property} (property $\B$ for short) if the Weil height is bounded from above by a positive constant, outside the roots of unity i.e. there exists some $C\in\bb{R}_{>0}$ such that for all $x\in\bb{K}^\times\setminus \mu(\bb{K})$ we have
\begin{equation*}
    h(x)\geq C
\end{equation*}
where $\mu(\bb{K})$ denotes the roots of unity in $\bb{K}$. The definition of property $\B$ has been applied to continuous homomorphisms of topological groups.
\begin{definition}[Definition 1.1, \cite{AT25}]
    Let $\Omega$ be a separated topological group, $\mathbb{F}$ be a subfield of $\overline{\Q}$ and $G_{\mathbb{F}}:=\Gal(\overline{\Q} / \mathbb{F})$. A continuous homomorphism
    \begin{equation*}
      \rho: G_{\mathbb{F}} \longrightarrow \Omega
    \end{equation*}
    has property $(\mathrm{B})$ if $\mathbb{F}(\rho):=\overline{\Q}^{\operatorname{ker}(\rho)}$ does.
\end{definition}
One can restate some classical results on property (B) using the language of Galois representations:
\begin{enumerate}
    \item Every homomorphism of $G_\Q$ taking values in an abelian group has property (B). This is indeed a restatement of \citeauthor{AD00} \cite{AD00} where they show that every abelian extension of $\Q$ has property (B). Taking $G_\bb{K}$ instead of $G_\Q$ with $\bb{K}$ a number field one obtains the main result of \cite{AZ10}.
    \item Let $E$ be an elliptic curve defined over $\Q$ and $T_{\ell}({E})$ is the corresponding Tate module. Consider the map
    \begin{equation*}
        \rho_{E}: G_{\Q} \longrightarrow \mathrm{GL}_2(\widehat{\mathbb{Z}})
    \end{equation*}
    which is the product of all the $\ell$-adic representations
    \begin{equation*}
        \rho_{{E}, \ell}: \Gal(\overline{\Q} / \Q) \longrightarrow \operatorname{Aut}\left(T_{\ell}({E})\right)\simeq\mathrm{GL}_2\left(\mathbb{Z}_{\ell}\right).
    \end{equation*}
    The field fixed by the kernel of each $\rho_{E,\ell}$ is $\Q(E[\ell^\infty])$ and the field fixed by $\rho_E$ is $\Q(E_{tors})$ (see \cite{AT25}). This is precisely Habegger's result \cite{Ha13}.
\end{enumerate}
\section{Main result}\label{sec4}
Fix now a normalized eigenform for the Hecke algebra $f\in S_k(\Gamma_0(N),\chi)$ of weight $k\geq 2$ with $q$-expansion $f=\sum_{n=1}^\infty a_nq^n$. For each prime $p$ denote by $\rho_{f, p}$ the representation
\begin{equation*}
    \rho_{f,p}=\prod_{v \mid p} \rho_{f,v}: G_{\mathbb{Q}} \longrightarrow \prod_{v\mid p} \gl_2(\mathcal{O}_{f,v})
\end{equation*}
and by $\rho^{(p)}_{f}$ the part \say{outside of $p$} of $\widehat{\rho}_f$ i.e.
\begin{equation*}
    \rho^{(p)}_{f}=\prod_{v \nmid p} \rho_{f,v}: G_{\mathbb{Q}} \longrightarrow \prod_{v\nmid p} \gl_2(\mathcal{O}_{f,v}).
\end{equation*}
If $f$ is CM then $\Q(\widehat{\rho}_f)$ is an abelian extension of a quadratic number field, so it has (B) by \cite{AZ10}. For this reason \emph{$f$ will always be assumed without complex multiplication}. The main result of \cite{AT25} was conditional on the existence of a prime $p$ such that the image of $\rho_{f,p}$ is 
$$\widehat{G}(\mathcal{O}_f/p\mathcal{O}_f)=\{u\in\gl_2(\mathcal{O}_f/p\mathcal{O}_f)\mid \det u\in(\bb{F}_p^{*})^{k-1}\}.$$
In particular for that prime $p$ the image of $\rho_{f,p}$ contains $\sl_2(\mathcal{O}_f/p\mathcal{O}_f)$ which is used to prove the \say{normal closure lemma} \cite[Proposition 5.1.]{AT25}. In the case of non trivial nebentypus some problems arise due to the existence of \emph{inner twists} of $f$ that complicate the image of $\rho_{f,p}$. However, as we will describe, as long as one considers a normal subgroup of finite index of $G_\Q$ corresponding to a finite Galois extension of $\Q$ where the character is trivial, the image is known and has been studied by Momose, Papier and Ribet. We give now a more detailed explanation of this and of the general results about inner twists following the exposition of \cite{Rib85}.
\subsection*{Inner twists}
Let $f$ be as before and denote by $\bb{E}$ be the field generated over $\Q$ by the coefficients of the $q$-expansion of $f$. Let $\gamma$ be an automorphism of $\bb{E}$ and consider the newform \say{twisted} by $\gamma$
\begin{equation*}
    f^\gamma=\sum_{n\geq 1} \gamma(a_n)q^n.
\end{equation*}
This is again a newform (see \cite{Ri77} end of page 21) of the same level with nebentypus $\chi^\gamma$. There might exist a Dirichlet character $\chi$ that satisfies for almost all primes $p$ the relation $\gamma(a_p)=\chi(p)a_p$. In general if $\chi$ is a Dirichlet character we write $f\otimes\chi$ for the modular form twisted by the character as
\begin{equation*}
    f\otimes\chi=\sum_{n\geq 1} \chi(n)a_nq^n.
\end{equation*}
The existence of the character $\chi_\gamma$ associated to the automorphism $\gamma$ can be rewritten as
\begin{equation}\label{eqn:tens}
    f^\gamma = f\otimes\chi_\gamma.
\end{equation}
Given $\gamma$, if this character exists it is unique (recall that in the beginning of section 4 we have assumed that $f$ is without complex multiplication) and it will be denoted by $\chi_\gamma$. Let now $\Gamma=\{\gamma\in \operatorname{Aut}(\bb{E})\mid\gamma(a_p)=\chi_\gamma(p)a_p\text { for almost all $p$}\}$ i.e. the group of automorphisms of $\bb{E}$ that admit a character $\chi_\gamma$ as above. Let $\Omega$ be the set of Dirichlet characters $\chi_\gamma$ associated to the automorphisms $\gamma\in \Gamma$.
\begin{remark}\label{rmk:neben}
    Recall that if $\chi$ is the (nontrivial) nebentypus of $f$, we have that
    \begin{equation*}
        a_p=\overline{a_p}\chi(p)
    \end{equation*}
    where $\overline{a_p}$ denotes the complex conjugation of $a_p$, for all primes $p$ not dividing $N$ (see \cite[Section 1]{Ri77}). This means that $\overline{\chi}$ (and also $\chi$) belongs in $\Omega$.
\end{remark}
\noindent Let $\bb{F}$ be the subfield of $\bb{E}$ fixed by $\Gamma$ and $\bb{K}$ be the abelian extension of $\Q$ fixed by all the Dirichlet characters associated to the twists. Except for finitely many primes the image of $G_\K$ under $\rho_{f,p}$ lies in $\gl_2(\mathcal{O}_{\bb{F}}\otimes_\Z \Z_p)$ \cite[Theorem 3.1]{Rib85} and is exactly the group 
\begin{equation}\label{eqn:gathe}
    \widehat{G}(\mathcal{O}_{\bb{F}}\otimes_\Z \Z_p):=\{\gamma\in\gl_2(\mathcal{O}_{\bb{F}}\otimes_\Z \Z_p)\mid \det(\gamma)\in(\Z_p^\times)^{k-1})\}.
\end{equation}
In particular letting $\mathcal{A}=\mathcal{O}_f\otimes_\Z \Z_p$, the image of $\rho_{f,p}$ mod $p$ contains $\sl_2(\mathcal{A}/p\mathcal{A})$. This is analogous to what happens in \cite[Remark 2.2]{AT25} where $\det(\rho(\frob_\ell))=\ell^{k-1}$ for all $\ell\nmid Np$ since the nebentypus is trivial.
\begin{remark}
    As written above if $\gamma\in\Gal(\bb{E}/\bb{Q})$ then $f^\gamma$ is again a newform of the same level. If $\gamma$ is an inner twist then $f^\gamma=f\otimes \chi_\gamma$ and for all places $v$ we have
    \begin{equation}\label{eqn:reptwist}
        {\rho}_{f^\gamma,v}\simeq {\rho}_{f,v}\otimes \chi_\gamma.
    \end{equation}
    If $v$ is above $\ell\nmid Np$ the representation at $v$ is unramified at $p$. Since both $\rho_{f^\gamma,v}(I_p)$ and $\rho_{f,v}(I_p)$ are trivial, by \eqref{eqn:reptwist} we have that $\chi_\gamma(I_p)$ is also trivial i.e. $\chi_\gamma$ is unramified at $p$. The field $\bb{K}$ fixed by all the characters associated to the twists is therefore unramified at $p$.
\end{remark}
\noindent We can now state the main theorem of our paper. Assume now that $f$ and $p$ satisfy the following
\begin{assumption}\label{ass1}
    Let $\mathcal{A}=\mathcal{O}_f\otimes_\Z \Z_p$.
    \begin{itemize}
        \item[(P0)] $p \nmid N$;
        \item[(P1)] $a_p=0$;
        \item[(P2)] The image of $\rho_{f,p}$ mod $p$ contains $\sl_2(\mathcal{A}/p\mathcal{A})$.
        \item[(P3)] $p \geq 5,\  p \nmid k-1$ and $\frac{p+1}{2} \nmid k-1$.
    \end{itemize}
\end{assumption}
Our main theorem is
\begin{theorem}\label{main:thm}
Under the above conditions the representation
    \begin{equation*}
        \widehat{\rho}_f:G_\Q\rightarrow \gl_2(\widehat{\mathcal{O}}_f).
    \end{equation*}
    has property (B).
\end{theorem}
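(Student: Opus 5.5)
The plan follows the strategy of \cite{AT25}, adapted to the inner-twist setting. Write $\Q(\widehat{\rho}_f)=\bb{F}\bb{L}$, where $\bb{F}=\Q(\rho^{(p)}_f)$ is the field cut out by the part outside of $p$ and $\bb{L}=\Q(\rho_{f,p})$. Rather than working over $\Q$, I would pass to a finite Galois extension $\bb{K}'$ of $\Q$ containing the field $\bb{K}$ fixed by the inner-twist characters and the field cut out by the nebentypus $\chi$, and chosen unramified at $p$. By the Remark above, $\bb{K}$ is unramified at $p$, and so is $\chi$ since $p\nmid N$. Since property (B) passes to subfields, it suffices to show that $\bb{K}'\bb{F}\bb{L}$ has (B). Over $G_{\bb{K}'}$ the determinant is $\varepsilon_p^{k-1}$, and by \cite[Theorem 3.1]{Rib85} the image of $\rho_{f,p}$ lies in $\widehat{G}(\mathcal{O}_{\bb{F}}\otimes\Z_p)$ as in \eqref{eqn:gathe}. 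The proof then has three steps, matching the structure announced in the outline.

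\textbf{Step 1: the part outside $p$.} I would show that $\bb{K}'\bb{F}$ is unramified at a prime $\mathfrak{p}\mid p$ of $\bb{K}'$, and that it is ADZ at $\mathfrak{p}$, i.e.\ the fixed field of the center of its Galois group has bounded local degree at $\mathfrak{p}$.
\begin{itemize}
\item Unramifiedness holds because each $\rho_{f,v}$ with $v\nmid p$ is unramified at $p$.
\item For the ADZ property, the key observation is that $\rho_{f,v}(\frob_{\mathfrak p})$ has characteristic polynomial $X^2+\chi(p)p^{k-1}$, since $a_p=0$ by (P1). Hence $\rho_{f,v}(\frob_{\mathfrak p})^2$ is the scalar $-\chi(p)p^{k-1}$, and some bounded power of the Frobenius is central in the image of every $\rho_{f,v}$, uniformly in $v$.
\item Taking products and using Lemma \ref{lemma:centro} and Theorem \ref{thm:compostoadz}, the same power of the Frobenius is central in the whole outside-$p$ image. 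So the local degree at $\mathfrak{p}$ of the fixed field of the center is uniformly bounded, and Proposition \ref{prop:adz} gives (B) for $\bb{K}'\bb{F}$ via \cite[Theorem 1.5]{ADZ14}.
\end{itemize}

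\textbf{Step 2: the $p$-part.} For $p\nmid N$, $a_p=0$ and weight $k$, results of Breuil and Scholl identify $\rho_{f,p}|_{G_{\Q_p}}$, after restriction to the inertia/unramified twist, with the crystalline representation $V_{k,0,\omega}$. This gives an explicit description of the local extension cut out by $\rho_{f,p}$ and by each reduction modulo $p^n$. Condition (P3) ensures the needed ramification estimates: the tame part of inertia acts through characters of level two of order divisible enough, so that the fields $\bb{L}_n$ have the required ramification groups and Frobenius acts as required. I would then check Assumptions \ref{ass1} and \ref{ass2} for $\bb{L}$ over $\bb{K}'$, following \cite{AT25}. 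In particular, I would verify that the Galois group of $\bb{L}_{n+1}/\bb{L}_n$ is abelian of exponent $p$ and is acted on appropriately by $\frob_{\mathfrak p}$.

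\textbf{Step 3: merging.} I would apply Theorem \ref{thm:composto} with base $\bb{K}'$, taking $\bb{F}$ to be $\bb{K}'\bb{F}$ (unramified at $\mathfrak p$, with (B) by Step 1) and $\bb{L}$ as in Step 2. This concludes that $\bb{K}'\bb{F}\bb{L}\supseteq\Q(\widehat{\rho}_f)$ has (B). One hypothesis of Theorem \ref{thm:composto} requires the normal closure lemma, Proposition \ref{prop:ncl}: one needs that $\Gal(\bb{K}'\bb{F}\bb{L}_{n+1}/\bb{K}'\bb{F}\bb{L}_n)$ is large, i.e.\ that $\bb{F}$ and $\bb{L}$ are sufficiently disjoint. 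This is where (P2) enters. Because the image mod $p$ contains $\sl_2(\mathcal{A}/p\mathcal{A})$, Goursat-type arguments with its perfectness (using $p\ge5$) show that the $\sl_2$-part of the $p$-adic image cannot be absorbed by the solvable or prime-to-$p$ contributions coming from $\bb{F}$.

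I expect this normal closure lemma to be the main obstacle. The ring $\mathcal{A}$ is now a product of possibly ramified extensions of $\Z_p$, and the determinant has to be controlled through $\omega$. My first attempt would be to show that any normal subgroup of $\widehat{G}(\mathcal{A})$ whose quotient is a quotient of $\Gal(\bb{K}'\bb{F}/\bb{K}')$ must contain $\sl_2(\mathcal{A})$, via a lifting argument from mod $p$ to mod $p^n$ using the commutator structure of $\sl_2$ over $\mathcal{A}/p^n\mathcal{A}$.
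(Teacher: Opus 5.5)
Your architecture matches the paper's. You base change to the inner-twist field $\bb{K}$, which is unramified at $p$ and kills $\chi$. You use ADZ for the part outside $p$ because $\rho^{(p)}_f(\frob_p)^2$ is scalar, which is exactly Proposition \ref{prop:adz}. You use Breuil--Scholl for the $p$-part and Theorem \ref{thm:composto} to merge. The gap is that you misidentify what the normal closure lemma must prove, and that lemma is the heart of the argument.

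\textbf{The normal closure lemma is not a disjointness statement.} Theorem \ref{thm:composto} needs no disjointness between $\bb{F}$ and $\bb{L}$. Since $\bb{F}_0$ is Galois over the base and unramified at $\mathfrak{p}$, every conjugate of the ramification group $H_n'$ acts trivially on $\bb{F}_0$. Hence $\overline{H_n'}\subseteq\Gal(\bb{F}_0\bb{L}_n/\bb{F}_0)$ restricts isomorphically onto $\overline{H_n}$, and $(\bb{F}_0\bb{L}_n)^{\overline{H_n'}}=\bb{F}_0\bb{L}_{n-1}$ follows formally from $\bb{L}_n^{\overline{H_n}}=\bb{L}_{n-1}$. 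Your Goursat-type statement about normal subgroups of $\widehat{G}(\mathcal{A})$ with quotients coming from $\Gal(\bb{K}'\bb{F}/\bb{K}')$ is therefore beside the point.

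It also does not yield what Assumption \ref{ass3} requires for $\bb{L}_n=\bb{K}(p^n)$, which concerns $\bb{L}$ alone. The normal closure in $\Gal(\bb{K}(p^n)/\bb{K})$ of the \emph{local} last ramification group $\Gal(L(p^n)/L(p^{n-1}))$ must be all of $\Gal(\bb{K}(p^n)/\bb{K}(p^{n-1}))$. The difficulty is that this local group is tiny: cyclic of order $\delta=\frac{q-1}{(q-1,k-1)}$ for $n=1$, and of order $p^2$ for $n\geq 2$. One must show that its conjugates under the global image generate $\widehat{G}(\mathcal{A}/p\mathcal{A})$, resp.\ the whole kernel of reduction mod $p^{n-1}$.

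The paper does this in three steps, none of which appears in your plan.
\begin{itemize}
\item It first descends the local representation over $L=\K_\mathfrak{p}\Q_q$ to $\gl_2(\Z_p)$, via Lemma \ref{lemma:fiss} and Proposition \ref{prop:car}; the latter uses absolute irreducibility of $V_{k,0}$.
\item For $n=1$ it applies \cite[Proposition A.1.1]{AT25} to a subgroup of $\gl_2(\mathbb{F}_p)$ of order $\delta>2s$, where $s=\frac{p-1}{(p-1,k-1)}$; this is where $\frac{p+1}{2}\nmid k-1$ is used. The determinant of that subgroup, $\varepsilon_p^{k-1}$, is onto $(\mathbb{F}_p^\times)^{k-1}$.
\item For $n\geq 2$ it uses the injective, conjugation-equivariant log map $\mathcal{L}_n$ into $\widehat{M}_2(\mathcal{A}/p\mathcal{A})$ (see \eqref{eqn:conj}). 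The local image in $M_2(\mathbb{F}_p)$ has order $p^2>p$, so it meets $M_2(\mathbb{F}_p)^0$ nontrivially. Then (P2) together with \cite[Proposition A.2.5]{AT25} gives all of $M_2(\mathcal{A}/p\mathcal{A})^0$. Finally $\det\rho_{f,v\mid G_L}=N_{\Q_q/\Q_p}^{k-1}$ with $p\nmid k-1$ gives the scalars.
\end{itemize}

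\textbf{The central element is never constructed.} You propose to check Assumptions \ref{ass1} and \ref{ass2} for $\bb{L}$, but Assumption \ref{ass1} is the hypothesis on $(f,p)$; Theorem \ref{thm:composto} needs Assumptions \ref{ass2} and \ref{ass3}. For Assumption \ref{ass2} you need $\tau$ in an inertia group at $\mathfrak{p}$ whose restriction to $\bb{L}$ is central and with $\varepsilon_p(\tau)\in\Z_{>1}$. The paper obtains it from Proposition \ref{Prop:gal}(iii):
\begin{itemize}
\item On inertia $\rho_{f,v}\simeq\varepsilon_2^{k-1}\oplus\varepsilon_2^{\prime\,k-1}$, and the two characters agree on the inertia elements corresponding to $\Z_p^\times\subset\Z_q^\times$. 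So the local image contains the scalar $2^{k-1}I$.
\item Total ramification of $L(p^\infty)/L$ lets you lift this to inertia. It is scalar, hence central in the global image.
\item Since $\det=\varepsilon_p^{k-1}$ on $G_\bb{K}$, one gets $\varepsilon_p(\tau)=4\zeta$ with $\zeta$ a root of unity of $\Z_p^\times$. Replacing $\tau$ with $\tau^{p-1}$ removes $\zeta$.
\end{itemize}
Likewise, \emph{abelian of exponent $p$ with a suitable Frobenius action} is not what Assumption \ref{ass3} asks for. It asks for $e_n/(i_n+1)\le C_1$ and $|\overline{H_n}|\le C_2$, together with the normal closure condition above. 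The two bounds come from the explicit ramification filtration of Proposition \ref{Prop:gal}, i.e.\ the character $\varepsilon_2^{k-1}$ over the unramified field $L$.
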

This theorem improves the main result of \cite{AT25} where one assumes that $f$ has trivial nebentypus. We now briefly discuss \hyperref[ass1]{Assumption 1}. As was noted in the paragraph about the Deligne-Shimura construction, the characteristic polynomial of a Frobenius element at a prime above $p$ \emph{that does not divide the level $N$} has characteristic polynomial $X^2-a_p(f)X+\chi(p)p^{k-1}$. This motivates (P0) as we have control over the Frobenius elements over the primes outside of $N$. The Assumption (P1) is motivated by the fact that if $a_p(f)=0$ and $v$ a place above $p$, the representation $\rho_{f,v}$ restricted to $G_{\Q_p}$ is crystalline according to \cite{scholl1990motives}, so one can use the classification of Breuil of crystalline representations \cite[Proposition 3.1.1]{Bre03} and conclude that it is of type $V_{k,a_p,\omega}$. Moreover this classification states the representation is induced precisely when $a_p=0$ \cite[Proposition 3.1.2]{Bre03}. Finally (P1) is used in Proposition \ref{prop:adz} to show, by looking at its characteristic polynomial, that the image of the square of a Frobenius element at $p$ under $\rho_f^{(p)}$ is scalar. In the context of elliptic curves as in \cite{Ha13} this assumption was the existence of a supersingular prime for an elliptic curve $E$ defined over $\Q$. This is guaranteed by a theorem of Elkies \cite{MR903384} that states that for a rational elliptic curve there are actually infinitely many supersingular primes. Elkies improved his theorem to the case of number fields with at least one real embedding (see \cite{MR1030140}) but for a general number field the result is unknown. The assumption on supersingular primes was used by Habegger in combination with an open image theorem of Serre \cite{SER72} to guarantee the existence of infinitely many supersingular primes for $E$ such that the representation
\begin{equation*}
    G_\Q\rightarrow \operatorname{Aut}(E[p])
\end{equation*}
is surjective. The Assumption (P2) has been already discussed in the paragraph about inner twists in section 4. Let us mention that it is the analogue of Serre's theorem in our context of representations associated to modular forms and as in \cite{AT25} it shall be heavily used in the normal closure Lemma \ref{prop:ncl}. The last Assumption (P3) is also used in the proof of the normal closure lemma and to guarantee that the representation $V_{k,0,\omega}$ is absolutely irreducible.
\section{Proof of the main result}\label{sec5}
From now on we fix the prime $p$ of assumption \ref{ass1} and an embedding $\overline{\Q}\hookrightarrow\overline{\Q}_p$. The proof of Theorem \ref{main:thm} is structured in the following way. First one proves that the representation $\rho_f^{(p)}$ has (B). This is the part \say{outside of $p$} of $\widehat{\rho}_f$ and to show (B) one uses the characteristic polynomial to prove that the square of a Frobenius element is central. After that one deals with the part \say{inside of $p$} i.e. the $p$-adic representation $\rho_{f,p}$. This is more challenging and one has to use the fact that $\rho_{f,p}$ is a crystalline representation by a classification due to Breuil. Finally one merges the two representations using Theorem \ref{thm:composto} which is a slight generalization of \cite[Proposition 3.4]{AT25} allowing the base field to be a finite extension of $\Q$.
We give an explanation of the proof of the \say{merging} Theorem \ref{thm:composto}. Let $\bb{K}$ be a number field and $\bb{L}/\bb{K}$ a infinite Galois extension. Any element in $\bb{L}$ is contained in a finite Galois extension of $\bb{K}$, so if we suppose that $\bb{L}$ is a union of finite Galois extensions $\bb{L}_n$ such that the height in each $\bb{L}_n$ is bounded from below by a constant that does not depend on $n$, then that bound will also work for the whole field $\bb{L}$. The problem is then to give sufficient conditions on each $\bb{L}_n$ to have a uniform lower bound. The starting point is the following slight generalization of \cite[Lemma 2.2]{ADZ14}.
\begin{lemma}\label{lemma:adzs}
    Let $\bb{L}/\bb{K}$ be a finite Galois extension of a number field $\bb{K}$ and let $\sigma\in \Gal(\bb{L}/\bb{K})$. Let $a$, $b\geq1$  be rational integers, $\rho\in\bb{R}_{>0}$ and $S$ a set of places of $\bb{L}$ above a fixed prime $\mathfrak{p}$ of $\bb{K}$. If for all $\gamma\in\mathcal{O}_\bb{L}$ and $v\in S$ we have
    \begin{equation*}
        \vert\gamma^a-\sigma(\gamma)^b\vert_v\leq p^{-\rho}\;
    \end{equation*}
    then for every $\alpha\in  \bb{L}$ such that $\alpha^a\neq\sigma(\alpha)^b$ we have
    \begin{equation}\label{eq:stima}
        h(\alpha)\geq\frac{1}{a+b}\left(|S|\frac{[\bb{L}_{v}:\bb{Q}_p]}{[\bb{L}:\bb{Q}]}\rho\log p-\log 2\right).
    \end{equation}
    where $v$ is any place in $S$.
\end{lemma}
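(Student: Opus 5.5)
The plan is to follow the argument of \cite[Lemma 2.2]{ADZ14}: apply the product formula to the nonzero algebraic number $\beta:=\alpha^a-\sigma(\alpha)^b\in\bb{L}$, so that $\sum_{w}[\bb{L}_w:\Q_{p_w}]\log|\beta|_w=0$ over all places $w$ of $\bb{L}$. We bound $|\beta|_w$ from above at every place and sum. Writing $h(\beta)$ in terms of local contributions, we get $0\leq$ (contribution of the places in $S$) $+$ (contribution of all other places), and the lower bound for $h(\alpha)$ comes out of comparing the two.

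The first step handles the places in $S$, where the hypothesis is stated only for integers. Given $\alpha\in\bb{L}$, I would not assume $\alpha$ is integral. Instead, at $v\in S$ I would split into the cases $|\alpha|_v\leq 1$ and $|\alpha|_v>1$. Note that $\sigma$ permutes the places above $\mathfrak p$, so $|\sigma(\alpha)|_v=|\alpha|_{\sigma^{-1}v}$. For $|\alpha|_v\le1$ and $|\sigma\alpha|_v\le 1$, I would approximate $\alpha$ $v$-adically by an element $\gamma\in\mathcal{O}_\bb{L}$ using weak approximation and the density of $\mathcal{O}_\bb{L}$ in $\mathcal{O}_{\bb{L}_v}$ (simultaneously at $v$ and $\sigma^{-1}v$). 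Then the ultrametric inequality transfers the bound: $|\alpha^a-\sigma(\alpha)^b|_v\le p^{-\rho}$. In general I expect the clean statement to be
\[
|\beta|_v\le p^{-\rho}\max\{1,|\alpha|_v\}^a\max\{1,|\sigma\alpha|_v\}^b .
\]
I would obtain this by the same approximation, after scaling by a suitable power of a $v$-adic uniformizer, or more simply by noting that in the non-integral case the trivial bound $\max\{1,|\alpha|_v\}^a\max\{1,|\sigma\alpha|_v\}^b$ already suffices up to the factor $p^{-\rho}$. Making this last point honest is where care is needed, and I regard it as the main technical obstacle. If it resists, I would reduce to the integral case by replacing $\alpha$ with $m\alpha$ for a suitable rational integer $m$ and tracking the change in height, or I would restrict to the situation actually used later.

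At all other places $w$ the bound is the trivial one, $|\beta|_w\le c_w\max\{1,|\alpha|_w\}^a\max\{1,|\sigma\alpha|_w\}^b$, with $c_w=2$ at archimedean $w$ and $c_w=1$ otherwise. Now take $\frac{1}{[\bb{L}:\Q]}\sum_w[\bb{L}_w:\Q_{p_w}]\log$ of all these bounds. The product formula makes the left side $0$. On the right, the archimedean constants contribute at most $\log 2$, since the local degrees sum to $[\bb{L}:\Q]$. The $\max$ terms contribute $a\,h(\alpha)+b\,h(\sigma\alpha)=(a+b)h(\alpha)$ by Proposition \ref{eqn:propaltezza}(5). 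The places in $S$ contribute $-\frac{1}{[\bb{L}:\Q]}\sum_{v\in S}[\bb{L}_v:\Q_p]\rho\log p$.

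Finally, since $\bb{L}/\bb{K}$ is Galois, all places of $\bb{L}$ above $\mathfrak p$ have the same local degree $[\bb{L}_v:\Q_p]$. The sum over $S$ is therefore $|S|[\bb{L}_v:\Q_p]\rho\log p$ for any $v\in S$. Rearranging $0\le(a+b)h(\alpha)+\log2-|S|\frac{[\bb{L}_v:\Q_p]}{[\bb{L}:\Q]}\rho\log p$ gives exactly \eqref{eq:stima}.
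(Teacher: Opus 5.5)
Your global bookkeeping matches the paper: the product formula, the $\log 2$ from the archimedean places, $ah(\alpha)+bh(\sigma\alpha)=(a+b)h(\alpha)$, and the constancy of $[\bb{L}_v:\Q_p]$ over places above $\mathfrak p$. Your approximation argument is also correct when $\alpha$ is integral at $v$ and at $\sigma^{-1}v$. There is, however, a genuine gap. The local estimate
\[
|\alpha^a-\sigma(\alpha)^b|_v\le p^{-\rho}\max\{1,|\alpha|_v\}^a\max\{1,|\sigma\alpha|_v\}^b,\qquad v\in S,
\]
is never proved for non-integral $\alpha$, and this is the only step where the hypothesis enters.

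Your ``more simply'' route is false. The ultrametric inequality gives only $\max\{|\alpha|_v^a,|\sigma\alpha|_v^b\}$, with no factor $p^{-\rho}$. For example, take $a=b$, $\sigma v=v$ and $1<|\alpha|_v<p^{\rho/a}$: the trivial bound $|\alpha|_v^a$ is strictly weaker than the needed $p^{-\rho}|\alpha|_v^{2a}$. You also cannot drop $p^{-\rho}$ at such $v$, since the $S$-terms are the entire source of the lower bound. The fallback of replacing $\alpha$ by $m\alpha$ fails too: $h(m\alpha)$ and $h(\alpha)$ can differ by up to $\log|m|$, which is not bounded independently of $\alpha$.

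The missing idea, which your ``scaling'' remark gestures at but does not carry out, is to apply the hypothesis to the scaling factor as well. By \cite[Lemma 1]{AD00} (essentially CRT), choose $\beta\in\mathcal{O}_{\bb{L}}$ with $\alpha\beta\in\mathcal{O}_{\bb{L}}$ and $|\beta|_v=\max\{1,|\alpha|_v\}^{-1}$. The hypothesis then applies to both $\beta$ and $\alpha\beta$, and we have the identity
\[
\beta^a\bigl(\alpha^a-\sigma(\alpha)^b\bigr)=\bigl((\alpha\beta)^a-\sigma(\alpha\beta)^b\bigr)+\sigma(\alpha)^b\bigl(\sigma(\beta)^b-\beta^a\bigr).
\]
Combined with the ultrametric inequality, this gives $|\beta|_v^a\,|\alpha^a-\sigma(\alpha)^b|_v\le p^{-\rho}\max\{1,|\sigma\alpha|_v\}^b$. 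That is exactly the needed estimate, obtained uniformly and with no case split or approximation step. This is the paper's argument; with it in place, the rest of your proof goes through as written.
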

\begin{proof}
     Let $\alpha\in \bb{L}$ and $v\in S$. By \cite[Lemma 1]{AD00} there exists an integer $\beta\in  \mathcal{O}_\bb{L}$ such that $\alpha\beta$ is  an integer and
    \begin{equation*}
        \vert\beta\vert_v=\max\{1,\vert\alpha\vert_v\}^{-1}.
    \end{equation*}
    Then we have $\vert(\alpha\beta)^a-\sigma(\alpha\beta)^b\vert_v\leq p^{-\rho}$ and $\vert\beta^a-\sigma(\beta)^b\vert_v\leq p^{-\rho}$ by our assumptions. Using the ultrametric inequality, we deduce that
    \begin{align*}
        \vert\alpha^a-\sigma(\alpha)^b\vert_v
        &=\vert\beta\vert_v^{-a}
        \vert (\alpha\beta)^a-\sigma(\alpha\beta)^b+
        (\sigma(\beta)^b-\beta^a)\sigma(\alpha)^b\vert_v\\
        &\leq 
        c(v)\max(1,\vert\alpha\vert_v)^a\max(1,\vert\sigma(\alpha)\vert_v)^b
    \end{align*}
    with $c(v)=p^{-\rho}$ for all $v\in S$. Moreover the last inequality holds for an arbitrary place $w$ of $\bb{L}$ with 
    \begin{equation*}
        c(w)=
        \begin{cases}
        1&\text{ if } w\nmid\infty, w\notin S\\
        2&\text{ if } w\mid \infty.
        \end{cases}
    \end{equation*}
    Since $\alpha^a-\sigma(\alpha)^b$ is not zero, by applying the product formula we get:
    \begin{align*}
    0 & = \sum_v\frac{[\bb{L}_v:\Q_v]}{[\bb{L}:\Q]}\log\vert \alpha^a-\sigma(\alpha)^b\vert_v\\
        & \leq \sum_v\frac{[\bb{L}_v:\Q_v]}{[\bb{L}:\Q]}\left(\log c(v)+a\log\max\{1,\vert\alpha\vert_v\}+b\log\max\{1,\vert\sigma(\alpha)\vert_v\}\right)\\
        & = \left(\sum_{v\mid\infty}\frac{[\bb{L}_v:\Q_v]}{[\bb{L}:\Q]}\right)\log 2-\left(\sum_{v\in S}\frac{[\bb{L}_v:\Q_p]}{[\bb{L}:\Q]}\right)\rho\log p + a h(\alpha)+bh(\sigma(\alpha))\\
        & =\log 2+(a+b)h(\alpha)-\rho\log p\sum_{v\in S}\frac{[\bb{L}_v:\bb{K}_\mathfrak{p}][\bb{K}_\mathfrak{p}:\bb{Q}_p]}{[\bb{L}:\Q]}.
    \end{align*}
    Since $\bb{L}/\bb{K}$ is Galois $[\bb{L}_v:\bb{K}_\mathfrak{p}]$ is constant for all $v$ above $\mathfrak{p}$ so
    \begin{equation*}
        \sum_{v\in S}\frac{[\bb{L}_v:\bb{K}_\mathfrak{p}][\bb{K}_\mathfrak{p}:\bb{Q}_p]}{[\bb{L}:\Q]}=\frac{|S|[\bb{L}_v:\bb{K}_\mathfrak{p}][\bb{K}_\mathfrak{p}:\bb{Q}_p]}{[\bb{L}:\Q]}=\frac{|S|[\bb{L}_v:\bb{Q}_p]}{[\bb{L}:\bb{Q}]}.
    \end{equation*}
    We have then the bound
\begin{align*}
    h(\alpha)&\geq \frac{1}{a+b}\left(\rho\frac{|S|[\bb{L}_v:\bb{K}_\mathfrak{p}][\bb{K}_\mathfrak{p}:\bb{Q}_p]}{[\bb{L}:\Q]}\log p-\log 2\right)\\
    &=\frac{1}{a+b}\left(|S|\frac{[\bb{L}_{v}:\bb{Q}_p]}{[\bb{L}:\bb{Q}]}\rho\log p-\log 2\right).
\end{align*}
\end{proof}
As a corollary we recover \cite[Lemma 2.2]{ADZ14} by choosing $S$ to be the set of all places of $\bb{L}$ above $\mathfrak{p}$. Note that the previous lemma is a bound only for the special elements $\alpha$ such that $\alpha^a\neq \sigma(\alpha)^b$ because otherwise one cannot apply the product formula. This will be circumvented using assumption \ref{ass2}.
Let us now come back to the previous setting of the infinite extension $\bb{L}/\bb{K}$. We want to write $\bb{L}$ as a union of finite Galois extensions $\bb{L}_n$ of $\bb{K}$ with controlled height so that the union $\bb{L}$ will have (B). Fix now an $\bb{L}_n$ and suppose that there exists a $\sigma_n\in \Gal(\bb{L}_n/\bb{K})$ such that for all $\gamma\in \mathcal{O}_{\bb{L}_n}$ and all places $v_n$ of $\bb{L}_n$ over a fixed prime $\mathfrak{p}$ of $\bb{K}$ we have 
\begin{equation}\label{eqn:stima}
    |\gamma^a-\sigma_n(\gamma)^b|_{v_n}\leq p^{-{\rho_n}}
\end{equation}
for some positive integers $a,b$ and $\rho_n\in\bb{R}_{>0}$. Putting $\bb{L}_n$ in \eqref{eq:stima} with $S$ the set of all places $v_n$ of $\bb{L}_n$ over $\mathfrak{p}$ we have for each $\alpha\in\bb{L}_n$ the bound
\begin{equation*}
    h(\alpha)\geq \frac{1}{a+b}\left(\frac{[\bb{K}_\mathfrak{p}:\bb{Q}_p]}{[\bb{K}:\bb{Q}]}\rho_n\log p-\log 2\right).
\end{equation*}
holds for all $\alpha$ such that $\sigma_n(\alpha)^b\neq \alpha^a$. To have a bound on every element of $\bb{L}_n$ we need to produce for each $\alpha\in\bb{L}_n$ an element $\beta\in\bb{L}_n$ such that $h(\alpha)\geq C h(\beta)$, for some constant $C>0$ independent of $\alpha$, and such that $\beta^a\neq \sigma_n(\beta)^b$. For this we will choose $\beta=\tau(\alpha)/\alpha^{g}$ for some special automorphism $\tau\in\Gal(\bb{L}/\bb{K})$ and integer $g>1$. The element $\beta$ will also have the property that $\beta^\prime/\beta$ will never be a $p^\infty$-root of unity for each conjugate $\beta^\prime$ of $\beta$ different from $\beta$.  Lemma \ref{lemma:adzs} provides a bound on the height of $\beta$ (and therefore on $\alpha$) that depends on $n$. To eliminate this dependency first notice that if we put $a=b=1$, equation \eqref{eqn:stima} says that $\sigma_n$ belongs to some ramification group for the places $v_n$ of $\bb{L}_n$ above $\mathfrak{p}$; moreover the condition $\beta^a\neq\sigma_n(\beta)^b$ becomes $\sigma_n(\beta)\neq\beta$. To guarantee the existence of such $\sigma_n$ we can choose each $\bb{L}_n$ to be the field fixed by the normal closure of some ramification group inside $\Gal(\bb{L}_{n+1}/\bb{K})$. Note that by choosing the normal closure, each $\bb{L}_n$ will be a Galois extension of $\bb{K}$. Taking $\sigma_n$ in the last nontrivial ramification group $H_n$ of $\bb{L}_n/\bb{K}$ with index $i_n$ we have the bound
\begin{equation*}
    |\sigma_n(\gamma) - \gamma|_v \leq p^{-\frac{(i_n + 1)}{e(\mathfrak{p}|p)e_n}}
\end{equation*}
where $e(\mathfrak{p}|p)$ and $e_n$ are the ramification indices of $p$ in $\bb{K}$ and of $\mathfrak{p}$ in $\bb{L}_n$. If the quantities ${(i_n + 1)}/{e_n}$ and $|\overline{H_n}|$ are bounded by constants independent of $n$ we can use the following \say{acceleration} lemma (this is why we need that $\beta^\prime/\beta$ is never a $p^\infty$-root of unity).
\begin{lemma}[Lemma 2.1, \cite{ADZ14}]
    Let $\bb{K}$ be a number field, $v$ a finite place of $\bb{K}$ over a rational prime $p$ and $\rho>0$. Let $\gamma_1,\gamma_2\in\mathcal{O}_{\bb{K}}$ be such that $|\gamma_1-\gamma_2|_v\leq p^{-\rho}$. Then for any non-negative integer $\lambda$ we have $|\gamma_1^{p^\lambda}-\gamma_2^{p^\lambda}|_v\leq p^{-s_{p,\rho}(\lambda)}$ with $s_{p,\rho}(\lambda)\rightarrow +\infty$ as $\lambda\rightarrow +\infty$.
\end{lemma}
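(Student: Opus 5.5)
The natural route is induction on $\lambda$ via the binomial theorem. It suffices to handle $\lambda=1$ with an explicit improvement: I would show that if $|\gamma_1-\gamma_2|_v\leq p^{-\rho}$ then $|\gamma_1^p-\gamma_2^p|_v\leq p^{-\phi(\rho)}$, where
\begin{equation*}
    \phi(\rho)=\min\{\rho+1,\ p\rho\}.
\end{equation*}
Iterating then gives $s_{p,\rho}(\lambda)=\phi^{\circ\lambda}(\rho)$.

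For the one-step bound, write $\gamma_1=\gamma_2+\delta$ with $|\delta|_v\leq p^{-\rho}$ and expand
\begin{equation*}
    \gamma_1^p-\gamma_2^p=\sum_{j=1}^{p}\binom{p}{j}\gamma_2^{p-j}\delta^j.
\end{equation*}
Since $\gamma_2\in\mathcal{O}_{\bb{K}}$, we have $|\gamma_2|_v\leq 1$. For $1\leq j\leq p-1$ the prime $p$ divides $\binom{p}{j}$, so with our normalization $|p|_v=p^{-1}$ each such term has absolute value at most $p^{-1}|\delta|_v\leq p^{-(\rho+1)}$. The last term satisfies $|\delta^p|_v\leq p^{-p\rho}$. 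The ultrametric inequality then gives the claim with $\phi(\rho)=\min\{\rho+1,p\rho\}$.

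For the induction, the elements $\gamma_1^{p^{\lambda}},\gamma_2^{p^\lambda}$ are again in $\mathcal{O}_{\bb{K}}$, so the one-step bound applies repeatedly. Define $s(0)=\rho$ and $s(\lambda+1)=\min\{s(\lambda)+1,\ p\,s(\lambda)\}$. It remains to check divergence. Since $\rho>0$ and $p\geq 2$, each step increases $s$ by at least $\min\{1,(p-1)s(\lambda)\}\geq\min\{1,(p-1)\rho\}>0$, because $s(\lambda)$ is nondecreasing. Hence $s(\lambda)\geq\rho+\lambda\min\{1,(p-1)\rho\}\to+\infty$, and in fact $s(\lambda)$ grows linearly in $\lambda$.

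There is no real obstacle here; the only care needed is the normalization. One must use $|p|_v=p^{-1}$, which matches the absolute value $|\cdot|_v$ fixed in the Notations, and not the $\|\cdot\|_v$ convention. One also needs integrality of the $\gamma_i$ to bound the mixed terms by $1$. Note that the bound is independent of the place and of the degree of $\bb{K}$, which is what makes it usable uniformly in the merging argument.
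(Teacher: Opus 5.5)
Your proof is correct. The one-step binomial estimate $|\gamma_1^p-\gamma_2^p|_v\le p^{-\min\{\rho+1,\,p\rho\}}$, which uses $|p|_v=p^{-1}$ and the integrality of the $\gamma_i$, iterated by induction on $\lambda$, gives an exponent that grows at least linearly in $\lambda$ and depends only on $p$ and $\rho$. The paper does not reprove this lemma but quotes it from \cite{ADZ14}, where, to the best of my recollection, it is proved by this same binomial-theorem induction, so your proposal should follow essentially the same route.
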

We find then a big enough positive $\lambda$ such that
\begin{equation*}
    |\sigma_n(\gamma^{p^\lambda}) - \gamma^{p^\lambda}|_v \leq p^{-|\overline{H_n}|}
\end{equation*}
for all $n$ at once. Taking $\phi\in\Gal(\bb{L}_n/\bb{Q})$, since $|\phi^{-1}(x)|_v=|x|_{\phi(v)}$ the previous bound will work for all the places in the orbit of $v$ under the action of any $\phi$ commuting with $\sigma_n$. Choosing $S$ as the orbit of this action and bounding its size, we apply Lemma \ref{lemma:adzs} to deduce
\begin{equation*}
    h(\beta) \geq \frac{\log(p/2)}{2p^{\lambda}}.
\end{equation*}
\subsection{Hypotheses and proof of the merging theorem}
We state now the assumptions needed to apply the strategy described previously. Consider a general Galois extension $\mathbb{L}$ of a number field $\K$.
\begin{assumption}\label{ass2}
    There exists $\tau \in G_{\K} \cap I$, where $I$ is an inertia group over the prime $\mathfrak{p}$ of $\bb{K}$,
    such that
    \begin{equation*}
        \varepsilon_p(\tau)=g \text { with } g \in \mathbb{Z}, g>1
    \end{equation*}
    and $\tau_{\mid\mathbb{L}} \in Z(\Gal(\mathbb{L}/\K))$ (this element is called a \emph{central element}).
\end{assumption}
\begin{assumption}\label{ass3}
    There exist $C_1, C_2 \geq 1$ with the following properties. There exists a sequence of finite Galois extensions $\left(\mathbb{L}_n /\K\right)_{n \geq 0}$
    \begin{equation*}
        \K=\mathbb{L}_0 \subsetneq \mathbb{L}_1 \subsetneq \cdots \subsetneq \mathbb{L}_n \subsetneq \cdots \subsetneq \mathbb{L}, \quad \text { and } \quad \mathbb{L}=\bigcup_n \mathbb{L}_n
    \end{equation*}
    such that for $n \geq 1$ the extension $\mathbb{L}_n /\K$ is ramified over $\mathfrak{p}$. Moreover, let $e_n \geq 1$ be the ramification index, $\left(G_i^n\right)_i$ the sequence of ramification groups of $\mathbb{L}_n /\K$ at a place over $\mathfrak{p}$. We further denote by $H_n=G_{i_n}^n$ the last nontrivial ramification group and $\overline{H_n}$ its normal closure in $\Gal\left(\mathbb{L}_n /\K\right)$. Then
    \begin{equation*}
        \frac{e_n}{i_n+1} \leq C_1, \quad|\overline{H_n}| \leq C_2
    \end{equation*}
    and
    \begin{equation*}
        \mathbb{L}_n^{\overline{H_n}}=\mathbb{L}_{n-1}.
    \end{equation*}
\end{assumption}
With these hypotheses we can prove the following
\begin{theorem} \label{thm:composto}
    Let $\K$ be a number field and $\mathbb{F} /\K$ and $\mathbb{L} /\K$ two (possibly infinite) Galois extensions and suppose that:
    \begin{enumerate}
        \item The field $\mathbb{F}$ is unramified at a prime $\mathfrak{p}$ of $\K$ over $p$ and satisfies property (B).
        \item The field $\bb{L}$ satisfies assumptions \ref{ass2} and \ref{ass3}.
    \end{enumerate}
    The field $\mathbb{F L}$ has property (B).
\end{theorem}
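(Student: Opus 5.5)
I would follow the strategy sketched before Lemma \ref{lemma:adzs}, adapting \cite[Proposition 3.4]{AT25} to the base field $\bb{K}$. Set $\bb{M}_n=\bb{F}\bb{L}_n$, so $\bb{FL}=\bigcup_n \bb{M}_n$, and let $\alpha\in\bb{FL}^\times$ be neither zero nor a root of unity. Let $n$ be minimal with $\alpha\in\bb{M}_n$. If $n=0$, then $\alpha\in\bb{F}$ and $h(\alpha)\geq C_{\bb{F}}$ by hypothesis. For $n\geq 1$ I will bound $h(\alpha)$ below by a constant depending only on $p$, $g$, $C_1$, $C_2$, $[\bb{K}_{\mathfrak p}:\bb{Q}_p]/[\bb{K}:\bb{Q}]$ and $C_{\bb F}$. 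Everything reduces to a finite Galois extension $\bb{F}'\bb{L}_n/\bb{K}$, where $\bb{F}'\subseteq\bb{F}$ is a finite Galois extension of $\bb{K}$ containing the relevant coefficients of $\alpha$ and unramified at $\mathfrak p$. Working with finite subfields is harmless because (B) for $\bb{F}$ supplies a constant uniform over all of them.

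The first step is the auxiliary element. Take $\tau$ from Assumption \ref{ass2} and set $\beta=\tau(\alpha)/\alpha^{g}$, so $h(\beta)\leq (g+1)h(\alpha)$. Suppose first that $\beta\in\bb{F}\bb{M}_{n-1}$, with $\bb{M}_{n-1}=\bb{F}\bb{L}_{n-1}$. Since $\tau$ lies in inertia and $\bb{F}$ is unramified at $\mathfrak p$, $\tau$ acts trivially on $\bb{F}$; it is also central on $\bb{L}$. This should let me run the argument of \cite{AT25}: compare $\sigma(\alpha)/\alpha$ for $\sigma$ in $\overline{H_n}$ (lifted to act trivially on $\bb F$). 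Since $\tau$ commutes with $\sigma$, we get $\tau(\sigma\alpha/\alpha)=(\sigma\alpha/\alpha)^g$, and the quotient $\sigma\alpha/\alpha$ lies in a field where $\tau$ acts by $\varepsilon_p(\tau)=g$ on roots of unity. Iterating bounds heights or forces $\sigma\alpha/\alpha$ to be a root of unity of order bounded in terms of $C_2$. In the latter case $\alpha^{m}\in\bb{M}_{n-1}$ with $m\mid |\overline{H_n}|^{O(1)}$, and induction on $n$ together with $h(\alpha^m)=mh(\alpha)$ gives the bound with constants controlled by $C_2$. This case analysis is what I expect to be the main obstacle: choosing the right conjugates so that $\beta'/\beta$ is never a nontrivial $p^\infty$-root of unity, and handling roots of unity without losing uniformity in $n$.

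In the main case $\beta\notin \bb{F}'\bb{L}_{n-1}$, there is $\sigma\in\Gal(\bb{F}'\bb{L}_n/\bb{F}'\bb{L}_{n-1})\cong\overline{H_n}$, chosen inside $H_n$ (lifted trivially on $\bb{F}'$), with $\sigma\beta\neq\beta$; after the reduction above I may also assume $\sigma\beta/\beta$ is not a $p^\infty$-root of unity. Because $\bb{F}'$ is unramified at $\mathfrak p$, the ramification filtration of $\bb{F}'\bb{L}_n$ over $\mathfrak p$ matches that of $\bb{L}_n$ up to the same $e_n$. Hence $|\sigma\gamma-\gamma|_v\leq p^{-(i_n+1)/(e(\mathfrak p|p)e_n)}\leq p^{-1/(e(\mathfrak p|p)C_1)}$ for integral $\gamma$. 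The acceleration lemma then gives $\lambda$, depending only on $p$, $C_1$, $C_2$ and $e(\mathfrak p|p)$, with $|\sigma\gamma^{p^\lambda}-\gamma^{p^\lambda}|_v\leq p^{-|\overline{H_n}|}$. Since $\sigma\beta/\beta$ is not a $p^\infty$-root of unity, $\beta^{p^\lambda}\neq\sigma(\beta)^{p^\lambda}$.

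Finally, take $S$ to be the set of places above $\mathfrak p$ of the form $\phi(v)$ with $\phi$ commuting with $\sigma$. Its complement is controlled by the index of the centralizer, which is at most $|\overline{H_n}|$ because conjugates of $\sigma$ lie in $\overline{H_n}$. So $|S|[\bb{M}_v:\bb{Q}_p]/[\bb{M}:\bb{Q}]\geq [\bb{K}_{\mathfrak p}:\bb{Q}_p]/([\bb{K}:\bb{Q}]\,|\overline{H_n}|)$. Lemma \ref{lemma:adzs} with $a=b=p^\lambda$ and $\rho=|\overline{H_n}|$ then gives $h(\beta)\geq \bigl(\tfrac{[\bb{K}_{\mathfrak p}:\bb{Q}_p]}{[\bb{K}:\bb{Q}]}\log p-\log 2\bigr)/(2p^\lambda)$. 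If this quantity is nonpositive, I first replace $\rho$ by a larger multiple via a larger $\lambda$. Combined with $h(\alpha)\geq h(\beta)/(g+1)$, this yields a uniform positive bound and proves (B) for $\bb{FL}$.
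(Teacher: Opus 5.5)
There is a genuine gap, and it is the case you flagged yourself. Because you take $n$ minimal with $\alpha\in\bb{M}_n$, you have to treat the case $\beta\in\bb{M}_{n-1}$, and your descent there does not give a uniform bound. Grant that each step costs only a bounded factor; for instance, replacing $\alpha$ by its relative norm $N\alpha$ to $\bb{M}_{n-1}$ gives $h(\alpha)=h(N\alpha)/|\overline{H_n}|$. The situation still recurs. By centrality of $\tau$, the auxiliary element attached to $N\alpha$ is $N\beta=\beta^{|\overline{H_n}|}$, which lies at a level no higher than that of $\beta$. So in the worst case you descend one level at a time through every level between those of $\beta$ and $\alpha$, losing a factor up to $C_2$ each time. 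The resulting bound decays like $C_2^{-n}$, which is useless as $n\to\infty$.

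The missing idea is that the level of $\alpha$ is irrelevant. You have $h(\alpha)\geq h(\beta)/(g+1)$, and $\beta$ is never a root of unity (Proposition \ref{prop:specialset}). So let $n$ be minimal with $\beta\in\bb{F}'\bb{L}_n$. Then $n=0$ is handled by (B) for $\bb{F}$, and $n\geq 1$ puts you automatically in your main case, whose constant does not depend on $n$. This is what the paper does. Likewise, you need no ``reduction'' to know that $\sigma(\beta)/\beta$ is not a $p^\infty$-root of unity. Since $\tau$ is trivial on the unramified $\bb{F}$, it is central in $\Gal(\bb{FL}/\bb{K})$, and the claim is just Proposition \ref{prop:specialset} applied to $\bb{FL}$, for every $\sigma$ moving $\beta$.

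There is a second, smaller gap in the main case. From $\beta\notin\bb{F}'\bb{L}_{n-1}$ you only get an element of $\Gal(\bb{F}'\bb{L}_n/\bb{F}'\bb{L}_{n-1})\cong\overline{H_n}$ that moves $\beta$. You do not get one in the ramification group $H_n'$ at your fixed place $v$ (the lift of $H_n$), and only the latter gives $|\sigma\gamma-\gamma|_v\leq p^{-(i_n+1)/(e(\mathfrak p|p)e_n)}$. The fix is to pass to a conjugate. The normal closure is generated by the conjugates $\phi^{-1}H_n'\phi$, so some $\phi^{-1}\sigma\phi$ with $\sigma\in H_n'$ moves $\beta$; that is, $\sigma$ moves $\beta':=\phi(\beta)$. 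By centrality of $\tau$, $\beta'=\tau(\phi\alpha)/(\phi\alpha)^g$ is again of the special form and has the same height, so you may work with $\beta'$.

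With these two repairs, the rest of your argument is the paper's proof. That includes the unchanged filtration and $e_n$ over the unramified $\bb{F}'$, the uniform $\lambda$ from the acceleration lemma, the centralizer orbit $S$, and Lemma \ref{lemma:adzs} with $a=b=p^\lambda$. Your final step is in fact more careful than the paper's. The paper's constant $\log(p/2)/(2p^\lambda)$ omits the factor $[\bb{K}_\mathfrak{p}:\bb{Q}_p]/[\bb{K}:\bb{Q}]$ that comes out of Lemma \ref{lemma:adzs}. Your remedy, accelerating to a larger $\rho$ (a multiple of $|\overline{H_n}|$ by a constant depending only on $[\bb{K}:\bb{Q}]$), is exactly what is needed when $\bb{K}\neq\bb{Q}$.
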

The unramifiedness of $\bb{F}$ at $\mathfrak{p}$ is required to control the ramification of $\bb{FL}$. The following proposition states that if \hyperref[ass2]{assumption 2} is satisfied then providing a lower bound for some special elements of $\bb{L}$ is sufficient for a global bound of $\bb{L}$. The proof is similar to \cite[Proposition 4.2]{ADZ14}.
\begin{proposition}\label{prop:specialset}
    Let $\bb{L}$ be a Galois extension of a number field $\bb{K}$ that satisfies \hyperref[ass2]{assumption 2}. For each nonzero $\alpha\in\bb{L}$ that is not a root of unity, there exists an element $\beta$ such that $h(\alpha)\geq h(\beta)$ and for all $\sigma\in\Gal(\bb{L}/\bb{K})$ that do not fix $\beta$, we have that $\sigma(\beta)/\beta$ is not a root of unity of order a power of $p$.
\end{proposition}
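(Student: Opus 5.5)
The plan is to take for $\beta$ a $p$-power root of $\alpha$, up to a root of unity, of maximal exponent inside $\bb{L}$. Then $h(\beta)=h(\alpha)/p^m\le h(\alpha)$ by Proposition \ref{eqn:propaltezza}(3) and (7). The whole difficulty is showing that such a maximal exponent exists, which is where I would use \hyperref[ass2]{Assumption 2}.

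\textbf{Setup.} Fix a nonzero $\alpha\in\bb{L}$ that is not a root of unity. Let $\mathcal{M}(\alpha)$ be the set of integers $m\ge 0$ for which some $\beta\in\bb{L}$ satisfies
\begin{equation*}
\beta^{p^m}=\zeta\alpha
\end{equation*}
with $\zeta\in\mu(\bb{L})$. Certainly $0\in\mathcal{M}(\alpha)$. If $m\in\mathcal{M}(\alpha)$ with witness $\beta$, then $h(\beta)=p^{-m}h(\zeta\alpha)=p^{-m}h(\alpha)\le h(\alpha)$, and $\beta$ is not a root of unity.

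\textbf{Key step.} Suppose $\beta$ is a witness for $m$ and some $\sigma\in\Gal(\bb{L}/\bb{K})$ has $\sigma(\beta)=\xi\beta$ with $\xi\ne1$ a root of unity of $p$-power order $p^r$. I would use this to produce a witness for a strictly larger exponent. Let $\tau$ be the central element of \hyperref[ass2]{Assumption 2}, so $\tau(\xi)=\xi^g$. Applying $\tau$ to $\sigma(\beta)=\xi\beta$ and using $\tau\sigma=\sigma\tau$ gives
\begin{equation*}
\sigma(\tau(\beta))=\xi^g\tau(\beta).
\end{equation*}
Hence $\gamma:=\tau(\beta)\beta^{-g}$ is fixed by $\sigma$.

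Comparing the two cocycles $\sigma\mapsto\sigma(\beta)/\beta$ and $\sigma\mapsto\sigma(\tau\beta)/\tau\beta$ on the group generated by $\sigma$ and $\tau$, I would use Kummer theory over $\bb{K}(\mu_{p^\infty}\cap\bb{L})$. The idea is that $\tau$ acts on the Kummer group of $\beta$ through multiplication by $\varepsilon_p(\tau)=g$, while centrality forces this action to be trivial. Then $\xi^{g-1}=1$ on the relevant part, which bounds $r$ by $v_p(g-1)$.

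Once $r$ is bounded, I would extract a $p$-th root of $\beta$ (up to a root of unity) inside $\bb{L}$ whenever such a $\sigma$ exists. This raises $m$ by one, and it is here that the non-$p$-power-root-of-unity condition enters.

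\textbf{Boundedness of $\mathcal{M}(\alpha)$.} Write $\bb{M}=\bb{K}(\alpha,\mu(\bb{L})_{p^\infty})$. The field $\bb{M}(\alpha^{1/p^m})$ has Kummer degree over $\bb{M}$ growing like $p^{m-c(\alpha)}$, because $\alpha$ is not a root of unity. Its Galois group is a $\Z_p$-module on which, by the Kummer pairing, the inertial element $\tau$ acts by conjugation through $g\ne1$. Since $\tau$ is central in $\Gal(\bb{L}/\bb{K})$, this conjugation must be trivial on any part of the Kummer extension contained in $\bb{L}$. This forces $(g-1)$ to kill that part, so it has order at most $p^{v_p(g-1)}$ times a constant depending on $\alpha$. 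Therefore $\mathcal{M}(\alpha)$ is finite.

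\textbf{Conclusion.} Take $m=\max\mathcal{M}(\alpha)$ and $\beta$ a witness. If some $\sigma$ moved $\beta$ by a nontrivial $p$-power root of unity, the key step would give a witness for $m+1$, contradicting maximality. So $\beta$ has the required property, and $h(\beta)\le h(\alpha)$.

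\textbf{Main obstacle.} I expect the hardest part to be the Kummer-theoretic claim that centrality of $\tau$ kills the Kummer part above level $v_p(g-1)$. It is delicate because $\tau$ need not fix $\alpha$, only act centrally. My first attempt would be to replace $\alpha$ by a suitable power of the norm-like product $\prod_i\tau^i(\alpha)$ and argue on the subgroup generated by $\alpha$ and its $\tau$-translates.
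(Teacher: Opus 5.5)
Your argument breaks at the key step. From $\sigma(\beta)=\xi\beta$ with $\xi\neq 1$ a $p$-power root of unity, you cannot extract a $p$-th root of $\beta$ (even up to a root of unity) inside $\bb{L}$. Kummer theory points the other way: being moved by $p$-power roots of unity is exactly what happens to elements that are themselves $p$-power roots. Pushing $m$ up therefore makes matters worse, not better.

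Here is a concrete counterexample. Take $\bb{K}=\Q$, $p$ odd and $\bb{L}=\Q(\zeta_p,2^{1/p})$. The character $\varepsilon_p$ maps an inertia group $I$ at $p$ onto $\Z_p^\times$, and $I\cap G_{\bb{L}}$ is open in $I$. Hence there is $\tau\in I\cap G_{\bb{L}}$ with $\varepsilon_p(\tau)=1+p^s=:g$ for some $s\geq 1$. Since $\tau_{\mid\bb{L}}$ is trivial, hence central, Assumption~\ref{ass2} holds.

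Now let $\alpha=2$. The ramification index of $2$ in $\bb{L}$ is exactly $p$, so no $\beta\in\bb{L}$ satisfies $\beta^{p^m}=2\zeta$ with $m\geq 2$ and $\zeta$ a root of unity. Thus $\max\mathcal{M}(\alpha)=1$. Every witness at level $1$ has the form $\beta=\eta\,2^{1/p}$ with $\eta\in\mu(\bb{L})=\mu_{2p}\subset\Q(\zeta_p)$. The element $\sigma\in\Gal(\bb{L}/\Q(\zeta_p))$ with $\sigma(2^{1/p})=\zeta_p 2^{1/p}$ gives $\sigma(\beta)/\beta=\zeta_p$. So your maximal witness violates the required property, whereas the level-$0$ witness $\beta=2$ satisfies it.

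In this example $\tau$ acts trivially on $\bb{L}$, so your relation $\xi^{g-1}=1$ does hold. Bounding $r$ therefore does not rescue the extraction step. That relation is also unproved in general: centrality only gives $\sigma(\tau\beta)/\tau(\beta)=\xi^{g}$. Turning this into $\xi^{g-1}=1$ needs $\tau$ to fix $\beta^{p^r}$ up to roots of unity, which is exactly the obstacle you flag and do not resolve.

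The missing idea is already in your computation: $\tau(\beta)\beta^{-g}$ is the right object. The paper simply applies it to $\alpha$ and sets $\beta:=\tau(\alpha)/\alpha^{g}$.
\begin{itemize}
\item This $\beta$ is not a root of unity, since otherwise $h(\alpha)=h(\tau\alpha)=g\,h(\alpha)$, forcing $h(\alpha)=0$.
\item It satisfies $h(\beta)\leq (g+1)h(\alpha)$.
\item By centrality, $\sigma(\beta)/\beta=\tau(\eta)/\eta^{g}$ with $\eta=\sigma(\alpha)/\alpha$. If this is a root of unity, then $h(\eta)=g\,h(\eta)$, so $\eta$ is a root of unity.
\item Write $\eta=\eta_1\eta_2$ with $\eta_1\in\mu_{p^\infty}$ and $\eta_2$ of order prime to $p$. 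The identity $\tau(\eta_1)=\eta_1^{g}$ (because $\varepsilon_p(\tau)=g$) shows that $\sigma(\beta)/\beta=\tau(\eta_2)/\eta_2^{g}$ has order prime to $p$. If it also has $p$-power order, it equals $1$.
\end{itemize}

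This gives $h(\alpha)\geq h(\beta)/(g+1)$, not the literal $h(\alpha)\geq h(\beta)$. A constant independent of $\alpha$ is all that Theorem~\ref{thm:composto} uses. That theorem also relies on the specific shape $\tau(\alpha)/\alpha^{g}$, which commutes with conjugation. Read literally, without requiring $\beta$ not to be a root of unity, the statement is trivially satisfied by $\beta=1$. Insisting on the exact inequality $h(\beta)\leq h(\alpha)$ is what pushed you toward root extraction, which is the wrong direction.
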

\begin{proof}
    Let $\alpha\in \bb{L}$ not a root of unity and define $\beta=\tau(\alpha)/\alpha^g$ where $\tau$ and $g$ are given by \hyperref[ass2]{assumption 2}. Observe that if $\beta$ were a root of unity then we would have
    \begin{equation}\label{eq:radici2}
        h(\alpha)=h(\tau(\alpha))=h(\beta\alpha^g)=h(\alpha^g)=gh(\alpha)
    \end{equation}
    and since $g>1$ the equation \eqref{eq:radici2} would imply that $h(\alpha)=0$ which is a contradiction since $\alpha$ is not a root of unity. Observe also that
    \begin{equation*}
        h(\beta)=h(\tau(\alpha)\alpha^{-g})\leq h(\tau(\alpha))+gh(\alpha)=(g+1)h(\alpha).
    \end{equation*}
    This means that $h(\alpha)\geq{h(\beta)}/(g+1)$. Since $\bb{K}$ is a number field then it has (B) by Northcott's theorem, so if $\beta\in\bb{K}$ then $h(\beta)\geq c_0$ for some constant $c_0>0$. If $\beta\notin\bb{K}$ there exists $\sigma \in \Gal(\bb{L}/\bb{K})$ such that $\sigma(\beta)\neq \beta$ and since $\tau$ is central we have
    \begin{equation*}
        \frac{\sigma(\beta)}{\beta}=\frac{{\sigma}\tau(\alpha)}{\tau(\alpha)}\left(\frac{{\sigma}(\alpha^{ g})}{\alpha^{ g}} \right)^{-1}=\frac{\tau(\eta)}{\eta^g}
    \end{equation*}
    where $\eta={{\sigma}(\alpha)}/{\alpha}\in\bb{L}$. Now if ${\sigma(\beta)}/{\beta}$ was a $p$-root of unity, its height would be zero and since $g>1$, $\eta$ would be a root of unity since $h(\eta)=0$. Writing $\eta=\eta_1\eta_2$ with $\eta_1$ a $p$-root of unity and $\eta_2$ with order not divisible by $p$ we have $\tau(\eta_1)/\tau(\eta_1^g)=1$ since $\varepsilon_p(\tau)=g$. This implies that $\sigma(\beta)/\beta=\tau(\eta_2)/\tau(\eta_2^g)$ has order not divisible by $p$ which is a contradiction. This means that ${\sigma(\beta)}/{\beta}$ is not a $p$-root of unity.
\end{proof}
We are ready now to prove Theorem \ref{thm:composto}.
\begin{proof}[Proof of Theorem \ref{thm:composto}]
    Let $I\subset\Gal(\Qb/\bb{K})$ be an inertia group at $\mathfrak{p}$ and $\tau\in I$, $g\in\Z$ as in \hyperref[ass2]{assumption 2}. The restriction $\tau_{\mid\bb{F}}$ belongs to an inertia group $I$ of $\bb{F}$ over $\mathfrak{p}$  but by our hypotheses on $\bb{F}$ this group is zero so the restriction of $\tau$ to $\bb{F}$ is trivial. Moreover since $\tau_{\mid\bb{L}}\in Z(\Gal(\bb{L}/\bb{K}))$ we have that $\tau_{\mid\bb{FL}}\in Z(\Gal(\bb{FL}/\bb{K}))$ so even $\bb{FL}$ satisfies \hyperref[ass2]{assumption 2}. Let $\alpha\in\bb{FL}$ not a root of unity, $\beta:=\tau(\alpha)/\alpha^g$ and $\bb{K}\subseteq\bb{F}_0\subseteq \bb{F}$ a finite Galois extension of $\bb{K}$ such that $\beta\in\bb{F}_0\bb{L}$. Let $\{\bb{L}_n\}_{n\in\bb{N}}$ be the Galois subextensions of $\bb{L}$ as in \hyperref[ass3]{assumption 3} and let $n\geq 0$ be the smallest integer such that $\beta\in\bb{F}_0\bb{L}_n$. Since $\bb{F}$ is a number field, by Northcott's theorem it has (B) so for all $x\in \bb{F}$ we have $h(x)\geq c_0>0$ for some constant $c_0$. Observe that if $n=0$ then $\beta\in\bb{F}_0\bb{L}_0\subseteq \bb{F}$ and we have $h(\beta)\geq c_0$. We can therefore assume $n>0$. Let now $v$ be a place of $\bb{F}_0\bb{L}_n$ above $\mathfrak{p}$. Since $\bb{F}_0$ is unramified at $\mathfrak{p}$ the ramification index of $v_{\mid \bb{F}_0}$ over $\bb{K}$ equals the ramification index of $v_{\mid\bb{L}_n}$. Let $G_i^n\subseteq\Gal(\bb{L}_n/\bb{K})$ and $G^{\prime n}_i\subseteq\Gal(\bb{F}_0\bb{L}_n/\bb{K})$ the $i$-the ramification groups of $v_{\mid \bb{L}_n}$ and $v_{\mid \bb{F}_0\bb{L}_n}$. The restriction map $\Gal(\bb{F}_0\bb{L}_n/\bb{F}_0)\rightarrow \Gal(\bb{L}_n/\bb{K})$ sends the last non trivial ramification group $H^\prime_n:=G^{\prime n}_{i_n}$ bijectively onto $H_n=G^{n}_{i_n}$ and similarly for the normal closures. Since $\bb{L}_n^{\overline{H_n}}=\bb{L}_{n-1}$ we have $(\bb{F}_0\bb{L}_n)^{\overline{H^\prime_n}}=\bb{F}_0\bb{L}_{n-1}$, so by the minimality of $n$ we have that $\beta\notin(\bb{F}_0\bb{L}_n)^{\overline{H^\prime_n}}$ and there exists $\phi\in\Gal(\bb{F}_0\bb{L}_n/\bb{K})$ and a $\sigma\in H^\prime_n$ such that $\phi^{-1}\sigma\phi(\beta)\neq \beta$. Letting $\beta^\prime:=\phi(\beta)$ we have that $\sigma(\beta^\prime)\neq \beta^\prime$; lifting $\phi$ to $\Gal(\bb{FL}/\bb{K})$ we have
    \begin{equation*}
        {\phi}(\beta)=\frac{{\phi}\tau(\alpha)}{{\phi}(\alpha^g)}=\frac{\tau{\phi}(\alpha)}{{\phi}(\alpha)^g}
    \end{equation*}
    since $\tau$ lies in the center of $\Gal(\bb{FL}/\bb{K})$. We have then found $\sigma\in G^{\prime n}_{i_n}$ and $\phi(\alpha)=:\alpha^\prime\in\bb{FL}$ such that
    \begin{equation*}
        \beta^\prime=\frac{\tau(\alpha^\prime)}{\alpha^{\prime g}}\ \text{ satisfies }\ \frac{\sigma(\beta^\prime)}{\beta^\prime}\neq 1.
    \end{equation*}
    By Proposition \ref{prop:specialset} we know that it is sufficient to give a lower bound to $h(\beta^\prime)$ to give a lower bound to $h(\alpha^\prime)$ and since the height is invariant under conjugation this will give the desired bound to $h(\alpha)$. Moreover by the same proposition we know that ${\sigma(\beta^\prime)}/{\beta^\prime}$ is not a root of unity of order a power of $p$. Since $ \sigma \in G_{i_n}^{\prime n}$, for any $ \gamma \in \mathcal{O}_{\mathbb{F}_0 \mathbb{L}_n} $ we have
    \begin{equation*}
        |\sigma(\gamma) - \gamma|_v \leq p^{-\frac{(i_n + 1)}{e(\mathfrak{p}|p)e_n}}
    \end{equation*}
    where $e_n$ is the ramification degree of a place above of $\mathfrak{p}$ in ${\mathbb{F}_0 \mathbb{L}_n}$. For any $p$-power $ p^\lambda $ we have $ \sigma(\beta^\prime)^{p^\lambda} \neq \beta^{\prime{p^\lambda}}$ and by \cite[Lemma 2.1]{AD00}, there exists a integer $\lambda>0$ such that
    \begin{equation*}
        |\sigma(\gamma^{p^\lambda}) - \gamma^{p^\lambda}|_v \leq p^{-|\overline{H_n^\prime}|}.
    \end{equation*}
    Let $\phi$ be an element in the centralizer $C(\sigma)$ of $\sigma$ in $\Gal(\bb{F}_0 \bb{L}_n/\bb{K})$. By substituting $\gamma$ with $\phi^{-1} \gamma $ in the previous inequality, since $ \sigma \phi^{-1} = \phi^{-1} \sigma $ and $ |\phi^{-1}(x)|_v = |x|_{\phi(v)}$, we get
    \begin{equation*}
        |\sigma(\gamma^{p^\lambda}) - \gamma^{p^\lambda}|_w \leq p^{-|\overline{H_n^\prime}|}
    \end{equation*}
    with $ w = \phi(v) $ and similarly for any $ w $ in the orbit $ S $ of $ v $ under the action of $ C(\sigma) $ on the places above $\mathfrak{p}$. Applying Lemma \ref{lemma:adzs} with $S$ as above we have the bound
    \begin{equation*}
        h(\alpha)\geq\frac{1}{2}\left(|S|\frac{[(\bb{F}_0 \bb{L}_n)_v : \bb{K}_w]}{[\bb{F}_0 \bb{L}_n : \bb{K}]}|\overline{H_n^\prime}|\log p-\log 2\right).
    \end{equation*}
    We now bound $|S|$. Note that $\sigma \in \overline{H_n^\prime}$ which is normal in $\Gal(\bb{F}_0\bb{L}_n/\bb{K})$, thus the orbit $O$ of $\sigma$ is contained in $\overline{H_n^\prime}$ and
    \begin{equation*}
        |C(\sigma)| = \frac{[\bb{F}_0 \bb{L}_n : \bb{K}]}{|O|} \geq \frac{[\bb{F}_0 \bb{L}_n : \bb{K}]}{|\overline{H_n^\prime}|}.
    \end{equation*}
    The stabiliser of $v$ under the action of $\Gal(\bb{F}_0\bb{L}_n/\bb{K})$ is by definition the decomposition group $D$ of $v$ over $\mathfrak{p}$. Thus,
    \begin{equation*}
        |S| = \frac{|C(\sigma)|}{|D\cap C(\sigma)|} \geq \frac{[\bb{F}_0 \bb{L}_n : \bb{K}]}{[(\bb{F}_0 \bb{L}_n)_v : \bb{K}_v]} |\overline{H_n^\prime}|^{-1}.
    \end{equation*}
    We then get $h(\beta^\prime) \geq c$ with
    \begin{equation*}
    c = \frac{\log(p/2)}{2p^{\lambda}} > 0.
    \end{equation*}
\end{proof}

\subsection{Outside of $p$}
In this section we deal with the representation outside of $p$. We start with the following definition which was inspired by \cite{ADZ14}.
\begin{definition}\label{def:adz}
    Let $\bb{K}$ be a number field and $\bb{L}/\bb{K}$ a Galois extension of $\bb{K}$ with Galois group $G$. We say that $\bb{L}$ has the \emph{ADZ property} or that it is a \emph{ADZ field} if the extension $\bb{L}^{Z(G)}/\bb{K}$ has bounded local degrees over some non Archimedean place $v$ of $\bb{K}$. We will say that a field has ADZ or that is an ADZ field \emph{at $v$} to specify the place.
\end{definition}
This definition stems from one of the main results of \cite{ADZ14} where it is shown that property ADZ implies property (B) (see \cite[Theorem 1.5]{ADZ14}). In this section we will prove that ADZ is preserved under compositum and therefore we exhibit a new class of fields that have (B) that is closed under compositum i.e. taking the compositum also has (B). Before proving this we first have a useful lemma on the center of the fiber product of groups.
\begin{lemma}\label{lemma:centro}
    Let ${G}_{i\in I}$ be a family of groups and $\{\phi_i:G_i\rightarrow H\}_{i\in I}$ a family of surjective morphisms onto some fixed group $H$. If
    \begin{equation*}
        G=G_1\times_HG_2\times_H\cdots =\left\{(g_i)\in \prod_{i\in I}G_i\mid \phi_i(g_i)=\phi_j(g_j)\text{ for all } i,j\in I\right\}
    \end{equation*}
    is the fiber product over $H$, then
    \begin{equation*}
        Z(G)=\{(g_i)\in G\mid g_i\in Z(G_i) \}.
    \end{equation*}
\end{lemma}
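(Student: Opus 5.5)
The plan is to prove the two inclusions separately. The inclusion $\supseteq$ is immediate. If $(g_i)\in G$ with each $g_i\in Z(G_i)$, then for any $(h_i)\in G$ the products $(g_ih_i)$ and $(h_ig_i)$ agree coordinatewise. Since the group law on $G$ is the one inherited from $\prod_i G_i$, the element $(g_i)$ commutes with every element of $G$, so it lies in $Z(G)$.

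For the inclusion $\subseteq$, take $(g_i)\in Z(G)$ and fix an index $j\in I$; we must show $g_j\in Z(G_j)$. The key step is a lifting argument. Given an arbitrary $x\in G_j$, put $y=\phi_j(x)\in H$. By surjectivity of each $\phi_i$, choose for every $i\neq j$ some $x_i\in G_i$ with $\phi_i(x_i)=y$, and set $x_j=x$. Then $(x_i)_{i\in I}$ satisfies $\phi_i(x_i)=\phi_k(x_k)$ for all $i,k$, so it belongs to $G$. (For infinite $I$ the choice uses the axiom of choice, which is harmless here.) Because $(g_i)$ is central in $G$, it commutes with $(x_i)$. Comparing the $j$-th coordinates gives $g_jx=xg_j$. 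Since $x\in G_j$ was arbitrary, $g_j\in Z(G_j)$.

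The only point needing care is that this coordinatewise lifting really lands in the fiber product, which is exactly where the surjectivity of every $\phi_i$ onto the common group $H$ is used. Without surjectivity, an element $x\in G_j$ whose image is not hit by some $\phi_i$ could not be completed to an element of $G$, and the inclusion $\subseteq$ could fail. No other obstacle is expected; the argument is purely set-theoretic and group-theoretic and uses nothing from the earlier sections.
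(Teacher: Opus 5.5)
Your proposal is correct and follows essentially the same argument as the paper. You treat $\supseteq$ as immediate and prove $\subseteq$ by using surjectivity of every $\phi_i$ to lift an arbitrary $x\in G_j$ to an element of the fiber product with $j$-th coordinate $x$. You then compare $j$-th coordinates of the commutation relation with the central element.
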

\begin{proof}
    Let $\pi_i : G\rightarrow G_i$ be the projection on the $i$-th factor. If an element $g\in G$ is such that $\pi_i(g)\in Z(G_i)$ for all $i\in I$ then it clearly lies in the center of $G$. On the other hand let $g\in Z(G)$. Fix an arbitrary element $h_i\in G_i$, we will show that $\pi_i(g)\in Z(G_i)$ for all $i\in I$ by showing that it commutes with $h_i$. Since each $\phi_j$ is surjective onto $H$ we can find an element $g^\prime\in G$ such that $\phi_j(\pi_j(g^\prime))=\phi_i(h_i)$ and $\pi_i(g^\prime)=h_i$. We have then
    \begin{equation*}
        g^\prime \cdot g\cdot (g^{\prime})^{-1} =g
    \end{equation*}
    since $g\in Z(G)$ and taking $\pi_i$ we have that $h_ig_ih_i^{-1}=g_i$. The claim follows.
\end{proof}
We now state a lemma about the field fixed by the center of a compositum of fields. Roughly it states that \say{the field fixed by the center of the compositum is the compositum of the fields fixed by the centers}. 
\begin{lemma}\label{lemma:centrocomposito}
    Let $\{\bb{L}_i\}_{i\in I}$ be a family of Galois extension of a number field $\bb{K}$ with Galois group $G_i$ lying in a common algebraic closure. Denote by $\bb{L}$ the compositum of all the $\bb{L}_i$ and $G$ its Galois group over $\bb{K}$. We have that $\bb{L}^{Z(G)}$ equals the compositum of the fixed fields $\bb{L}_i^{Z(G_i)}$.
\end{lemma}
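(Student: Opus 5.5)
We identify $G=\Gal(\bb{L}/\bb{K})$ with the fiber product of the $G_i$ over a suitable common quotient, apply Lemma \ref{lemma:centro}, and then translate back through the Galois correspondence. The restriction maps $G\to G_i$ are surjective. The product map $G\to\prod_i G_i$ is injective, because $\bb{L}$ is generated by the $\bb{L}_i$. The image is not in general a fiber product over a single group $H$. However, we do not need the full fiber-product structure; we only need the conclusion of Lemma \ref{lemma:centro}, namely
\begin{equation*}
    Z(G)=\{g\in G\mid g_{\mid \bb{L}_i}\in Z(G_i)\text{ for all } i\in I\}.
\end{equation*}

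We prove this identity directly. The proof is the same as that of Lemma \ref{lemma:centro}, using only surjectivity of each restriction $\pi_i\colon G\to G_i$. If $g$ restricts to a central element of every $G_i$, then for any $h\in G$ the commutator $ghg^{-1}h^{-1}$ restricts trivially to each $\bb{L}_i$. By injectivity of $G\to\prod G_i$ it is trivial, so $g\in Z(G)$. Conversely, let $g\in Z(G)$ and $h_i\in G_i$. Lift $h_i$ to some $h\in G$ by surjectivity; projecting $hgh^{-1}=g$ to $G_i$ gives $h_i\pi_i(g)h_i^{-1}=\pi_i(g)$. This also covers infinite $I$ and infinite extensions, with profinite groups. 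Here $Z(G)$ is closed and the $\pi_i$ are continuous.

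For the Galois correspondence, let $\bb{M}$ be the compositum of the $\bb{L}_i^{Z(G_i)}$. An element $g\in G$ fixes $\bb{M}$ if and only if it fixes each $\bb{L}_i^{Z(G_i)}$. Since $\bb{L}_i^{Z(G_i)}\subseteq\bb{L}_i$, this happens if and only if $g_{\mid\bb{L}_i}$ fixes $\bb{L}_i^{Z(G_i)}$. Because $Z(G_i)$ is a closed subgroup of $G_i$, this is equivalent to $g_{\mid\bb{L}_i}\in Z(G_i)$. Hence $\Gal(\bb{L}/\bb{M})=Z(G)$, and by infinite Galois theory (again using that $Z(G)$ is closed) $\bb{L}^{Z(G)}=\bb{M}$.

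The only delicate point is the topological bookkeeping in the infinite case. We must check that the centers are closed, which holds since centralizers in a Hausdorff group are closed. We must also check that the subgroup fixing a compositum is the intersection of the subgroups fixing its pieces. Both facts are standard, so we expect no real obstacle beyond stating them.
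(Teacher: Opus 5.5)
Your proof is correct, and its skeleton matches the paper's. Both first characterize $Z(G)$ as the set of $g$ whose restrictions $g_{\mid\mathbb{L}_i}$ are central in every $G_i$. Both then observe that $\bigcap_i\pi_i^{-1}(Z(G_i))$ is the subgroup fixing the compositum of the $\mathbb{L}_i^{Z(G_i)}$.

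The difference is in the first step. The paper writes elements of $G$ as tuples $(g_j)$ and cites Lemma \ref{lemma:centro}. This implicitly identifies $G$ with the fiber product of the $G_i$ over $H=\Gal(\bigcap_i\mathbb{L}_i/\mathbb{K})$. As you point out, that identification fails once there are three or more fields. For $\Q(\sqrt{2}),\Q(\sqrt{3}),\Q(\sqrt{6})$ over $\Q$, the fiber product over the trivial group $H$ has order $8$, while $G$ has order $4$.

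Your direct argument uses only surjectivity of each $\pi_i$ and injectivity of $G\to\prod_i G_i$. These are exactly the properties the proof of Lemma \ref{lemma:centro} actually relies on, so your version repairs that step rather than merely rephrasing it. Your observation that centers of Hausdorff groups are closed also supplies the topological input $\Gal(\mathbb{L}_i/\mathbb{L}_i^{Z(G_i)})=Z(G_i)$, which the paper leaves implicit in the infinite case.

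A minor point: closedness of $Z(G)$ is not needed in your last step. The identity $\mathbb{L}^{\Gal(\mathbb{L}/\mathbb{M})}=\mathbb{M}$ holds for any intermediate field $\mathbb{M}$ of a Galois extension, so once $\Gal(\mathbb{L}/\mathbb{M})=Z(G)$ the conclusion follows immediately. Invoking closedness there does no harm.
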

\begin{proof}
    Let $\pi_i : G\rightarrow G_i$ denote the restriction map and $H=\Gal(\cap_{i\in I}\bb{L}_i/\bb{K})$. Each $\bb{L}_i$ corresponds to the subfield of $\bb{L}$ fixed by $\pi_i^{-1}(G_i)$ i.e. we have $\bb{L}_i= \bb{L}^{\pi_i^{-1}(G_i)}$ for all $i\in I$. Since the compositum of the $\bb{L}^{\pi_i^{-1}(Z(G_i))}$ is the smallest field that contains them, the subgroup of $G$ corresponding to it is the intersection of all the $\pi_i^{-1}(Z(G_i))$. We have
    \begin{equation*}
        \bigcap_{i\in I} \pi_i^{-1}(Z(G_i))=\{(g_j)\in G\mid g_i\in Z(G_i)\text{ for all } i\in I\}=Z(G)
    \end{equation*}
    where the last inequality comes from Lemma \ref{lemma:centro}.
\end{proof}
We are now ready to prove that the compositum of ADZ fields is again ADZ.
\begin{theorem}\label{thm:compostoadz}
    Let $\{\bb{L}_i\}_{i\in I}$ be a family of field extensions of $\bb{K}$ that are all ADZ at a place $v$ of $\bb{K}$. If each $\bb{L}_i$ has bounded local degree uniformly on $i$ then the compositum of all the $\bb{L}_i$ is an ADZ field at $v$.
\end{theorem}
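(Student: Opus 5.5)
The plan is to reduce everything to Lemma \ref{lemma:centrocomposito} and then to a finiteness statement about local extensions. Let $\bb{L}$ be the compositum of the $\bb{L}_i$ with $G=\Gal(\bb{L}/\bb{K})$ and $G_i=\Gal(\bb{L}_i/\bb{K})$. By Lemma \ref{lemma:centrocomposito}, $\bb{L}^{Z(G)}$ is the compositum of the fields $\bb{M}_i:=\bb{L}_i^{Z(G_i)}$. So it suffices to show that this compositum has bounded local degrees over $v$. I read the uniformity hypothesis as follows: there is a $d_0$ such that, for every $i$ and every place $w$ of $\bb{M}_i$ above $v$, we have $[(\bb{M}_i)_w:\bb{K}_v]\le d_0$. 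If the hypothesis instead bounds the local degrees of $\bb{L}_i$ itself, then those of $\bb{M}_i\subseteq\bb{L}_i$ are bounded as well, so the conclusion is the same.

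Next I localize at $v$. Fix an embedding $\overline{\bb{K}}\hookrightarrow\overline{\bb{K}_v}$. Every place of the compositum above $v$ comes from such an embedding, up to conjugation. The completion of $\bb{M}:=\prod_i\bb{M}_i$ at the induced place is the compositum inside $\overline{\bb{K}_v}$ of the completions $(\bb{M}_i)_{w_i}$, where $w_i$ is the restricted place. This holds because each element of $\bb{M}$ lies in a finite subcompositum, and the completion of a finite compositum is the compositum of the completions. Each $(\bb{M}_i)_{w_i}$ is an extension of $\bb{K}_v$ of degree at most $d_0$ inside $\overline{\bb{K}_v}$.

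The key input is Krasner's lemma, in its consequence that a $p$-adic field $\bb{K}_v$ has only finitely many extensions of degree at most $d_0$ inside $\overline{\bb{K}_v}$. Hence the fields $(\bb{M}_i)_{w_i}$, $i\in I$, range over a finite set $\{E_1,\dots,E_r\}$ that depends only on $\bb{K}_v$ and $d_0$, and not on the embedding or on $I$. The local completion of $\bb{M}$ is then contained in $E_1\cdots E_r$. This field has degree at most $d_0^{r}$ over $\bb{K}_v$, and in fact it lies inside the compositum $E$ of all extensions of degree $\le d_0$, which is a fixed finite extension. So every local degree of $\bb{L}^{Z(G)}$ over $v$ is bounded by $[E:\bb{K}_v]$, and $\bb{L}$ is ADZ at $v$.

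I expect the main care to be needed in the localization step. We must check that the completion of an infinite compositum at a place is really the (union of) composita of local fields, and that "bounded local degree" is checked uniformly over all places above $v$. Taking the a priori bound $[E:\bb{K}_v]$, which is independent of the chosen place, handles both issues at once. A remark is also in order: Lemma \ref{lemma:centrocomposito} is stated for Galois extensions, which the ADZ definition presupposes.
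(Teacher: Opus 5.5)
Your proposal is correct and follows essentially the same route as the paper. You use Lemma \ref{lemma:centrocomposito} to identify $\bb{L}^{Z(G)}$ with the compositum of the fields $\bb{L}_i^{Z(G_i)}$, localize at $v$, and invoke the finiteness of extensions of $\bb{K}_v$ of bounded degree (the paper cites Lang, Ch.~II, Prop.~14, the Krasner-lemma consequence you use); bounding every completion by the compositum $E$ of all such extensions is a slightly more careful way of making the bound independent of the chosen place than the paper's choice of a single ``witnessing'' valuation $w$.
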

\begin{proof}
    Let $\bb{L}$ be the compositum of all the $\bb{L}_i$ and put $G=\Gal(\bb{L}/\bb{K})$ and $G_i=\Gal(\bb{L}_i/\bb{K})$ for each $i\in I$. By Lemma \ref{lemma:centrocomposito} we have that $\bb{L}^{Z(G)}$ is the compositum of all the $\bb{L}_i^{Z(G_i)}$. Let $w$ be a valuation on $\bb{L}^{Z(G)}$ extending $v$ such that the restriction of $w$ to $\bb{L}^{Z(G_i)}_i$ is a valuation witnessing that $\bb{L}_i$ has ADZ at $v$. Then taking the completion of $\bb{L}$ with respect to $w$ is the same as taking the compositum of each completion of $\bb{L}^{Z(G_i)}_{i}$ at $w|_{\bb{L}^{Z(G_i)}_{i}}$. By \cite[Proposition 14, chapter II]{langAnt} there are only finitely many field extensions of bounded degree of a finite extension of $\bb{Q}_p$. This means that the completion of $\bb{L}^{Z(G)}$ at $w$ is actually the compositum of finitely many fields with bounded degree, so it also has bounded degree.
\end{proof}
With the previous proposition we can easily prove that the representation $\rho_{f}^{(p)}$ has (B) when restricted to the absolute Galois group of ADZ field at a rational prime $p$.
\begin{proposition}\label{prop:adz}
    Let $\bb{K}/\bb{Q}$ be an ADZ field at $p$, then $\bb{K}(\rho_{f|G_{\bb{K}}}^{(p)})$ has (B).
\end{proposition}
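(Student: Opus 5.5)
Let $\bb{M}=\bb{K}(\rho^{(p)}_{f|G_\bb{K}})$ and $G=\Gal(\bb{M}/\bb{K})$. The plan is to show that $\bb{M}$ is again an ADZ field at a place $w$ of $\bb{K}$ above $p$, and then to invoke \cite[Theorem 1.5]{ADZ14}, which says that ADZ fields have (B).

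Write $\bb{M}_0:=\bb{Q}(\rho_f^{(p)})$. Then $\bb{M}=\bb{K}\bb{M}_0$, and restriction identifies $G$ with a subgroup of $\Gal(\bb{M}_0/\bb{Q})\simeq\rho_f^{(p)}(G_\bb{Q})$. Concretely, $G\simeq\rho_f^{(p)}(G_\bb{K})$.

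\textbf{Step 1: a scalar image of Frobenius.} Each $\rho_{f,v}$ with $v\nmid p$ is unramified at $p$ by (P0). By (P1) the characteristic polynomial of $\rho_{f,v}(\frob_p)$ is $X^2+\chi(p)p^{k-1}$. By Cayley--Hamilton, $\rho_{f,v}(\frob_p)^2=-\chi(p)p^{k-1}\cdot \mathrm{Id}$, which is scalar for every $v\nmid p$. Hence $\rho_f^{(p)}(\frob_p^2)$ is a scalar matrix in $\prod_{v\nmid p}\gl_2(\mathcal{O}_{f,v})$, and so it is central in the image. Since $\bb{M}_0/\bb{Q}$ is unramified at $p$, the decomposition group at a place $\mathfrak{P}$ of $\bb{M}_0$ above $p$ is cyclic, generated by the Frobenius $\phi$. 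Every conjugate of $\phi$ also has scalar square, so $\phi^2$ and all its conjugates lie in $Z(\Gal(\bb{M}_0/\bb{Q}))$.

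\textbf{Step 2: transfer to $G$ and bound the local degrees.} Let $w$ be a place of $\bb{K}$ above $p$ for which $\bb{K}/\bb{Q}$ is ADZ, with bound $d_0$ for the local degrees of $\bb{K}^{Z(\Gal(\bb{K}/\bb{Q}))}$. I would show that $\bb{M}$ is ADZ at $w$ in two sub-steps.

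First consider the fixed field $\bb{M}^{Z(G)}$. Its completion at a place above $w$ is generated over $\bb{K}_w$ by the action of a decomposition group $D\subseteq G$. The elements of $G$ restrict to elements of $\Gal(\bb{M}_0/\bb{Q})$, and scalar matrices remain central in the subgroup $\rho_f^{(p)}(G_\bb{K})$. So for any $\sigma\in D$ that restricts on $\bb{M}_0$ to a power of $\phi$, the element $\sigma^{2}$ acts on $\bb{M}_0$ as a central scalar.

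The obstruction is that $D$ need not be cyclic, because $\bb{K}$ itself may be ramified and large at $p$. This is where the ADZ hypothesis on $\bb{K}$ is used. I would pass to $\bb{K}$ over $\bb{K}':=\bb{K}^{Z(\Gal(\bb{K}/\bb{Q}))}$, which has local degree at most $d_0$. Replacing $\bb{K}$ by the Galois closure is avoided as follows: one treats the compositum $\bb{K}\bb{M}_0$ over $\bb{Q}$ via Lemma \ref{lemma:centrocomposito}. The goal is that $(\bb{K}\bb{M}_0)^{Z}$ is contained in the compositum of $\bb{K}^{Z(\Gal(\bb{K}/\bb{Q}))}$ and $\bb{M}_0^{Z(\Gal(\bb{M}_0/\bb{Q}))}$. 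The latter field has local degree at most $2$ at $p$ by Step 1, since its local Galois group is a quotient of $\langle\phi\rangle/\langle\phi^2\rangle$. Theorem \ref{thm:compostoadz}, applied to the two ADZ fields $\bb{K}$ and $\bb{M}_0$ at $p$, then gives that $\bb{K}\bb{M}_0$ is ADZ over $\bb{Q}$ at $p$.

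\textbf{Step 3: conclude.} By \cite[Theorem 1.5]{ADZ14}, $\bb{K}\bb{M}_0=\bb{M}$ has (B).

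\textbf{Main obstacle.} Two points need care. First, Theorem \ref{thm:compostoadz} is stated for fields Galois over a common base, so I would take "ADZ field at $p$" to mean Galois over $\bb{Q}$, as the statement suggests. Second, one must check that the bounded-local-degree hypothesis for $\bb{M}_0^{Z}$ holds at every place above $p$, not just at one. This follows because all Frobenii at places above $p$ are conjugate to $\phi$, and the centrality in Step 1 holds for all of them.
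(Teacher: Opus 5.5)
Your proposal is correct and follows the paper's own argument. Because $a_p=0$, $\rho_f^{(p)}(\frob_p^2)$ is scalar, so the unramified field $\mathbb{Q}(\rho_f^{(p)})$ is ADZ at $p$ with local degree at most $2$. Its compositum with $\mathbb{K}$ is then ADZ over $\mathbb{Q}$ at $p$ by Theorem \ref{thm:compostoadz}, and \cite[Theorem 1.5]{ADZ14} gives (B). Your first sub-step of Step 2, which tries to prove ADZ over $\mathbb{K}$ at $w$ and is then abandoned, is unnecessary and can be dropped.
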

\begin{proof}
    The field $\bb{\Q}(\rho_{f}^{(p)})$ is unramified at $p$. Recall that since by assumption \ref{ass1} $a_p(f)=0$, for every $\mathfrak{p}$ of $\bb{\Q}(\rho_{f}^{(p)})$ over $p$ the characteristic polynomial of $\rho_{f}^{(p)}(\frob_\mathfrak{p})$ is $X^2+\chi(p)p^{k-1}$; this means that $\rho_{f}^{(p)}(\frob^2_\mathfrak{p})$ is scalar and in particular is central in $\Gal(\bb{\Q}(\rho_{f}^{(p)})/\bb{Q})$. Since the extension is unramified, a decomposition group at $\mathfrak{p}$ is generated by (the image under $\rho_{f}^{(p)}$ of) $\frob_\mathfrak{p}$. This implies that $\bb{\Q}(\rho_{f}^{(p)})$ has ADZ at $p$ and the compositum also has ADZ by Theorem \ref{thm:compostoadz}. Since every ADZ field has (B) by \cite[Theorem 1.5]{ADZ14}, the result follows.
\end{proof}
\begin{remark}\label{rem:adz}
     Using Theorem \ref{thm:compostoadz} that if $\bb{F}/\bb{Q}$ is an ADZ field at $p$ then the field $\bb{F}(\boldsymbol{\mu})=\bb{F}\bb{Q}^{ab}$ is ADZ since any abelian extension is trivially ADZ at every prime. Since ADZ fields have (B) we have that $\bb{F}(\boldsymbol{\mu})$ has (B). As remarked in the introduction, property (B) has been established in \cite{AD00} for the maximal abelian extension of $\bb{Q}$ and subsequently for the maximal abelian extension of number fields in \cite{AZ10}. In this direction one could therefore ask whether the maximal abelian extension of an ADZ field has (B), but this is not true as the following argument due to A. Plessis (private communication) shows. Since $\bb{Q}^{ab}$ is abelian so it clearly has the ADZ property. Consider now the extension $\bb{Q}^{ab}(2^{1/n})$ where of course $2^{1/n}$ denotes an $n$-th root of of 2. This extension is abelian since its Galois group is isomorphic to a subgroup of $\Gal(\bb{Q}(\zeta_n,2^{1/n})/\bb{Q}(\zeta_n))$ that is cyclic by Kummer theory. As a consequence we see that for all $n$, the element $2^{1/n}$ belongs to the maximal abelian extension of $\bb{Q}^{ab}$. This shows that the maximal abelian extension of $\Q^{ab}$ cannot have the Bogomolov property. Observe that one can make the same argument by considering any ADZ field $\bb{K}/\bb{Q}$ in place of $\bb{Q}$: the extension $\bb{K}(\boldsymbol{\mu})$ is again ADZ and taking $n$-th roots produces abelian extensions by Kummer theory. Again this shows that $\bb{K}(\boldsymbol{\mu})^{ab}$ cannot have (B). 
\end{remark}
Since every number field is trivially ADZ at every prime $p$, we have that $\bb{K}(\rho_{f|G_{\bb{K}}}^{(p)})$ has (B) for every number field $\bb{K}$ and this concludes the study of the representation {outside of $p$}. To deal with the representation $\rho_{f,p}$ we will rely on the description of Breuil of some special crystalline representations.
\subsection{Crystalline representations with character}
By restricting the local representation $\rho_{f,v}$ (with $v$ above $p$) to $G_{\Q_p}$ one obtains a two dimensional representation of $G_{\Q_p}$ which is crystalline according to \cite[Theorem 1.2.4 (ii)]{scholl1990motives}. Two dimensional crystalline representations of $G_{\Q_p}$ have been classified by \citeauthor{Bre03} in \cite[Proposition 3.1.1]{Bre03} and are of the form $V_{k^\prime,a,\omega}:=V_{k^\prime,a}\otimes\omega$ where $k^\prime\geq 2$ is an integer, $a$ is an element in the maximal ideal of the valuation ring of $\overline\Q_p$ and $\omega$ is a character
\begin{equation*}
    \omega: G_{\Q_p}\rightarrow\overline{\Z}_p^\times
\end{equation*}
that is the product of an unramified character by an integral power of the cyclotomic character at $p$ (a so called \emph{crystalline character}); these representations have Hodge-Tate weight $(0,k^\prime-1)$ and the characteristic polynomial of $\varphi$ on the associated filtered $\varphi$-module is $X^2-aX+\omega(p^{-1})^2p^{k^\prime-1}$.
\begin{remark}
    The notation $\omega(p)$ is a shorthand to denote the compositum of $\omega$ with the local Artin map (see Remark \ref{rmk:local}).
\end{remark}
On the other hand the characteristic polynomial of the image of a Frobenius element at $p$ (recall that by \hyperref[ass1]{assumption 1}, $p\nmid N)$ under $\rho_{f,v}$ is $X^2-a_pX+\chi(p)p^{k-1}$ and representations associated to modular forms of weight $k$ have Hodge-Tate weight $(0, k-1)$, (see \cite{faltings1987hodge}). By the aforementioned theorem of \citeauthor{scholl1990motives} the two polynomials coincide and being isomorphic, the representations  have the same weight so
\begin{equation}\label{eqn:nebentypus}
    k^\prime=k,\quad a=a_p,\quad \chi(p)=\omega(p^{-1})^2.
\end{equation}
By \hyperref[ass1]{assumption 1}, $a_p=0$ so $a=0$ and by the explicit description of $V_{k,0}$ in \cite[Proposition  3.1.2]{Bre03} we obtain the following
\begin{proposition}
    Assume that P0 and P1 of assumption
    \ref{ass1} hold, then for every prime $v$ of $\mathcal{O}_f$ above $p$, the restriction $\rho_{f,v\mid G_{\Q_p}}$ is isomorphic to the crystalline representation
    \begin{equation*}V_{k,0,\omega}\simeq\left(\operatorname{ind}^{G_{\Q_p}}_{G_{\Q_{q}}}\varepsilon_2^{k-1} \right)\otimes\mu_{\sqrt{-1}}\otimes \omega
    \end{equation*}
    where:
    \begin{itemize}
        \item $\varepsilon_2 : G_{\Q_q} \rightarrow \Z_q^\times$ is one of the two continuous characters characterized by the property that their composition with the injection $\Q^\times_q\rightarrow G^{ab}_{\Q_q}$ (given by local class field theory) is trivial on p and is one of the two natural embeddings when restricted to $\Z^\times_q$;
        \item $\mu_{\sqrt{-1}}$ is one of the two unramified characters of $G_{\Q_p}$ sending $\frob_p$ in $\sqrt{-1}$.
    \end{itemize}
\end{proposition}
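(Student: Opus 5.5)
The plan is to combine three inputs: the crystallinity of $\rho_{f,v}$, Breuil's classification, and the explicit form of $V_{k,0}$. First, by (P0) the prime $p$ does not divide $N$. Scholl's theorem \cite[Theorem 1.2.4 (ii)]{scholl1990motives} therefore applies and shows that $\rho_{f,v\mid G_{\Q_p}}$ is crystalline. Breuil's classification \cite[Proposition 3.1.1]{Bre03} then writes it as $V_{k',a}\otimes\omega$ for some crystalline character $\omega$. I would pin down the parameters by comparing two invariants:
\begin{itemize}
\item the Hodge--Tate weights, $(0,k-1)$ by Faltings, against $(0,k'-1)$, which gives $k'=k$;
\item the characteristic polynomial of $\varphi$ on $D_{\mathrm{cris}}$, against the characteristic polynomial $X^2-a_pX+\chi(p)p^{k-1}$ of the Frobenius at $p$.
\end{itemize}
Scholl's compatibility identifies these two polynomials. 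This yields \eqref{eqn:nebentypus}, and by (P1) it gives $a=0$.

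Second, I would apply \cite[Proposition 3.1.2]{Bre03}, which for $a=0$ describes $V_{k,0}$ as an induced representation. The precise statement there is $V_{k,0}\simeq \operatorname{ind}^{G_{\Q_p}}_{G_{\Q_{q}}}\varepsilon_2^{k-1}\otimes\mu_{\sqrt{-1}}$, with $\Q_q$ the unramified quadratic extension of $\Q_p$. Tensoring with $\omega$ and using that tensor product with a character commutes with the isomorphism gives the stated formula for $V_{k,0,\omega}$.

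As a consistency check, I would verify the determinant and the Frobenius data of the right-hand side directly. On the induced part, the square of a Frobenius lift acts through $\varepsilon_2^{k-1}$ evaluated at $p$ (which is trivial by the normalization of $\varepsilon_2$) and through the twisting characters. The resulting characteristic polynomial of Frobenius is $X^2+p^{k-1}\omega(p^{-1})^2$ up to the normalization conventions, which matches $X^2+\chi(p)p^{k-1}$ by \eqref{eqn:nebentypus}.

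The main obstacle I expect is bookkeeping of conventions: normalizing $\omega(p)$ through the local Artin map, and matching Breuil's covariant or contravariant conventions for $D_{\mathrm{cris}}$ with the geometric versus arithmetic Frobenius used in $\rho_{f,v}$. These could introduce an inverse or a dual, replacing $\omega$ by $\omega^{-1}$ or $k-1$ by $1-k$ after a cyclotomic twist. I would fix these by checking Hodge--Tate weights and the determinant $\varepsilon_p^{k-1}(\chi\circ\varepsilon_p)$ on both sides. The determinant of the right-hand side is $\varepsilon_p^{k-1}\omega^2$ times an unramified sign, so it must equal $\varepsilon_p^{k-1}\chi$ restricted to $G_{\Q_p}$. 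Since $\chi$ is unramified at $p$, this forces the unramified part of $\omega^2$ to be the one dictated by \eqref{eqn:nebentypus}, which settles any remaining ambiguity.
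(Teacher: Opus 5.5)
Your proposal is correct and follows the paper's argument essentially verbatim: crystallinity from Scholl, then Breuil's classification with $k'=k$ and $a=a_p=0$ read off from the Hodge--Tate weights and the $\varphi$-characteristic polynomial (which yields \eqref{eqn:nebentypus}), and finally \cite[Proposition 3.1.2]{Bre03} tensored with $\omega$. One caveat concerns your optional consistency check, which the proof does not rely on: for a Frobenius lift $F$, the element $F^2$ acts on the induced part through the unit $\varepsilon_2^{k-1}(F^2)$, so the Galois-side characteristic polynomial of $F$ on the right-hand side has unit constant term and cannot equal $X^2+\chi(p)p^{k-1}$ under any normalization; the factor $p^{k-1}$ appears only for $\varphi$ on $D_{\mathrm{cris}}$, where $\varphi^2$ acts on $D_{\mathrm{cris}}(\varepsilon_2^{k-1})$ by $p^{\pm(k-1)}$, so that is where the check has to be made.
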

\noindent Moreover \cite[Proposition 3.1.1]{Bre03} says that the only isomorphisms between these representations are $V_{k,a_p,\omega}\simeq V_{k,-a_p,\omega\mu_{\sqrt{-1}}}$, so if $a_p=0$ the only isomorphisms are $V_{k,0,\omega}\simeq V_{k,0,\omega\mu_{\sqrt{-1}}}$.
\begin{remark}\label{rmk:local}
    We give a more detailed description of the character $\varepsilon_2$ following the notations in \cite[Chapter I, Section 3]{milneCFT}. Let $K$ be a finite Galois extension of $\Q_p$. Denoting by $U_K$ the units of $K$ we have $K^\times=U_K^\times\cdot \pi^\Z\simeq U_K\times \Z$ as groups. By local class field theory there is a homomorphism (the local Artin map)
    \begin{equation*}
        \phi : \Q_p^\times/\operatorname{Nm}(K^\times)\rightarrow \Gal(K/\Q_p)^{ab}
    \end{equation*}
    that induces an isomorphism $\widehat{\phi}$ between the completion $\widehat{K^\times}$ with respect to the topology for which the norm groups form a fundamental system of neighbourhoods of the identity and $\Gal(K^{ab}/K)$. Taking $K=\Q_q$ we define $\varepsilon_2$ using the following diagram
    \begin{equation*}
        \begin{tikzcd}
            {G_{\mathbb{Q}_q}} & {G_{\mathbb{Q}_q}^{ab}} & {\widehat{\mathbb{Q}_q^\times}\simeq \mathbb{Z}_q^\times\times\widehat{\mathbb{Z}}} \\
            && {\mathbb{Z}_q^\times}
            \arrow[two heads, from=1-1, to=1-2]
            \arrow["{\varepsilon_2}"', from=1-1, to=2-3]
            \arrow["{\hat{\phi}}", from=1-2, to=1-3]
            \arrow[from=1-3, to=2-3]
        \end{tikzcd}
    \end{equation*}
   Composing $\varepsilon_2$ with the nontrivial automorphism of $\mathbb{Z}_q^\times$ fixing $\mathbb{Z}_p^\times$ we get the other character $\varepsilon_2^\prime$.
\end{remark}
In the case of trivial nebentypus the previous proposition is \cite[Proposition 4.1]{AT25}. Observe that by Mackey's restriction formula
\begin{equation*}
    \left(\operatorname{ind}^{G_{\Q_p}}_{G_{\Q_{q}}}\varepsilon_2^{k-1}\right)_{\mid G_{\Q_q}}=\varepsilon_2^{k-1}\oplus \varepsilon_2^{\prime \ k-1}
\end{equation*}
where $\varepsilon^\prime_2$ is the character obtained by composing with the algebraic conjugation of $\Q_q/\Q_p$ so that
\begin{equation*}
    {V_{k,0,\omega}}_{\mid G_{\Q_q}}\simeq (\varepsilon_2^{k-1}\oplus \varepsilon_2^{\prime \ k-1})_{\mid G_{\Q_p}}\otimes {\mu_{\sqrt{-1}}}_{\mid G_{\Q_q}}\otimes \omega_{\mid G_{\Q_q}}.
\end{equation*}
By counting Hodge-Tate weights we see that $\omega$ has weight 0 so there is no power of the cyclotomic character and $\omega$ is unramified. This implies that $\omega$ factors through
\begin{equation*}
    \omega:\Gal(\Q_p^{ur}/\Q_p)\rightarrow \overline{\Z}_p^\times
\end{equation*}
where $\Q_p^{ur}$ is the maximal unramified extension of $\Q_p$. Since $\Gal(\Q_p^{ur}/\Q_p)$ is isomorphic to $\Gal(\overline{\mathbb{F}}_p/\mathbb{F}_p)$ then $\omega$ is characterized by its value at the Frobenius element. By local class field theory $\omega({\frob_p}_{\mid \Q_p^{ur}})=\omega(p)$ and the last relation in \eqref{eqn:nebentypus} shows that $\omega$ has finite image, since $\chi$ is a Dirichlet character. The image of the representation $\rho_{f,v}$ is a priori a subset of $\gl_2(\bb{K}_{f,v})$ but one can do better. In fact a result of Carayol states that one can always \say{descend} to a representation which is equivalent after extension of scalars to a representation in the ring the traces. This will be used in the proof of the normal closure lemma.
\begin{proposition}\label{prop:car}
    Under \hyperref[ass1]{assumption 1}, up to conjugation by an element $\operatorname{GL}_2(\mathcal{O}_{f,v})$ the representation $\rho_{f,v}$ restricted to $G_{\Q_p}$ has image in $\operatorname{GL}_2(R)$ where $R$ is the finite extension of $\Z_p$ obtained by adjoining the image of $\omega$.
\end{proposition}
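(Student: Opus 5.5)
The plan is to combine Breuil's explicit description with Carayol's descent theorem. By the previous proposition, $\rho_{f,v\mid G_{\Q_p}}$ is isomorphic over $\overline{\Q}_p$ to
\[
V_{k,0,\omega}\simeq\operatorname{ind}^{G_{\Q_p}}_{G_{\Q_q}}\varepsilon_2^{k-1}\otimes\mu_{\sqrt{-1}}\otimes\omega .
\]
The proof then has three steps.

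\emph{Step 1: control the traces.} I would show that the ring of traces of this local representation lies in a finite extension $R$ of $\Z_p$ generated by the image of $\omega$. For $\sigma\notin G_{\Q_q}$ the trace of the induced representation vanishes. For $\sigma\in G_{\Q_q}$ the trace is
\[
\bigl(\varepsilon_2^{k-1}(\sigma)+\varepsilon_2'^{\,k-1}(\sigma)\bigr)\,\mu_{\sqrt{-1}}(\sigma)\,\omega(\sigma).
\]
The first factor is a $\Gal(\Q_q/\Q_p)$-invariant element of $\Z_q$, hence lies in $\Z_p$. On $G_{\Q_q}$ the unramified character $\mu_{\sqrt{-1}}$ takes values in $\{\pm1\}$, since $\frob_p^2\mapsto -1$. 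The remaining factor $\omega(\sigma)$ lies in the image of $\omega$, which is finite by \eqref{eqn:nebentypus}, since $\omega(p)^{-2}=\chi(p)$ is a root of unity. So $R=\Z_p[\operatorname{im}\omega]$ is a finite, hence complete local, extension of $\Z_p$, and all traces lie in it. To be careful, I would replace $\omega$ by $\omega\mu_{\sqrt{-1}}$ using the isomorphism $V_{k,0,\omega}\simeq V_{k,0,\omega\mu_{\sqrt{-1}}}$ if needed, so that $R$ is literally generated by the image of the character appearing in the statement.

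\emph{Step 2: apply Carayol.} Next I would invoke Carayol's theorem: if $A$ is a complete local ring with residue field $k$, $A'\supseteq A$ is a complete local extension, and $\rho\colon G\to\gl_2(A')$ is residually absolutely irreducible with all traces in $A$, then $\rho$ is conjugate by an element of $\gl_2(A')$ to a representation with values in $\gl_2(A)$. For this I would take $A'=\mathcal{O}_{f,v}$, enlarged to $\mathcal{O}_{f,v}R$ if necessary. Since $\chi(p)$ and $\omega$ are related by \eqref{eqn:nebentypus}, I expect $R\subseteq\mathcal{O}_{f,v}$ up to a finite unramified extension; one should check this or simply work in the compositum. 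I would take $A=R$.

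\emph{Step 3: residual absolute irreducibility.} This is the main obstacle: Carayol's theorem needs the reduction of $V_{k,0,\omega}$ modulo the maximal ideal to be absolutely irreducible. The reduction is the induction of $\overline{\varepsilon}_2^{\,k-1}$, twisted by a character, where $\overline{\varepsilon}_2$ is the fundamental character of level $2$, of order $p^2-1$. The induction is absolutely irreducible exactly when $\overline{\varepsilon}_2^{\,k-1}\neq\overline{\varepsilon}_2'^{\,k-1}=\overline{\varepsilon}_2^{\,p(k-1)}$. That is, we need $(p^2-1)\nmid(p-1)(k-1)$, or equivalently $(p+1)\nmid(k-1)$. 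This follows from (P3), since $\frac{p+1}{2}\nmid k-1$ implies $p+1\nmid k-1$. The twist by $\mu_{\sqrt{-1}}\omega$ does not affect irreducibility.

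Finally, since the conjugating matrix produced by Carayol can be chosen in $\gl_2(\mathcal{O}_{f,v})$ (both lattices are stable and residually irreducible, so they are homothetic), conjugating $\rho_{f,v}$ itself gives image in $\gl_2(R)$, which is the claim.
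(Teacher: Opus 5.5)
Your proposal is essentially the paper's proof. Both combine Breuil's description, absolute irreducibility of the residual induced representation from $p+1\nmid k-1$ (which you correctly extract from (P3)), and Carayol's descent; the only difference is cosmetic: the paper applies Carayol to the untwisted $V_{k,0}$, whose traces lie in $\Z_p$, and then tensors with the scalar-valued $\omega$, while you descend the twisted representation directly to $R$. One small repair: your closing homothety remark does not justify choosing the conjugator in $\gl_2(\mathcal{O}_{f,v})$ when $R\not\subseteq\mathcal{O}_{f,v}$ (which is not guaranteed, since $\chi(p)$ only pins down $\omega(\frob_p)^2$), so instead apply Carayol with $A=R\cap\mathcal{O}_{f,v}$, which still contains all traces by your Step 1, and $A'=\mathcal{O}_{f,v}$; this directly gives a conjugator in $\gl_2(\mathcal{O}_{f,v})$ and image in $\gl_2(A)\subseteq\gl_2(R)$.
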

\begin{proof}
Recall that $\rho_{v{\mid G_{\Q_p}}}$ is isomorphic to $V_{k,0,\omega}$. By \cite[Lemma 2.1]{calegari2021vanishing} since $p+1\nmid k+1$ the representation $V_{k,0}$ is absolutely irreducible so we can then apply \cite[Theorem 2]{carayol1994formes} and find an element $A\in\gl_2(\mathcal{O}_{f,v})$ such that $A V_{k,0} A^{-1}$ has image in $\gl_2(\Z_p)$ since the traces lie in $\Z_p$. Since $V_{k,0,\omega}=V_{k,0}\otimes \omega$ and $\omega$ has image in $\overline{\bb{Z}_p^\times}$ we have $AV_{k,0,\omega}A^{-1}=AV_{k,0}A^{-1}\otimes \omega$ which proves the claim.
\end{proof}
\begin{remark}\label{rem:img}
    Throughout the rest of the paper we will denote by ${\rho}_{f,v\mid G_{\Q_p}}\simeq V_{k,0,\omega}$ the representation given by \ref{prop:car}. This will have image in $\gl_2(R)$ and denoting with $\omega^*$ the dual of $\omega$ the representation $\rho_{f,v\mid G_{\Q_p}}\otimes \omega^*\simeq V_{k,0}$ will have image in $\gl_2(\Z_p)$.
\end{remark}
Let now be $\bb{K}$ the abelian extension of $\bb{Q}$ fixed by all the Dirichlet characters associated to the inner twists of $f$, as in the paragraph about inner twists in section 4. Let 
$\bb{K}_{\mathfrak{p}}$ be the completion of $\bb{K}$ at a place $\mathfrak{p}$ above ${p}$ given by the immersion $\bb{Q}_p\hookrightarrow\overline{\Q}_p$. We have now
\begin{lemma}\label{lemma:fiss}
    The field $\K_\mathfrak{p}$ contains $\Q_p(\omega)$.
\end{lemma}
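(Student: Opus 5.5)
Since $\omega$ is unramified with finite image, $\Q_p(\omega)$ is the finite unramified extension of $\Q_p$ generated by $\omega(\frob_p)$, a root of unity of order prime to $p$ (up to a $p$-power part, which I have to rule out). The plan is to compare this root of unity with data visible in $\bb{K}_\mathfrak{p}$, namely the values of the nebentypus $\chi$. By \eqref{eqn:nebentypus}, $\omega(\frob_p)^{-2}=\chi(p)$. By Remark \ref{rmk:neben}, $\chi$ (and $\overline{\chi}$) belongs to $\Omega$, the set of characters attached to inner twists. So $\bb{K}$, the field cut out by $\Omega$, contains the field cut out by $\chi$, and hence contains $\Q(\chi(\ell))$ for all $\ell$ in the sense relevant here. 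More to the point, $\bb{K}$ is unramified at $p$ and contains the values of $\chi$ in its completions.

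The first concrete step is to identify $\Q_p(\omega)=\Q_p(\omega(\frob_p))$. The second is to show that $\omega(\frob_p)$ lies in $\bb{K}_\mathfrak{p}$, using that $\bb{K}_\mathfrak{p}/\Q_p$ is unramified of degree equal to the order of $\frob_p$ in $\Gal(\bb{K}/\Q)$. That order equals the order of the subgroup of $\Omega$ generated by the values $\psi(p)$ for $\psi\in\Omega$. The unramified extension of $\Q_p$ of degree $m$ contains exactly the roots of unity of order dividing $p^m-1$. It therefore suffices to show that the order $d$ of $\omega(\frob_p)$ is prime to $p$ and that the residue degree $f$ of $\mathfrak{p}$ satisfies $\mathbb{F}_{p^f}^\times\ni\overline{\omega(\frob_p)}$. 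Equivalently, $\Q_p(\omega(\frob_p))$, being unramified of degree $[\Q_p(\mu_d):\Q_p]=\operatorname{ord}_d(p)$, must have degree dividing $f$.

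The main obstacle is controlling $\omega(\frob_p)$ through its square only, since \eqref{eqn:nebentypus} gives $\chi(p)$ and not $\omega(\frob_p)$ itself, and $\omega$ is defined only up to $\mu_{\sqrt{-1}}$. I would try to choose $\omega$ in its isomorphism class $V_{k,0,\omega}\simeq V_{k,0,\omega\mu_{\sqrt{-1}}}$ conveniently, and get the needed extra information from the determinant. We have $\det\rho_{f,v}=\varepsilon_p^{k-1}\chi$ on $G_{\Q_p}$, while $\det V_{k,0,\omega}=\det(\operatorname{ind}\varepsilon_2^{k-1})\cdot\mu_{\sqrt{-1}}^2\omega^2$. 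The induced determinant is $\varepsilon_p^{k-1}$ times an unramified quadratic sign, so this reproduces the relation $\omega^2=\pm\chi$ as unramified characters. Since $p$ is odd, $\omega(\frob_p)=\pm\sqrt{\pm\chi(p)^{-1}}$ has order prime to $p$. The delicate point is whether the extraction of this square root forces a quadratic unramified extension not already in $\bb{K}_\mathfrak{p}$. I would handle it by using the inner twist by $\chi$ itself: $f^{c}=f\otimes\overline{\chi}$ via complex conjugation. Comparing Frobenius at $p$ for $\rho_{f,v}$ and its twist, with characteristic polynomial $X^2+\chi(p)p^{k-1}$, shows that $\frob_p$ acts on $\bb{K}$ with order at least the order of $\chi(p)$ times the extra factor $2$ coming from $\sqrt{-\chi(p)}$ eigenvalues. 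This would give $[\bb{K}_\mathfrak{p}:\Q_p]$ divisible by the degree of $\Q_p(\omega(\frob_p))$. If that fails, I would fall back on enlarging $\bb{K}$ by a quadratic unramified extension, which does not affect the later arguments.
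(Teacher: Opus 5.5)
There is a genuine gap, and it is exactly at the point you single out as delicate. You claim that comparing $\rho_{f,v}(\frob_p)$ with its twist by $\overline{\chi}$ forces $\frob_p$ to have order at least $2\operatorname{ord}(\chi(p))$ on $\K$, but nothing supports this. As you note yourself, $f:=[\K_\mathfrak{p}:\Q_p]$ is the l.c.m.\ of the orders of the $\psi(p)$ for $\psi\in\Omega$. The Frobenius eigenvalues at $p$ never enter the definition of $\K$: with $a_p=0$, the relations $\gamma(a_p)=\chi_\gamma(p)a_p$ say nothing at $p$.

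In fact the claim fails in general, and with it the statement.
\begin{itemize}
\item There is no stray sign in your relation $\omega^2=\pm\chi$. In $\det V_{k,0,\omega}$, the unramified quadratic character coming from $\det\operatorname{ind}\varepsilon_2^{k-1}$ cancels against $\mu_{\sqrt{-1}}^2=\mu_{-1}$. So $\omega^2=\chi|_{G_{\Q_p}}$, up to the normalization of Frobenius.
\item The only freedom in $\omega$ is $\omega\mapsto\omega\mu_{-1}$, because $\omega^2$ is pinned down by the determinant. Twisting by $\mu_{\sqrt{-1}}$ changes the determinant, so $V_{k,0,\omega}\not\simeq V_{k,0,\omega\mu_{\sqrt{-1}}}$.
\item Suppose complex conjugation is the only inner twist (e.g.\ $f$ non-CM with imaginary quadratic Hecke field). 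Then $\K$ is the field cut out by $\chi$, and $f=m:=\operatorname{ord}\chi(p)$.
\item If $m$ is even, every square root of $\chi(p)^{\pm1}$ has order $2m$. Hence $\omega(\frob_p)^f=-1=(\omega\mu_{-1})(\frob_p)^f$, and no admissible $\omega$ is trivial on $G_{\K_\mathfrak{p}}$.
\end{itemize}
This seems to occur for the entry 24.2.d.a, $p=61$, of the table in the last section: there $\chi(61)=-1$, $\K=\Q(\sqrt2)$, and $61$ is inert.

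The paper's proof argues with kernels: $\overline{\chi}\in\Omega$ gives $\chi|_{G_\K}=1$, hence $\omega^2|_{G_{\K_\mathfrak{p}}}=1$. It then removes the remaining sign by passing to $V_{k,0,\omega\mu_{\sqrt{-1}}}$. With the correct ambiguity $\mu_{-1}$, this last step works only when $f$ is odd, so it does not close the gap either. Your fallback is to enlarge $\K$ to an abelian field unramified at $p$ in which $p$ has residue degree divisible by $2m$. That is the right repair for the later arguments, but it changes the lemma rather than proving it.

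Two smaller points. First, you identify $\Q_p(\omega)$ with $\Q_p(\omega(\frob_p))$, which is unramified of degree $\operatorname{ord}_d(p)$ with $d=\operatorname{ord}\omega(\frob_p)$. What the paper proves, and later uses to kill $\omega$ on $G_L$, is $G_{\K_\mathfrak{p}}\subseteq\ker\omega$. That says the field cut out by $\omega$ (unramified of degree $d$) lies in $\K_\mathfrak{p}$, i.e.\ $d\mid f$. Your reduction aims at $\operatorname{ord}_d(p)\mid f$, while your factor-$2$ argument aims at $d\mid f$; neither condition implies the other. Second, ``since $p$ is odd, $\omega(\frob_p)$ has order prime to $p$'' does not follow. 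The order of $\chi(p)$ can be divisible by $p$ when $p$ divides the order of $\chi$. This is harmless for the kernel formulation, but in your formulation it would make $\Q_p(\omega(\frob_p))$ ramified.
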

\begin{proof}
    By definition we have that $G_\K=\bigcap_{\chi_\gamma\in\Omega}\ker \chi_\gamma$. By remark \ref{rmk:neben} the complex conjugate of the nebentypus of $f$ belongs to $\Omega$ so for all $\sigma\in G_\K$ we have $\chi(\sigma)=1$. Since $\K_\mathfrak{p}$ is unramified at $p$, it is contained in $\Q_p^{ur}$ and each element of $\Gal(\Q_p^{ur}/\Q_p)$ is a power of a Frobenius element at $p$. Through the immersion of $\overline{\Q}$ in $\overline{\Q}_p$ we can identify $G_{\K_\mathfrak{p}}$ as a subgroup of $G_\K$ (more specifically a decomposition group at $\mathfrak{p}$). For all $\sigma\in G_{\K_\mathfrak{p}}$ we have
    \begin{equation*}
        1=\chi(\sigma)=\chi(\frob^n_{\mid \K_\mathfrak{p}})=\chi(\theta(p^n)_{\mid \K_\mathfrak{p}}).
    \end{equation*}
    By the last equality of \eqref{eqn:nebentypus} we have
    \begin{equation*}
        \chi(\theta(p^n)_{\mid \K_\mathfrak{p}})=\omega(\theta(p^{-n})_{\mid \K_\mathfrak{p}})^2=\omega(\sigma^{-1})^2.
    \end{equation*}
    Up to a choice of the isomorphic representation $V_{k,0,\omega\mu_{\sqrt{-1}}}$ (see right above remark \ref{rmk:local}) we have that $\omega(\sigma)=1$. This proves that $G_\bb{K_\mathfrak{p}}$ is contained in the kernel of $\omega$ which proves the claim.
\end{proof}
Since $\bb{K}_p$ is unramified the compositum $L=\K_\mathfrak{p}\Q_q$ is again a finite and unramified extension of $\Q_q$ and by Lemma \ref{lemma:fiss} the restriction of $\rho_{f,v}$ to $G_L$ kills the crystalline character i.e.
\begin{equation*}
    {V_{k,0,\omega}}_{\mid G_{L}}\simeq (\varepsilon_2^{k-1}\oplus \varepsilon_2^{\prime \ k-1})_{\mid G_{L}}\otimes {\mu_{\sqrt{-1}}}_{\mid G_{L}}.
\end{equation*}
The extension $L/\Q_q$ is unramified while $\Q_q(\varepsilon_p^{k-1})/\Q_q$ is totally ramified.
\begin{figure}[h]
    \centering
    \begin{tikzcd}
                                          & L(\varepsilon_p^{k-1}) \arrow[rd, no head] &                                                       \\
L \arrow[rd, no head] \arrow[ru, no head] &                                            & \mathbb{Q}_q(\varepsilon_p^{k-1}) \arrow[ld, no head] \\
                                          & \mathbb{Q}_q                               &                                                      
\end{tikzcd}
\end{figure}
By \cite[Lemma 2.1,(iii)]{Ha13} we have that for each $i\geq -1$ the map
\begin{equation}\label{eqn:hab}
    \Gal(L(\varepsilon_p^{k-1})/L)\cap G_i(L(\varepsilon_p^{k-1}) /\Q_q)\rightarrow G_i(\Q_q(\varepsilon_p^{k-1})/\Q_q)
\end{equation}
is an isomorphism. Since $L/\Q_q$ is unramified all the ramification of $L(\varepsilon_p^{k-1})/\Q_q$ is \say{above} $L$ meaning that for $i\geq 0$ we have $G_i(L(\varepsilon_p^{k-1})/\Q_q)=G_i(L(\varepsilon_p^{k-1})/L)$. Using \eqref{eqn:hab} we have an isomorphism
\begin{equation*}
    G_i(L(\varepsilon_p^{k-1})/L)\simeq G_i(\Q_q(\varepsilon_p^{k-1})/\Q_q)
\end{equation*}
Assume that $p\mathcal{O}_f=\prod_{v\mid p}\mathfrak{p}_v^{e_v}$ and define for each $v$ above $p$ the representation $\rho_{f,v}(\mathfrak{p}_v)$ as the composition
\begin{equation*}
    G_\Q\rightarrow \gl_2(\mathcal{O}_{f,v})\rightarrow \gl_2(\mathcal{O}_{f,v}/\mathfrak{p}_v).
\end{equation*}
Define also $$\rho_p(p^n)=\prod_{v\mid p}\rho_{f,v}(\mathfrak{p}_v^{e_n n}):G_\Q\rightarrow \prod_{v\mid p} \gl_2(\mathcal{O}_{f,v}/\mathfrak{p}_v^{e_n n})=\gl_2(\mathcal{O}_f/p^n\mathcal{O}_f)$$ and denote $L(p^n)$ the field cut out by $\rho_p(p^n)_{\mid G_{L}}$
We have now the analogue of \cite[Proposition 4.7]{AT25} in the case of nontrivial nebentypus.
\begin{proposition}\label{Prop:gal}
Let $n \geq 1$ and $L$ as above.
\begin{enumerate}
    \item[(i)] The extension $L(p^n)/L$ is totally ramified and abelian of degree $\delta q^{n-1}$. Moreover putting $\delta=\frac{q-1}{(q-1,k-1)}$ we have
    \begin{equation*}
        \Gal\left(L(p^n)/L\right) \simeq \Z/\delta\Z \times \left(\Z/p^{n-1}\Z\right)^2
    \end{equation*}
    \item[(ii)] Let $j$ and $i$ be integers with $1 \leq j \leq n$ and $q^{j-1} \leq (q-1,k-1)i \leq q^j - 1$. The higher ramification groups are given by
    \begin{equation*}
        G_i(L(p^n)/L) = \Gal(L(p^n)/L(p^j)).
    \end{equation*}
    In particular, the last nontrivial higher ramification group has index
    \begin{equation*}
        i_n := \frac{q^{n-1} - 1}{(q-1,k-1)}
    \end{equation*}
    and
    \begin{equation*}
        G_{i_n} = 
        \begin{cases}
            (\Z/p\Z)^2 & \text{if } n \geq 2 \\
            \Z/\delta\Z & \text{if } n = 1.
        \end{cases}
    \end{equation*}
    \item[(iii)] If $M$ is an integer prime to $p$ and $v \mid p$, then the image of $\rho_{v|G_{L}}$ contains the scalar matrix $\begin{pmatrix} M^{k-1} & 0 \\ 0 & M^{k-1} \end{pmatrix}$.
\end{enumerate}
\end{proposition}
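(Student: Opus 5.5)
The plan is to reduce everything to the explicit description of $\rho_{f,v\mid G_L}$ obtained above. Since $\omega$ dies on $G_L$, the restriction is
\[
(\varepsilon_2^{k-1}\oplus\varepsilon_2^{\prime\,k-1})\otimes\mu_{\sqrt{-1}}.
\]
The unramified character $\mu_{\sqrt{-1}}$ only contributes an unramified extension, which disappears after a further harmless unramified enlargement of $L$. Alternatively, one can observe that $\mu_{\sqrt{-1}}$ is trivial on $G_L$ whenever $[L:\Q_p]$ is divisible by $4$; since $L$ is unramified we may assume this. After this reduction the image of $G_L$ is the image of the diagonal character $(\varepsilon_2^{k-1},\varepsilon_2^{\prime\,k-1})$. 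By local class field theory this is the image of $U_L\to\Z_q^\times$, namely $N_{L/\Q_q}$ followed by $u\mapsto(u^{k-1},\bar u^{k-1})$. Because $L/\Q_q$ is unramified, the norm on units is surjective onto $\Z_q^\times$, so the image is $\{(u^{k-1},\bar u^{k-1}) : u\in\Z_q^\times\}$. Reducing modulo $p^n$ (for each $v\mid p$, compatibly via Proposition~\ref{prop:car}, so that the product over $v$ does not enlarge the field) shows that $\Gal(L(p^n)/L)$ is isomorphic to the image of $(\Z_q/p^n)^\times$ under $u\mapsto u^{k-1}$.

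For (i), the group $(\Z_q/p^n)^\times$ is isomorphic to $\mu_{q-1}\times(1+p\Z_q)/(1+p^n\Z_q)$, and the second factor is isomorphic to $(\Z/p^{n-1})^2$. Raising to the power $k-1$ is bijective on the pro-$p$ part because $p\nmid k-1$ by (P3). On $\mu_{q-1}$ it has image of order $\delta=(q-1)/(q-1,k-1)$. This gives the group structure and the degree $\delta q^{n-1}$. Total ramification holds because the extension is cut out by a character of the units alone: the image of inertia $U_L$ is everything, and the image of the uniformizer is trivial by construction of $\varepsilon_2$.

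For (ii), I would compute the upper numbering filtration via class field theory. The filtration $U_L^{(j)}=1+p^j\mathcal O_L$ maps onto $\Gal(L(p^n)/L(p^j))$, so the upper jumps occur at the integers $j=1,\dots,n-1$, with $G^0$ equal to the whole group. I would then convert to lower numbering with Herbrand's function $\psi$. The index of $G^0$ over $G^1$ is $\delta$, and each later step has index $q=p^2$. Consequently the breaks in lower numbering are at $\delta(1+q+\dots+q^{j-1})$, up to the normalization used. Writing $\delta=(q-1)/(q-1,k-1)$, this equals $(q^j-1)/(q-1,k-1)$, which yields the stated intervals and $i_n$. The last nontrivial group is $\Gal(L(p^n)/L(p^{n-1}))$, which is isomorphic to $(\Z/p)^2$ for $n\ge2$ and to the tame quotient $\Z/\delta$ for $n=1$. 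Alternatively, I could transport the known computation for $\Q_q(\varepsilon_2)$, as in \cite[Proposition 4.7]{AT25}, through the isomorphism \eqref{eqn:hab} of lower ramification groups. This is the route I would take to avoid redoing Herbrand bookkeeping.

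For (iii), the element $M\in\Z_p^\times\subset\Z_q^\times$ is fixed by conjugation, so the pair $(M^{k-1},\bar M^{k-1})=(M^{k-1},M^{k-1})$. By the surjectivity noted above this pair lies in the image, which gives the scalar matrix. I expect the main obstacle to be the precise index bookkeeping in (ii), and in particular checking that the twist by $\mu_{\sqrt{-1}}$ and the product over the several $v\mid p$ do not alter the ramification filtration. I would handle the product over $v$ by noting that every factor is cut out by the same pair of characters $(\varepsilon_2^{k-1},\varepsilon_2^{\prime\,k-1})$.
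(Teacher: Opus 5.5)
\textbf{The gap is your treatment of $\mu_{\sqrt{-1}}$.} The field $L=\K_\mathfrak{p}\Q_q$ is fixed by the statement. So you may not ``assume'' $4\mid[L:\Q_p]$, nor pass to a further unramified extension of $L$: either move replaces $L(p^n)$ by a different field. The two assertions of (i) that are sensitive to this are exactly total ramification and the order $\delta q^{n-1}$.

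What has to be checked is the following. Since $L\supseteq\Q_q$, $[L:\Q_p]$ is even, so $\mu_{\sqrt{-1}}|_{G_L}$ is either trivial or the unramified quadratic character of $L$. On $\Q_q^{ab}$, $\operatorname{Art}_L(p)$ restricts to $\operatorname{Art}_{\Q_q}(p^{[L:\Q_q]})$, so both $\varepsilon_2$ and $\varepsilon_2^\prime$ kill it. Hence $\rho_{f,v}(\operatorname{Art}_L(p))=(\sqrt{-1})^{[L:\Q_p]}I=\pm I$. It follows that $L(p^n)/L$ is totally ramified, with Galois group the inertia image $\{(u^{k-1},\bar u^{k-1}):u\in\Z_q^\times\}$ mod $p^n$, if and only if this scalar lies in that inertia image.

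For $k$ even it does, taking $u=-1$. This is how total ramification survives over $\Q_q$ in \cite{AT25}: there $\mu_{\sqrt{-1}}$ is nontrivial on $G_{\Q_q}$, but trivial nebentypus forces $k$ even. With nontrivial nebentypus $k$ may be odd, since $\chi(-1)=(-1)^k$. Then you need $-1\in(\Z_q^\times)^{k-1}$, i.e.\ $(q-1,k-1)\mid\frac{q-1}{2}$. Hypothesis (P3) does not guarantee this: for $p=5$, $k=9$ one gets $(24,8)=8\nmid 12$. The paper's one-line appeal to \cite{AT25} passes over the same point. You should verify this condition, or add it as a hypothesis, rather than assume it away.

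Parts (ii) and (iii) are not affected. The ramification filtration lies in inertia and is computed from the images of $U_L^{(j)}$, on which $\mu_{\sqrt{-1}}$ is trivial, and $M^{k-1}I$ is the image of an inertia element. The later uses in Propositions \ref{prop:cep} and \ref{prop:ass3} also only see inertia, so your enlargement of $L$ would be harmless there. It does not, however, prove (i) as stated.

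Apart from this, your route is correct and more self-contained than the paper's. The paper merely observes that over $L$ the representation has the shape treated in \cite[Proposition 4.7]{AT25}, and that by \eqref{eqn:hab} the Galois and ramification groups agree with those over $\Q_q$. You instead derive (i) and (iii) directly from local class field theory: surjectivity of $N_{L/\Q_q}$ on units, $(\Z_q/p^n\Z_q)^\times\cong\mu_{q-1}\times(\Z/p^{n-1}\Z)^2$, and bijectivity of $u\mapsto u^{k-1}$ on the pro-$p$ part since $p\nmid k-1$.

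Your Herbrand computation is also right. The upper breaks are at $0,1,\dots,n-1$, and $\psi(j)=\delta(1+q+\dots+q^{j-1})=(q^j-1)/(q-1,k-1)$ gives exactly the intervals and $i_n$ of (ii). So you do not need to fall back on \eqref{eqn:hab}. The direct computation shows where each hypothesis enters, and it is what brings the $\mu_{\sqrt{-1}}$ issue to light.

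Your claim about the product over $v\mid p$ is only asserted. To justify it, note that $\varepsilon_2^{k-1}\not\equiv\varepsilon_2^{\prime\,k-1}\pmod p$, because $(p+1)\nmid k-1$ by (P3). Hence, after extending scalars by $\Z_q$, every $G_L$-stable lattice splits into the two eigenlines. The kernel of $\rho_{f,v}$ mod $p^n$ on $G_L$ is then the common kernel of $\varepsilon_2^{k-1}\mu_{\sqrt{-1}}$ and $\varepsilon_2^{\prime\,k-1}\mu_{\sqrt{-1}}$ mod $p^n$, which is independent of $v$.
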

\begin{proof}
By the previous remarks we have
\begin{equation}\label{eqn:mackey}
    \rho_{v\mid G_{L}} \simeq (\varepsilon_2^{k-1}\oplus \varepsilon_2^{\prime \ k-1})_{\mid G_{L}}\otimes {\mu_{\sqrt{-1}}}_{\mid G_{L}}.
\end{equation}
The proof is now identical to the one in \cite[Proposition 4.7]{AT25} after replacing $L$ with $\Q_q$ and using the fact that both the Galois groups and the ramification groups are isomorphic by the previous discussion.
\end{proof}
\begin{remark}\label{rem:conj}
    The character $\varepsilon_2^\prime$ has the same properties of $\varepsilon_2$:
    \begin{itemize}
        \item $\varepsilon_2^\prime(p)=\varepsilon_2(p)=1$ 
        \item The character $\varepsilon_2^\prime$ is an embedding of $\Z^\times_{q}$ in $\Z^\times_{q}$.
    \end{itemize}
    Since there are only two characters that satisfy these properties and they are the two possible embeddings of $\Z^\times_{q}$ we conclude that $\varepsilon_2\cdot \varepsilon_2^\prime$ is the field norm $N_{\Q_q/\Q_p}$.
\end{remark}
\subsection{The proof of Theorem \ref{main:thm}}
In this section we shall prove the main Theorem \ref{main:thm}. The last step is to prove the following
\begin{proposition}[Normal closure lemma]\label{prop:ncl}
    Let $p$ be a rational prime satisfying \hyperref[ass1]{assumption 1}. The normal closure of the group $\Gal(L(p^n)/L(p^{n-1}))$ in $\Gal(\K(p^n)/\K)$ is $\Gal(\K(p^n)/\K(p^{n-1}))$.
\end{proposition}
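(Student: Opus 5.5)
We identify everything with matrix groups. Write $\mathcal{A}=\mathcal{O}_f\otimes_\Z\Z_p$, so $\rho_p(p^n)$ identifies $\Gal(\K(p^n)/\K)$ with a subgroup $G(p^n)\subseteq\gl_2(\mathcal{A}/p^n\mathcal{A})$. By the inner twist discussion of Section \ref{sec4} (Ribet's theorem) together with (P2), we take $G(p^n)$ to contain $\sl_2(\mathcal{A}'/p^n\mathcal{A}')$. Here $\mathcal{A}'=\mathcal{O}_{\bb F}\otimes\Z_p$ is the relevant ring; at least its reduction mod $p$ contains $\sl_2(\mathcal{A}/p\mathcal{A})$. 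The subgroup $\Gal(\K(p^n)/\K(p^{n-1}))$ is then the kernel $G(p^n)\cap(1+p^{n-1}M_2(\mathcal{A}/p^n\mathcal{A}))$. For $n\geq 2$ this kernel is an $\mathbb{F}_p$-vector space, via $1+p^{n-1}X\mapsto X \bmod p$, sitting inside $M_2(\mathcal{A}/p\mathcal{A})$. Conjugation by $G(p^n)$ acts on it through the adjoint action of the mod $p$ image $G(p)$. The subgroup $\Gal(L(p^n)/L(p^{n-1}))$ is the image of the local inertia. By Proposition \ref{Prop:gal} and \eqref{eqn:mackey}, after the conjugation of Proposition \ref{prop:car}, it is the group of matrices $1+p^{n-1}X$ with $X$ in a split (or nonsplit, in a suitable $\Z_p$-basis) Cartan: it is $(\Z/p\Z)^2$ for $n\geq2$ and $\Z/\delta\Z$ for $n=1$.

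\textbf{Case $n\geq 2$.} Let $C\subseteq M_2(\mathbb{F}_p)\subseteq M_2(\mathcal{A}/p\mathcal{A})$ be the two-dimensional space coming from $\varepsilon_2^{k-1}\oplus\varepsilon_2'^{\,k-1}$. This is the Lie algebra of the nonsplit Cartan $\mathbb{F}_q^\times\subset\gl_2(\mathbb{F}_p)$ (the unramified twist acts trivially mod $p$ on $1+p^{n-1}X$). Its normal closure corresponds to the $\mathbb{F}_p[G(p)]$-submodule generated by $C$. I would first show that the $\mathbb{F}_p$-span of $\{gXg^{-1} : g\in\sl_2(\mathcal{A}/p\mathcal{A}),\,X\in C\}$ is all of $\mathfrak{gl}_2$ restricted to the kernel. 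Since $C$ contains the identity and a non-scalar element $Y$ (a generator of $\mathbb{F}_q$), conjugating $Y$ by $\sl_2(\mathbb{F}_p)$ spans $\mathfrak{sl}_2(\mathbb{F}_p)$ plus scalars; this uses $p\geq5$, so that $\mathfrak{sl}_2(\mathbb{F}_p)$ is irreducible. Then we pass to $\mathcal{A}/p\mathcal{A}$-coefficients. Conjugating by elements of $\sl_2(\mathcal{A}/p\mathcal{A})$ such as $\begin{pmatrix}a&0\\0&a^{-1}\end{pmatrix}$ and unipotents $\begin{pmatrix}1&b\\0&1\end{pmatrix}$ with $b\in\mathcal{A}/p\mathcal{A}$ produces $b\cdot e_{12}$ for all $b$, and hence all of $\mathfrak{sl}_2(\mathcal{A}/p\mathcal{A})$. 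This is the step where (P2) on the full ring $\mathcal{A}$, not just $\Z_p$, is essential. Finally the kernel itself lies in $\mathfrak{sl}_2(\mathcal{A}/p\mathcal{A})\oplus\{\text{scalars with determinant in }(\Z_p^\times)^{k-1}\}$. The determinant condition in \eqref{eqn:gathe} restricts the scalar part to $\mathbb{F}_p\cdot 1$, which already lies in $C$. So the generated submodule is the whole kernel.

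\textbf{Case $n=1$.} Here $\Gal(L(p)/L)\simeq\Z/\delta\Z$ is the image of $\mathbb{F}_q^\times$ under $x\mapsto\operatorname{diag}(x^{k-1},x^{p(k-1)})$, and $\Gal(\K(p)/\K)=G(p)$. We need the normal closure of this cyclic group to contain $\sl_2(\mathcal{A}/p\mathcal{A})$ and to have full determinant image. Assumption (P3), namely $\frac{p+1}{2}\nmid k-1$, guarantees that the image of $\mathbb{F}_q^\times$ under the $(k-1)$-th power map is not contained in the scalars times $\{\pm1\}$. So the subgroup contains a nonscalar semisimple element $s$ of $\sl_2(\mathbb{F}_p)\cdot(\text{scalars})$. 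The normal subgroup of $\sl_2(\mathcal{A}/p\mathcal{A})$ generated by commutators $[g,s]$ is then non-central, and by the structure of $\sl_2$ over the finite product of finite fields $\mathcal{A}/p\mathcal{A}$ (each factor simple modulo center for $p\geq5$) it is all of $\sl_2(\mathcal{A}/p\mathcal{A})$. Here one must check non-centrality in \emph{each} factor of $\mathcal{A}/p\mathcal{A}=\prod_v\mathcal{O}_{f,v}/\mathfrak{p}_v^{e_v}$. This holds because $s$ comes from $\gl_2(\mathbb{F}_p)$ embedded diagonally. For the determinant, the determinant of the local image is $N_{\mathbb{F}_q/\mathbb{F}_p}(x)^{k-1}$, which surjects onto $(\mathbb{F}_p^\times)^{k-1}$, matching \eqref{eqn:gathe}.

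I expect the main obstacle to be the precise description of $\Gal(\K(p^n)/\K)$ as a matrix group. Ribet's result gives the image over $\mathcal{O}_{\bb F}\otimes\Z_p$ only for $p$ outside a finite set, while Assumption \ref{ass1} gives only the mod $p$ statement. I would try to bridge this with the standard lifting lemma: a closed subgroup of $\sl_2(\mathcal{A}')$ surjecting mod $p$ onto $\sl_2$ with $p\geq5$ is everything. Combined with the Lie algebra span computation above, this identifies each layer $\Gal(\K(p^n)/\K(p^{n-1}))$ with $\{X\in M_2:\tr X\in\mathbb{F}_p\}$. If the ring of definition mismatch between $\mathcal{A}$ and $R$ from Proposition \ref{prop:car} causes trouble, I would instead work with $\rho\otimes\omega^*$ (Remark \ref{rem:img}), which is legitimate over $\K$ since $\omega$ is trivial on $G_{\K_\mathfrak{p}}$ by Lemma \ref{lemma:fiss}.
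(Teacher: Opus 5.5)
Your proposal follows the paper's proof. For $n\geq 2$ it uses the log map $\mathcal{L}_n$ into matrices with trace in $\mathbb{F}_p$, invariance under the adjoint action of $\sl_2(\mathcal{A}/p\mathcal{A})$ given by (P2), and then the scalar part. For $n=1$ it takes the normal closure of the cyclic local image in $\widehat{G}(\mathcal{A}/p\mathcal{A})$, using (P3) and the determinant. The difference is that you prove by hand what the paper imports from \cite[Propositions A.1.1 and A.2.5]{AT25}.

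For $n\geq 2$ your version is correct and somewhat more transparent. Identifying the local layer with $C=\mathbb{F}_q\subset M_2(\mathbb{F}_p)$ gives two things at once. It gives a nonzero trace-zero element, which the paper obtains by counting ($q>p$). It also gives $\mathbb{F}_p\cdot I\subseteq C$, which the paper obtains from surjectivity of the norm and $p\nmid k-1$. Your unipotent conjugation works over any commutative $\mathbb{F}_p$-algebra with $p$ odd.

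Your ``main obstacle'' is not actually an obstacle. Write $\overline{H}$ for the normal closure of $\Gal(L(p^n)/L(p^{n-1}))$. The chain $\widehat{M}_2(\mathcal{A}/p\mathcal{A})\subseteq\mathcal{L}_n(\overline{H})\subseteq\mathcal{L}_n(\Gal(\K(p^n)/\K(p^{n-1})))\subseteq\widehat{M}_2(\mathcal{A}/p\mathcal{A})$, together with injectivity of $\mathcal{L}_n$, is all that is used; this is exactly how the paper concludes. So the appeal to Ribet over $\mathcal{A}'$ and the lifting lemma can simply be dropped.

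The genuine gap is in the case $n=1$. You conclude by treating $\mathcal{A}/p\mathcal{A}$ as a finite product of finite fields, so that a normal subgroup of $\sl_2(\mathcal{A}/p\mathcal{A})$ that is non-central in every factor must be everything. But, as you write yourself, $\mathcal{A}/p\mathcal{A}=\prod_v\mathcal{O}_{f,v}/\mathfrak{p}_v^{e_v}$. This ring is reduced only if $p$ is unramified in $\mathbb{K}_f$, and Assumption 1 does not impose that. If some $e_v>1$, the kernel of $\sl_2(\mathcal{O}_{f,v}/\mathfrak{p}_v^{e_v})\to\sl_2(\mathbb{F}_v)$ (with $\mathbb{F}_v$ the residue field) is a proper, non-central normal subgroup. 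So the step fails as stated.

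The repair is a filtration argument. Let $\mathfrak{m}_v=\mathfrak{p}_v/\mathfrak{p}_v^{e_v}$.
\begin{itemize}
\item Your commutators $[g,s]$, taken with $g$ supported in a single factor, show that the normal closure $N$ maps onto each $\sl_2(\mathbb{F}_v)$.
\item Suppose $N$ maps onto $\sl_2$ modulo $\mathfrak{m}_v^{i}$. Take $y\in N$ lifting $g\in\sl_2(\mathbb{F}_v)$, and $u\in\sl_2$ (supported in the $v$-factor) with $u\equiv 1+X$, where $X\in\mathfrak{m}_v^{i}\mathfrak{sl}_2$. Then $uyu^{-1}y^{-1}\in N$ and it is congruent to $1+(X-gXg^{-1})$ modulo $\mathfrak{m}_v^{i+1}$.
\item For $p\geq 5$ these elements fill the layer $\mathfrak{sl}_2\otimes\mathfrak{m}_v^i/\mathfrak{m}_v^{i+1}$. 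For example, $g=\operatorname{diag}(a,a^{-1})$ with $a^2\neq 1$ gives $(1-a^2)x\,e_{12}$. Hence $N$ maps onto $\sl_2$ modulo $\mathfrak{m}_v^{i+1}$, and induction finishes.
\end{itemize}
This is the kind of statement the paper delegates to \cite[Proposition A.1.1]{AT25}. Alternatively, you can add the hypothesis that $p$ is unramified in $\mathbb{K}_f$, which holds in all the examples of the last section.

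A minor point: in this case you only need a nonscalar element in the local image, that is $(p+1)\nmid(k-1)$, which (P3) implies. The requirement that $s$ lie in $\sl_2(\mathbb{F}_p)\cdot(\text{scalars})$ plays no role.
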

\noindent that will be done in the next section. Assuming the normal closure lemma one can show that $\bb{K}(\rho_{f,p\mid G_\bb{K}})$ satisfies assumptions \ref{ass2} and \ref{ass3} as follows.
\begin{enumerate}
    \item To construct a central element one starts with the scalar element $\tau$ inside $\Gal(L(\rho_{v\mid G_L})/L)$ whose existence is guaranteed by the last point of Proposition \ref{Prop:gal}. One then lifts $\tau$ to an inertia group and takes an appropriate power to have image in $\Z$ under the cyclotomic character $\varepsilon_p$. The result is an automorphism satisfying all the hypotheses of \hyperref[ass2]{assumption 2}.
    \item For \hyperref[ass3]{assumption 3} one takes $\bb{L}_n=\bb{K}(p^n)$; Proposition \ref{Prop:gal} gives the desired inequalities on the size of the ramification groups and the ramification indices. By the normal closure lemma $\overline{\Gal(L(p^n)/L(p^{n-1}))}=\Gal(\bb{L}_n/\bb{L}_{n-1})$; since the last non trivial ramification group $H_n$ of $\bb{L}(p^n)/\bb{L}(p^{n-1})$ is precisely $\Gal(L(p^n)/L(p^{n-1}))$ one has that $\bb{L}_n^{\overline{H_n}}=\bb{L}_{n-1}$. Therefore \hyperref[ass3]{assumption 3} is met.
\end{enumerate}
\begin{remark}\label{rem:pinf}
    Let $\mathfrak{p}_v$ be the non trivial prime ideal of $\mathcal{O}_{f,v}$ and denote by $\bb{Q}(\mathfrak{p}_v^\infty)$ the compositum of all $\Q(\mathfrak{p}_v^n)$ inside $\bb{Q}(\rho_{f,v})$ as $n$ varies. We have
    \begin{equation}\label{eqn:inters}
        G_{\bb{Q}(\mathfrak{p}_v^n)}=\ker\rho_{f,v}(\mathfrak{p}_v^n)\text { and }G_{\bb{Q}(\mathfrak{p}_v^\infty)}=\bigcap_{n>0}\ker\rho_{f,v}(\mathfrak{p}_v^n).
    \end{equation}
    Since $\{\ker\rho_{f,v}(\mathfrak{p}_v^n)\}_n$ is a fundamental system of neighborhoods of the identity for the (separated) topological group $G_{\Q_p(\rho_{f,v})}$, the intersection in \eqref{eqn:inters} is trivial. This means that $\bb{Q}(\mathfrak{p}_v^\infty)=\bb{Q}(\rho_{f,v})$.
\end{remark}
We give now rigorous proofs of points (1) and (2) assuming Proposition \ref{prop:ncl}.
\begin{proposition}\label{prop:cep}
    The field $\bb{K}(\rho_{f,p\mid G_\bb{K}})$ has the central element property.
\end{proposition}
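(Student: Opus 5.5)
We need an element $\tau\in G_\K\cap I$, with $I$ an inertia group at the prime $\mathfrak{p}$ of $\K$ above $p$ fixed by our embedding $\overline{\Q}\hookrightarrow\overline{\Q}_p$, such that $\varepsilon_p(\tau)=g\in\Z$ with $g>1$ and such that $\tau$ restricted to $\K(\rho_{f,p\mid G_\K})$ is central in its Galois group over $\K$. It suffices to find such a $\tau$ whose image under every $\rho_{f,v}$, $v\mid p$, is a scalar matrix. Indeed, $\Gal(\K(\rho_{f,p\mid G_\K})/\K)$ embeds via $\rho_{f,p}=\prod_{v\mid p}\rho_{f,v}$ into $\prod_{v\mid p}\gl_2(\mathcal{O}_{f,v})$, and scalar matrices are central there. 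Note that centrality is unaffected by the conjugation of Proposition \ref{prop:car}, since conjugation preserves scalar matrices.

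\textbf{Step 1: a scalar element in inertia.} We work locally over $L=\K_\mathfrak{p}\Q_q$. By \eqref{eqn:mackey}, on $G_L$ each $\rho_{f,v}$ is $(\varepsilon_2^{k-1}\oplus\varepsilon_2^{\prime\,k-1})\otimes\mu_{\sqrt{-1}}$. Restricted to the inertia subgroup $I_L$, the factor $\mu_{\sqrt{-1}}$ is trivial. Choose a rational integer $M>1$ prime to $p$, for instance $M=2$ since $p\ge5$. Since $L/\Q_q$ is unramified, local class field theory gives an element $\tau_0\in I_L$ whose image under the Artin map lies in $\Z_q^\times$ and corresponds to $M\in\Z_p^\times\subset\Z_q^\times$. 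Then $\varepsilon_2(\tau_0)=\varepsilon_2'(\tau_0)=M$, because both characters restrict to embeddings of $\Z_q^\times$ fixing $\Z_p^\times$ (Remark \ref{rem:conj}). Hence $\rho_{f,v}(\tau_0)=M^{k-1}\cdot\mathrm{Id}$ for every $v\mid p$ simultaneously. This is the content of Proposition \ref{Prop:gal}(iii), but here we need a single $\tau_0$ working for all $v\mid p$; the argument above provides one because $\tau_0$ does not depend on $v$.

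Next we compute the cyclotomic character. Since $\varepsilon_2\varepsilon_2'=N_{\Q_q/\Q_p}$ restricted to $G_{\Q_q}$ is the cyclotomic character $\varepsilon_p$ up to the normalization of the Artin map, we get $\varepsilon_p(\tau_0)=M^{2}$, or $M^{-2}$ depending on that normalization. If the sign convention gives $M^{-2}$, we replace $\tau_0$ by $\tau_0^{-1}$. In either case $\tau:=\tau_0^{\pm1}$ lies in $I_L\subset G_L\subset G_{\K_\mathfrak{p}}\subset G_\K$ and in an inertia group at $\mathfrak{p}$, with $\varepsilon_p(\tau)=g:=M^2>1$, an integer. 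The image $\rho_{f,v}(\tau)$ is still scalar, now equal to $M^{\pm(k-1)}\mathrm{Id}$.

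\textbf{Step 2: centrality.} For any $\sigma\in G_\K$ we have $\rho_{f,p}(\sigma\tau\sigma^{-1})=\rho_{f,p}(\tau)$, because $\rho_{f,p}(\tau)$ is scalar. Hence $\sigma\tau\sigma^{-1}\tau^{-1}\in\ker\rho_{f,p}=G_{\K(\rho_{f,p\mid G_\K})}$. So the restriction of $\tau$ is central in $\Gal(\K(\rho_{f,p\mid G_\K})/\K)$, and Assumption \ref{ass2} holds for this field.

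\textbf{Main obstacle.} The delicate point is Step 1: making sure the same inertia element is scalar for all $v\mid p$ at once, and pinning down $\varepsilon_p(\tau)$ as an honest positive integer greater than $1$. Both reduce to the explicit local class field theory description of $\varepsilon_2,\varepsilon_2'$, together with the identification $\varepsilon_2\varepsilon_2'=N_{\Q_q/\Q_p}=\varepsilon_p$ on inertia. The sign ambiguity coming from the Artin map convention is handled by inverting $\tau$ if necessary.
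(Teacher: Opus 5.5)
Your proof is correct and follows the paper's strategy. You use the induced description of $\rho_{f,v}$ at $p$ to produce an inertia element acting by the scalar $M^{k-1}$, deduce centrality from scalarity, and normalize $\varepsilon_p(\tau)$ to an integer $g>1$. The differences are technical. You choose $\tau$ directly in $I_L=I_{\Q_q}$ by class field theory, which makes it visibly independent of $v\mid p$; the paper instead lifts from $\Gal(L(p^\infty)/L)$ through $L^{ur}$, and glosses over the dependence on $v$ by identifying $L(p^\infty)$ with $L(\rho_{v\mid G_L})$. You also get $\varepsilon_p(\tau)=M^{\pm 2}$ exactly from $\varepsilon_2\varepsilon_2'=N_{\Q_q/\Q_p}$, whereas the paper only gets $\varepsilon_p(\tau)^{k-1}=4^{k-1}$ from the determinant and passes to $\tau^{p-1}$ to remove the leftover root of unity.
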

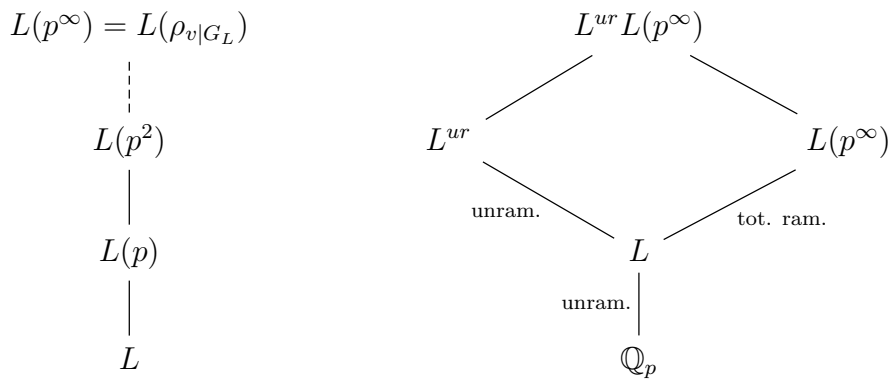
\begin{figure}\label{fig:estensioni}
    \centering
    \begin{tikzcd}
        {L(p^\infty)=L(\rho_{v\mid G_L})} &&& {L^{ur}L(p^\infty)} \\
        {L(p^2)} && {L^{ur}} && {L(p^\infty)} \\
        {L(p)} &&& L \\
        L &&& {\mathbb{Q}_p}
        \arrow[no head, from=1-4, to=2-5]
        \arrow[dashed, no head, from=2-1, to=1-1]
        \arrow[no head, from=2-3, to=1-4]
        \arrow[no head, from=3-1, to=2-1]
        \arrow[no head, from=3-1, to=4-1]
        \arrow["{\text{unram.}}", no head, from=3-4, to=2-3]
        \arrow["{\text{tot. ram.}}"', no head, from=3-4, to=2-5]
        \arrow["{\text{unram.}}", no head, from=4-4, to=3-4]
    \end{tikzcd}
    \caption{Field extensions considered in the proof of Proposition \ref{prop:cep}.}
\end{figure}
\begin{proof}
By Proposition \ref{Prop:gal} with $M=2$ there exists an element of $\tau\in\Gal(L(p^\infty)/L)=\Gal(L(\rho_{v\mid G_L})/L)\simeq\operatorname{im}(\rho_{v\mid G_L})$ corresponding to the scalar matrix $\begin{pmatrix} 2^{k-1} & 0 \\ 0 & 2^{k-1} \end{pmatrix}$. Since $L(p^\infty)$ is totally ramified over $L$, denoting by $L^{ur}$ the maximal unramified extension of $L$ inside $\overline{\Q_p}$ we have $\Gal(L^{ur}L(p^\infty)/L^{ur})\simeq \Gal(L(p^\infty)/L)$ since $L^{ur}$ and $L(p^\infty)$ are linearly disjoint and we lift $\tau$ inside $\Gal(L^{ur}L(p^\infty)/L^{ur})$. Since $L$ is unramified over $\Q_p$ we have $L^{ur}=\Q_p^{ur}$ and $\tau$ belongs to a quotient of the inertia subgroup of $G_{\Q_p}$ so we can lift it to an element inside the inertia group that we call again $\tau$. The restriction $\tau_{\mid \bb{L}(p^\infty)}$ is the image of $\tau$ under the quotient so it is scalar and therefore central in $\Gal(\bb{K}(p^\infty)/\bb{K})$. We have
\begin{equation*}
    \det\rho_{f,v}(\tau)=\varepsilon_p(\tau)^{k-1}=2^{2(k-1)}
\end{equation*}
so $\varepsilon_p(\tau)=4\omega$ for some root of unity $\omega\in\Z_p^\times$. Since $\Z_p^\times$ contains only $(p-1)$-roots of unity, by replacing $\tau$ with $\tau^{p-1}$ we get rid of $\omega$ and we have found a central element.
\end{proof}
\begin{remark}
    In the proof of Proposition \ref{prop:cep} the normal closure lemma is not needed.
\end{remark}
\begin{proposition}\label{prop:ass3}
    The field $\bb{K}(\rho_{f,p\mid G_\bb{K}})$ satisfies \hyperref[ass3]{assumption 3}.
\end{proposition}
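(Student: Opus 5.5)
We take $\bb{L}_n:=\bb{K}(p^n)$, the field cut out by $\rho_p(p^n)_{\mid G_\bb{K}}$, with base prime $\mathfrak{p}$ the prime of $\bb{K}$ below the fixed embedding $\overline{\Q}\hookrightarrow\overline{\Q}_p$, and set $\bb{L}_0=\bb{K}$. Remark \ref{rem:pinf}, applied to each $v\mid p$ and taking the compositum over the finitely many $v\mid p$, gives $\bigcup_n\bb{L}_n=\bb{K}(\rho_{f,p\mid G_\bb{K}})$. Each $\bb{L}_n/\bb{K}$ is Galois, because it is cut out by a representation of $G_\bb{K}$.

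\textbf{Local structure at $\mathfrak{p}$.} The completion of $\bb{L}_n$ at the place above $\mathfrak{p}$ determined by the embedding is $\bb{K}_\mathfrak{p}(p^n)$. Since $L=\bb{K}_\mathfrak{p}\Q_q$ is unramified over $\bb{K}_\mathfrak{p}$, the ramification groups of $\bb{L}_n/\bb{K}$ at this place coincide, for $i\geq 0$, with those of $L(p^n)/L$, and the ramification index $e_n$ equals $[L(p^n):L]=\delta q^{n-1}$. The inclusions $\bb{L}_{n-1}\subsetneq\bb{L}_n$ are strict, and $\bb{L}_n/\bb{K}$ is ramified for $n\geq 1$, because $L(p^n)\neq L(p^{n-1})$ by Proposition \ref{Prop:gal}(i).

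\textbf{The numerical bounds.} Proposition \ref{Prop:gal}(ii) gives
\[
\frac{e_n}{i_n+1}=\frac{\delta q^{n-1}(q-1,k-1)}{q^{n-1}-1+(q-1,k-1)}\leq \delta (q-1,k-1)=q-1,
\]
so we may take $C_1=q-1$, a constant independent of $n$. For $n\geq 2$, Proposition \ref{Prop:gal}(ii) also gives $H_n=G_{i_n}=\Gal(L(p^n)/L(p^{n-1}))\simeq(\Z/p\Z)^2$, and $H_1\simeq\Z/\delta\Z=\Gal(L(p)/L)$.

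\textbf{Normal closure and the bound on $|\overline{H_n}|$.} The normal closure lemma (Proposition \ref{prop:ncl}) gives $\overline{H_n}=\Gal(\bb{L}_n/\bb{L}_{n-1})$, and hence $\bb{L}_n^{\overline{H_n}}=\bb{L}_{n-1}$. For $n=1$ it gives that the normal closure of $\Gal(L(p)/L)$ is $\Gal(\bb{L}_1/\bb{K})$. It remains to bound $|\overline{H_n}|=[\bb{L}_n:\bb{L}_{n-1}]$ uniformly in $n$.

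For $n\geq 2$, the group $\Gal(\bb{L}_n/\bb{L}_{n-1})$ injects into the kernel of the reduction map $\gl_2(\mathcal{O}_f/p^n)\to\gl_2(\mathcal{O}_f/p^{n-1})$. This kernel is isomorphic to $M_2(\mathcal{O}_f/p\mathcal{O}_f)$ and has order $p^{4[\bb{K}_f:\Q]}$. For $n=1$, $|\overline{H_1}|=[\bb{L}_1:\bb{K}]\leq|\gl_2(\mathcal{O}_f/p\mathcal{O}_f)|$. So we take $C_2=|\gl_2(\mathcal{O}_f/p\mathcal{O}_f)|$, which dominates both cases.

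\textbf{Main obstacle.} The identification of the ramification filtration of the global extension with the local one is routine. The real content is the normal closure lemma, which is deferred to Section \ref{sec6}; given that lemma and Proposition \ref{Prop:gal}, the verification above is essentially bookkeeping. One point still needs checking: that Proposition \ref{Prop:gal} applies to $L$, i.e. that $L(p^n)/L$ (rather than $L(p^n)/\Q_q$) has the stated ramification groups. This is what the isomorphism \eqref{eqn:hab} provides.
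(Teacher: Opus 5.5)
Your proposal follows the paper's proof essentially step by step. You use the same filtration $\bb{L}_n=\bb{K}(p^n)$, Remark \ref{rem:pinf} for $\bigcup_n\bb{L}_n$, and Proposition \ref{Prop:gal} for $e_n$, $i_n$ and $H_n$. You then use the normal closure lemma (Proposition \ref{prop:ncl}) for $\bb{L}_n^{\overline{H_n}}=\bb{L}_{n-1}$, and embed $\overline{H_n}$ into $\gl_2(\mathcal{O}_f/p\mathcal{O}_f)$ for $n=1$ and into $M_2(\mathcal{O}_f/p\mathcal{O}_f)$ for $n\geq 2$. Your $C_1=q-1$ is just the paper's $p^2$ sharpened by one.

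The only slip is the final choice $C_2=|\gl_2(\mathcal{O}_f/p\mathcal{O}_f)|$. Since $\gl_2(\mathcal{O}_f/p\mathcal{O}_f)\subsetneq M_2(\mathcal{O}_f/p\mathcal{O}_f)$, this number is strictly smaller than the bound $p^{4[\bb{K}_f:\Q]}$ you derived for $n\geq 2$. So it does not "dominate both cases" as you claim; the inequality runs the other way. For instance, when $\mathcal{O}_f=\Z$ the normal closure lemma gives $|\overline{H_n}|=p^4>|\gl_2(\mathbb{F}_p)|$ for $n\geq 2$. Take $C_2=p^{4[\bb{K}_f:\Q]}$ instead, which bounds both cases, as the paper does.
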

\begin{proof}
    It is sufficient to prove our result for $\bb{K}(p^\infty)$ by remark \ref{rem:pinf}. Let $n\geq 1$ consider the filtration given by $\mathbb{L}_{n} = \K(p^{n})$. By the fixed embedding $\Qb\hookrightarrow\overline{\Q_p}$ we have fixed a place of $\K(p^{\infty})$ over $p$. By the first two points of Proposition \ref{Prop:gal} using the notation of \hyperref[ass3]{assumption 3} we have (recall that $\delta=\frac{q-1}{(q-1,k-1)}$ from Proposition \ref{Prop:gal})
    \begin{equation*}
        {e_{n} = \delta q^{n-1} = \frac{(q-1)q^{n-1}}{(q-1,k-1)}, \quad i_{n} = \frac{q^{n-1}-1}{(q-1,k-1)},}
        \quad H_{n} \simeq \operatorname{Gal}(L(p^{n})/L(p^{n-1})).
    \end{equation*}
    Thus
    \begin{equation*}
        \frac{e_{n}}{i_{n}+1} = \frac{(q-1)q^{n-1}}{q^{n-1}-1 + (q-1,k-1)} \leq q-1
    \end{equation*}
    and by Proposition \ref{prop:ncl} we have $\overline{H_{n}} = \operatorname{Gal}(\mathbb{L}_{n}/\mathbb{L}_{n-1})$ and $\mathbb{L}_{n}^{\overline{H_{n}}} = \mathbb{L}_{n-1}$. In the proof of Proposition \ref{prop:ncl} one shows that  $\overline{H_n}$ embeds into $\gl_2(\mathcal{O}_f/p\mathcal{O}_f)$ for $n=1$ and into $M_2(\mathcal{O}_f/p\mathcal{O}_f)$ for $n\geq 2$ so taking the norm we have the bound
    \begin{equation*}
        |\overline{H_{n}}| \leq p^{4[\bb{E}:\mathbb{Q}]}
    \end{equation*}
     (recall that $\bb{E}$ is the number field generated by the coefficients of the modular form). \hyperref[ass3]{assumption 3} therefore holds with $C_{1} = p^{2}$ and $C_{2} = p^{4[\bb{E}:\mathbb{Q}]}$.
\end{proof}
Having shown the necessary conditions we can finally prove Theorem \ref{main:thm}.
\begin{proof}[Proof (of Theorem \ref{main:thm})]
    We want to show that the representation $\widehat{\rho}_f:G_\Q\rightarrow \gl_2(\widehat{\mathcal{O}_f})$ has (B) or in other words that $\Q(\widehat{\rho}_f)$ has (B). Recall that $\bb{K}$ is the abelian extension of $\bb{Q}$ fixed by all the Dirichlet characters associated to the inner twists of $f$.
    The field $\mathbb{F} = \K(\rho^{(p)}_{\mid G_\bb{K}})$ is unramified at $p$ and by propositions \ref{prop:cep} and \ref{prop:ass3} the field $\mathbb{L} = \bb{K}(\rho_{f,p\mid G_\bb{K}})$ satisfies \hyperref[ass2]{assumption 2} and \hyperref[ass2]{assumption 3}. By Theorem \ref{thm:composto} the composite $\bb{FL}$ has property (B) but
    \begin{equation*}
        \mathbb{F}\bb{L}=\K(\rho^{(p)}_{\mid G_\bb{K}}\times\rho_{p|G_\K})=\K(\widehat{\rho}_{f|G_\K})
    \end{equation*}
    so we have shown that $\widehat{\rho}_{f|G_\K}$ has (B). Since 
    $\Q(\widehat{\rho}_f)\subseteq \K(\widehat{\rho}_{f|G_\K})$ we have proved Theorem \ref{main:thm}.
\end{proof}
\noindent Note that we have actually proved something stronger i.e. that the field $\K(\widehat{\rho}_{f|G_\K})$ has (B).
\section{The proof of the normal closure lemma}\label{sec6}
To fully prove Theorem \ref{main:thm} we have to provide a proof of Proposition \ref{prop:ncl} of which we now give a sketch. For the rest of the section we put $\mathcal{A}=\mathcal{O}_f\otimes_\Z \Z_p$. For $n=1$ one uses the explicit description of the Galois groups and the ramification groups of $L(p^n)/L$ in Proposition \ref{Prop:gal} and the group theoretic result \cite[Proposition A.1.1]{AT25}. The latter gives a condition on the size of a subgroup of $\gl_2(\bb{F}_p)$ and on its image under the determinant to have big normal closure inside a certain subgroup $\widehat{G}(\mathcal{A})$ of $\gl_2(\mathcal{A})$. For $n>1$ the idea is to define an injective \say{log} map from $\Gal(\bb{K}(p^n)/\bb{K}(p^{n-1}))$ to $M_2(\mathcal{A}/p\mathcal{A})$ and to show that $\overline{\Gal(L(p^n)/L(p^{n-1}))}$ and $\Gal(\bb{K}(p^n)/\bb{K})$ are both mapped to the set $\widehat{M_2}(\mathcal{A}/p\mathcal{A})$ of elements in $M_2(\mathcal{A}/p\mathcal{A})$ with traces in $\bb{F}_p$. Denoting $M_2(\mathcal{A}/p\mathcal{A})^0$ the trace zero matrices one applies \cite[Proposition A.2.5]{AT25} with $V=\mathcal{L}_n(\overline{\Gal(L(p^n)/L(p^{n-1}))})\cap {M}_{2}(\mathcal{A}/p\mathcal{A})^{0}$ to get $$\mathcal{L}_n(\overline{\Gal(L(p^n)/L(p^{n-1}))})\supseteq {M}_{2}(\mathcal{A}/p\mathcal{A})^{0}.$$ Using that $\rho_{v\mid G_{L}}\simeq V_{k,0}$ one gets
$$\mathcal{L}_n(\overline{\Gal(L(p^n)/L(p^{n-1}))})\supseteq\mathbb{F}_p\cdot{I}.$$ Since $\widehat{{M}}_{2}(\mathcal{A}/p\mathcal{A}) = {M}_{2}(\mathcal{A}/p\mathcal{A})^{0} \oplus \mathbb{F}_p\cdot{I}$ the result follows.

\begin{proof}[Proof of Proposition \ref{prop:ncl}]
    We closely follow the proof of \cite[Proposition 5.1.]{AT25}. Since $\Gal(\K(p^n)/\K(p^{n-1}))$ is normal in $\Gal(\K(p^n)/\K)$ and  $\Gal(L(p^n)/L(p^{n-1}))\subseteq \Gal(\K(p^n)/\K(p^{n-1}))$, the normal closure of $\Gal(L(p^n)/L(p^{n-1}))$ is contained in the group $\Gal(\K(p^n)/\K(p^{n-1}))$. Recall that $\mathcal{A}=\mathcal{O}_f\otimes_\Z \Z_p$. As in \cite[Section 5]{AT25} we have the following diagram for all $n\geq1$
\begin{equation*}
    \Gal(L(p^n)/L(p^{n-1}))\hookrightarrow\Gal(\K(p^n)/\K(p^{n-1}))\hookrightarrow\Gal(\K(p^n)/\K)\xhookrightarrow{\rho_{p}(p^n)}\widehat{G}(\mathcal{A}/p^n\mathcal{A}).
\end{equation*}
Assume $n=1$.\\
We shall identify $G_L$ with a subgroup of $G_\K$. Denote by $G_1$ the image of $\Gal(L(p)/L)$ under $\rho_{p}(p)$ in $\gl_2(\mathcal{A}/p\mathcal{A})$. By remark \ref{rem:img} the group $G_1$ is contained in $\gl_2(\mathbb{F}_p)$ and has cardinality $\delta:=\frac{q-1}{(q-1,k-1)}$ by Proposition \ref{Prop:gal}. Let $s=\frac{p-1}{(p-1,k-1)}$, then since $\frac{p+1}{2}\nmid k-1$ by \hyperref[ass1]{assumption 1} (P3) the biggest common divisor between $p+1$ and $k-1$ is strictly less than $\frac{p+1}{2}$. After multiplying by $s$ both sides of $\frac{p+1}{(p+1,k-1)}>2$ we get $\delta >2s$. The field $L$ is unramified so
\begin{equation*}
    \langle G_L,G_{\Q_p(\zeta_{p^\infty})}\rangle=G_{L\cap \Q_p(\zeta_{p^\infty})}=G_{\Q_p};
\end{equation*}
since $\ker\varepsilon_p=G_{\Q_p(\zeta_{p^\infty})}$ we have $$\varepsilon_p(\langle G_L,G_{\Q_p(\zeta_{p^\infty})}\rangle)=\varepsilon_p(G_L)=\varepsilon_p(G_{\Q_p})$$
so $\varepsilon_p$ is still surjective when restricted to $G_L$. The determinant of $\rho_{p}(p)_{\mid G_{\bb{K}}}$ is $\varepsilon_p^{k-1}(\chi\circ \varepsilon_p)$ mod $p$ and by restricting to $G_{L}$ we have killed the character so $\det(\rho_{p}(p)_{\mid G_{L}})=\varepsilon_p^{k-1}$ mod $p$. Since $\varepsilon_p$ mod $p$ is surjective onto $\bb{F}_p^\times$ we have shown that $G_1$ satisfies the hypotheses of \cite[Proposition A.1.1]{AT25} so its normal closure in $\widehat{G}(\mathcal{A}/p\mathcal{A})$ is $\widehat{G}(\mathcal{A}/p\mathcal{A})$ itself. Since $\rho_p(p)$ is an isomorphism between $\Gal(\bb{K}(p)/\bb{K})$ and $\widehat{G}(\mathcal{A}/p\mathcal{A})$ we have that
\begin{equation*}
    \overline{G_1}=\overline{\rho_p(p)(\Gal(L(p)/L))}=\rho_p(p)(\overline{\Gal(L(p)/L)})=\rho_p(p)(\Gal(\bb{K}(p)/\bb{K}))
\end{equation*}
and therefore $\Gal(\bb{K}(p)/\bb{K})=\overline{\Gal(L(p)/L)}$.\\
Assume $n>1$.\\
Let $\sigma\in\Gal(\K(p^n)/\K(p^{n-1}))$. By definition $\sigma$ can be seen as an element of the kernel of the map $\Gal(\K(p^n)/\K))\rightarrow \Gal(\K(p^{n-1})/\bb{K})$. Since for each $n$ we have $\Gal(\K(p^{n})/\bb{K})\hookrightarrow \widehat{G}(\mathcal{A}/p^n\mathcal{A})$, passing to matrices we see that $\sigma$ can be mapped to a matrix in $\widehat{G}(\mathcal{A}/p^n\mathcal{A})$ of the form $I+p^{n-1}A_\sigma$ with $A_\sigma\in M_2(\mathcal{A}/p^n\mathcal{A})$ since it has to be trivial mod $p^{n-1}\mathcal{A}$. Moreover choosing $A_\sigma^\prime$ such that $A_\sigma\equiv A_\sigma^\prime$ mod $p\mathcal{A}$ gives the same element in $\widehat{G}(\mathcal{A}/p^n\mathcal{A})$, so the representative $A_\sigma$ is well defined mod $p\mathcal{A}$. We call the class of $A_\sigma$ in $M_2(\mathcal{A}/p\mathcal{A})$ the \say{log} matrix of $\sigma$ and we denote it with $\mathcal{L}_n(\sigma)$ following \cite[Lemma 6.2]{Ha13}. We have therefore the map
\begin{align*}
    \mathcal{L}_n:\Gal(\K(p^n)/\K(p^{n-1}))&\rightarrow M_2(\mathcal{A}/p\mathcal{A})\\
    \sigma&\mapsto A_\sigma.
\end{align*}
Even though $\rho_{p}(p^n)_{\mid G_\bb{K}}$ is defined on $G_\bb{K}$ one can define $\rho_{p}(p^n)(\sigma)$ on $\Gal(\K(p^n)/\K)$ by the definition of $\K(p^n)$; as a map from $\Gal(\K(p^n)/\K)$, the map $\rho_{p}(p^n)$ is injective and satisfies $I+p^{n-1}A_\sigma$ mod $p^n= \rho_{p}(p^n)(\sigma)$. 
As in \cite[Proposition 5.1]{AT25}, for all $\nu\in\Gal(\K(p^n)/\K)$ and $\sigma\in\Gal(\K(p^n)/\K(p^{n-1}))$ we have
\begin{equation}\label{eqn:conj}
    {\rho}_p(p)(\nu_{\mid \bb{K}(p)})\cdot \mathcal{L}_{n}(\sigma)\cdot{\rho}_p(p)(\nu_{\mid \bb{K}(p)})^{-1} = \mathcal{L}_{n}(\nu\sigma\nu^{-1}).
\end{equation}
We now have two notable properties of $\mathcal{L}_n$.
\begin{enumerate}
    \item Let $\sigma,\tau\in\Gal(\bb{K}(p^n)/\bb{K}(p^{n-1})$ such that $\mathcal{L}_n(\sigma)=\mathcal{L}_n(\tau)$. Then $A_\sigma\equiv A_\tau$ mod $p\mathcal{A}$ and $I+p^{n-1}A_\sigma\equiv I+p^{n-1}A_\tau$ mod $p^n\mathcal{A}$ or equivalently $\rho_{p}(p^n)(\sigma)=\rho_{p}(p^n)(\tau)$. But $\rho_{p}(p^n)$ is injective on $\Gal(\K(p^n)/\K)$ so $\sigma=\tau$ and $\mathcal{L}_n$ is injective as well.
    \item  The determinant of $\rho_{p}(p^n)_{\mid G_{\bb{K}}}$ has image in $(\Z/p^n\Z)^\times$ by \eqref{eqn:gathe}. Since
    \begin{equation*}
        \det(X-\lambda I)= \lambda^2-\tr(X)\lambda+\det(X),
    \end{equation*}
    putting $X=p^{n-1}A_\sigma$ and $\lambda=-1$ yields
    \begin{align*}
        &\det(I+p^{n-1}A_\sigma)=\\
        &=1+p^{n-1}\tr(A_\sigma)+(p^{n-1})^2\det A_\sigma\equiv 1+p^{n-1}\tr(A_\sigma)\mod p^n\mathcal{A}
    \end{align*}
    and the trace of $A_\sigma$ has image in $\mathbb{F}_p$.
    Letting $\widehat{{M}}_{2}(\mathcal{A}/p\mathcal{A})$ be the subgroup of ${M}_{2}(\mathcal{A}/p\mathcal{A})$ consisting of matrices having trace in $\mathbb{F}_p$ we have shown that 
    \begin{equation*}
        \mathcal{L}_n:\Gal(\K(p^n)/\K(p^{n-1}))\hookrightarrow \widehat{M_2}(\mathcal{A}/p\mathcal{A}).
    \end{equation*}
\end{enumerate}
If ${M}_{2}(\mathcal{A}/p\mathcal{A})^{0}$ is the set of trace zero matrices then we have the decomposition
\begin{equation*}
    \widehat{{M}}_{2}(\mathcal{A}/p\mathcal{A}) = {M}_{2}(\mathcal{A}/p\mathcal{A})^{0} \oplus \mathbb{F}_p\cdot{I}.
\end{equation*}
Observe also that $\mathcal{L}_{n}(\Gal(L(p^n)/L(p^{n-1})))\subseteq {M}_{2}(\mathbb{F}_p)$ by remark \ref{rem:img}; our goal is to show
\begin{equation*}
    \mathcal{L}_n(\overline{\Gal(L(p^n)/L(p^{n-1}))})=\widehat{{M}}_{2}(\mathcal{A}/p\mathcal{A}).
\end{equation*}
This will prove that $\widehat{{M}}_{2}(\mathcal{A}/p\mathcal{A})=\mathcal{L}_n(\overline{\Gal(L(p^n)/L(p^{n-1})})=\mathcal{L}_n(\Gal(\K(p^n)/\K(p^{n-1}))$ and by the injectivity of $\mathcal{L}_n$ that $\overline{\Gal(L(p^n)/L(p^{n-1}))}=\Gal(\K(p^n)/\K(p^{n-1})$. By \hyperref[ass1]{assumption 1} (P2) the horizontal map  $\rho_{p}(p)_{\mid \Gal(\bb{K}(p)/\bb{K}}$ in diagram \ref{diagram} is surjective so the diagonal map (that is the map appearing in \ref{eqn:conj}) is also surjective.\begin{figure}[h]
    \centering
    \begin{equation}\label{diagram}
        \begin{tikzcd}
        	{\Gal(\bb{K}(p^n)/\bb{K})} \\
        	{\Gal(\bb{K}(p)/\bb{K})} & {\widehat{G}\left(\mathcal{A}/p\mathcal{A}\right)}
        	\arrow["\operatorname{res}", two heads, from=1-1, to=2-1]
        	\arrow[from=1-1, to=2-2]
        	\arrow["{\rho_p(p)}"', two heads, from=2-1, to=2-2]
        \end{tikzcd}
    \end{equation}
\end{figure}
Consider the adjoint action of $\widehat{G}(\mathcal{A}/p\mathcal{A})$ on $M_2(\bb{F}_p)$ given by $\rho_{p}(p)$:
\begin{align*}
    \widehat{G}(\mathcal{A}/p\mathcal{A})\times {M}_{2}(\mathbb{F}_p)&\rightarrow {M}_{2}(\mathbb{F}_p)\\
    (\rho_{p}(p)(\sigma_{\mid \bb{K}(p)}), \tau)&\mapsto \rho_{p}(p)(\sigma_{\mid \bb{K}(p)})\cdot \tau \cdot\rho_{p}(p)(\sigma_{\mid \bb{K}(p)})^{-1}.
\end{align*}
By \eqref{eqn:conj} the subgroup $\mathcal{L}_n(\overline{\Gal(L(p^n)/L(p^{n-1}))})$ of ${M}_{2}(\mathbb{F}_p)$ is invariant under this action. Since ${M}_{2}(\mathcal{A}/p\mathcal{A})^{0}$ is also invariant, we have that
\begin{equation*}
    V=\mathcal{L}_n(\overline{\Gal(L(p^n)/L(p^{n-1}))})\cap {M}_{2}(\mathcal{A}/p\mathcal{A})^{0}
\end{equation*}
is an $\mathbb{F}_p$-subspace of ${M}_{2}(\mathcal{A}/p\mathcal{A})^{0}$ invariant under the adjoint action of $\widehat{G}(\mathcal{A}/p\mathcal{A})$. By \hyperref[ass1]{assumption 1} (P2) we have $\operatorname{SL}_{2}(\mathcal{A}/p\mathcal{A})\subseteq\widehat{G}(\mathcal{A}/p\mathcal{A})$ so $V$ is invariant under the action of $\operatorname{SL}_{2}(\mathcal{A}/p\mathcal{A})$. If 
$\mathcal{L}_{n}(\Gal(L(p^n)/L(p^{n-1})))\cap{M}_{2}(\mathbb{F}_p)^{0}={0}$ then we could identify $\mathcal{L}_{n}(\Gal(L(p^n)/L(p^{n-1})))$ with a subgroup of ${M}_{2}(\mathbb{F}_p)/{M}_{2}(\mathbb{F}_p)^{0}$ which is impossibile since the quotient has cardinality $p$ and the group $\mathcal{L}_{n}(\Gal(L(p^n)/L(p^{n-1})))$ has cardinality $q$ by Proposition \ref{Prop:gal}. In particular this implies that 
\begin{equation*}
    V\cap{M}_{2}(\mathbb{F}_p)^{0} = \mathcal{L}_{n}(\overline{\Gal(L(p^n)/L(p^{n-1}))})\cap{M}_{2}(\mathbb{F}_p)^{0} \neq 0.
\end{equation*}
Recall now the following
\begin{proposition}[Proposition A.2.5, \cite{AT25}]\label{prop:inv}
    Let $p\geq 3$ and $\mathcal{A}$ be a finite product of commutative $\mathbb{F}_p$-algebras. Let $V$ be a $\mathbb{F}_p$-subspace of $M_2(\mathcal{A})^0$ such that $V\cap M_2(\mathbb{F}_p)^0$ is nontrivial and invariant by the adjoint action of $\operatorname{SL}_2(\mathcal{A})$. Then $V=M_2(\mathcal{A})^0$.
\end{proposition}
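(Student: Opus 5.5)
The plan is to first show that $V$ contains all of $M_2(\mathbb{F}_p)^0$. Then, by conjugating with unipotent matrices whose entries lie in $\mathcal{A}$, I will produce the $\mathcal{A}$-multiples of a standard basis of $\mathfrak{sl}_2$. Throughout I use the notation
\begin{equation*}
e=\begin{pmatrix}0&1\\0&0\end{pmatrix},\quad f=\begin{pmatrix}0&0\\1&0\end{pmatrix},\quad h=\begin{pmatrix}1&0\\0&-1\end{pmatrix},
\end{equation*}
and, for $a\in\mathcal{A}$,
\begin{equation*}
u(a)=\begin{pmatrix}1&a\\0&1\end{pmatrix},\quad \ell(a)=\begin{pmatrix}1&0\\a&1\end{pmatrix},
\end{equation*}
both of which lie in $\operatorname{SL}_2(\mathcal{A})$. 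Since $M_2(\mathcal{A})^0=\mathcal{A}e\oplus\mathcal{A}f\oplus\mathcal{A}h$ as an additive group, it suffices to show that $ae$, $af$ and $ah$ lie in $V$ for every $a\in\mathcal{A}$.

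\textbf{Step 1.} The space $W:=V\cap M_2(\mathbb{F}_p)^0$ is a nonzero $\mathbb{F}_p$-subspace of $\mathfrak{sl}_2(\mathbb{F}_p)$. It is stable under the adjoint action of $\operatorname{SL}_2(\mathbb{F}_p)\subseteq\operatorname{SL}_2(\mathcal{A})$, since both $V$ and $M_2(\mathbb{F}_p)^0$ are. I will show that the adjoint representation of $\operatorname{SL}_2(\mathbb{F}_p)$ on $\mathfrak{sl}_2(\mathbb{F}_p)$ is irreducible for $p\geq 3$, which gives $W=M_2(\mathbb{F}_p)^0$.

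To prove irreducibility, take $0\neq X\in W$. Conjugation by a diagonal torus element $\operatorname{diag}(t,t^{-1})$ acts on $e,h,f$ by the characters $t^2,1,t^{-2}$. When $p>3$ one can choose $t$ with $t^2\neq 1$ and $t^2\neq t^{-2}$, so these characters are distinct and a Vandermonde argument puts a nonzero weight component of $X$ in $W$. One then moves between weight spaces using $u(1)$ and $\ell(1)$, exactly as in the computations of Step 2. For $p=3$ the torus is too small, because $t^2=1$ for all $t\in\mathbb{F}_3^\times$. In that case I would argue directly with $u(1)$ and $\ell(1)$: for a general element $X=\alpha h+\beta e+\gamma f$, the differences $u(1)Xu(1)^{-1}-X$ and $\ell(1)X\ell(1)^{-1}-X$ successively kill coordinates, and only uses $2\in\mathbb{F}_3^\times$. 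I expect this irreducibility check, and in particular the small case $p=3$, to be the only delicate point. It is also the only place where the hypothesis $p\geq 3$ enters.

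\textbf{Step 2.} Let $a\in\mathcal{A}$. A direct computation gives
\begin{equation*}
u(a)\,h\,u(a)^{-1}=\begin{pmatrix}1&-2a\\0&-1\end{pmatrix}=h-2a\,e .
\end{equation*}
Since $h\in V$ by Step 1 and $V$ is invariant, $2a\,e\in V$. Because $2$ is invertible in $\mathbb{F}_p$ and $V$ is an $\mathbb{F}_p$-space, this gives $a\,e\in V$ for every $a\in\mathcal{A}$. The same computation with $\ell(a)$ gives $a\,f\in V$ for every $a\in\mathcal{A}$.

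\textbf{Step 3.} Conjugating $a\,e$ by $\ell(1)$ gives
\begin{equation*}
\ell(1)\,(a e)\,\ell(1)^{-1}=a\begin{pmatrix}-1&1\\-1&1\end{pmatrix}=-a\,h+a\,e-a\,f .
\end{equation*}
The left-hand side lies in $V$, and so do $a\,e$ and $a\,f$ by Step 2, hence $a\,h\in V$. Combining Steps 2 and 3, $V$ contains $\mathcal{A}e+\mathcal{A}f+\mathcal{A}h=M_2(\mathcal{A})^0$, and therefore $V=M_2(\mathcal{A})^0$.

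These computations never use that $\mathcal{A}$ is a product of algebras, only that $\mathcal{A}$ is a commutative $\mathbb{F}_p$-algebra and $p$ is odd.
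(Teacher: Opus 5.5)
The overall strategy is sound, but Step 1 contains one false claim; it is easy to repair. Steps 2 and 3 are correct. I checked $u(a)hu(a)^{-1}=h-2ae$, $\ell(a)h\ell(a)^{-1}=h+2af$ and $\ell(1)e\ell(1)^{-1}=-h+e-f$. Together with $M_2(\mathcal{A})^0=\mathcal{A}e\oplus\mathcal{A}f\oplus\mathcal{A}h$, they show that $h\in V$ alone forces $V=M_2(\mathcal{A})^0$.

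The error is the assertion that for $p>3$ there is $t\in\mathbb{F}_p^\times$ with $t^2\neq 1$ and $t^2\neq t^{-2}$. This fails for $p=5$: $\mathbb{F}_5^\times$ has order $4$, so $t^4=1$ for every $t$. The torus then has only the weights $\pm1$ and cannot separate the $e$- and $f$-components. Your case split would have to be $p\geq 7$ versus $p\in\{3,5\}$.

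It is simpler to drop the torus entirely, because the unipotent argument you sketch for $p=3$ works for every odd $p$. For $X=\alpha h+\beta e+\gamma f$ one has
\[ (\operatorname{Ad}u(1)-1)X=\gamma h-(2\alpha+\gamma)e,\qquad (\operatorname{Ad}u(1)-1)^2X=-2\gamma e, \]
\[ (\operatorname{Ad}\ell(1)-1)X=-\beta h+(2\alpha-\beta)f,\qquad (\operatorname{Ad}\ell(1)-1)^2X=-2\beta f. \]
Hence a nonzero $X\in W$ gives:
\begin{itemize}
\item $e\in W$ if $\gamma\neq 0$;
\item $f\in W$ if $\beta\neq 0$;
\item $h\in W$ if $\beta=\gamma=0$.
\end{itemize}
Moreover, $e\in W$ gives $-h-f,\,-2f\in W$, hence $h\in W$; and $f\in W$ gives $h-e,\,-2e\in W$, hence $h\in W$. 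Since Steps 2--3 only need $h\in V$, this completes the argument.

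One smaller inaccuracy: you say Step 1 is the only place where $p\geq 3$ is used. Step 2 also divides by $2$.

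On comparison: the paper does not prove this statement. It imports it from \cite{AT25}, so there is no in-text argument to measure yours against. With the fix above, your proof is a complete, elementary verification. As you observe, it never uses that $\mathcal{A}$ is a product of algebras. It needs only that $\mathcal{A}$ is a commutative $\mathbb{F}_p$-algebra, so that $\operatorname{SL}_2(\mathbb{F}_p)\subseteq\operatorname{SL}_2(\mathcal{A})$, and that $p$ is odd. Your reading of the hypothesis, that $V$ itself is $\operatorname{SL}_2(\mathcal{A})$-invariant, is also the one the paper actually establishes in the proof of Proposition \ref{prop:ncl}.
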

\noindent By Proposition \ref{prop:inv}, $V = M_{2}(\mathcal{A}/p\mathcal{A})^{0}$ and thus 
\begin{equation*}
    \mathcal{L}_{n}(\overline{\Gal(L(p^n)/L(p^{n-1}))})\supseteq M_{2}(\mathcal{A}/p\mathcal{A})^{0}.
\end{equation*}
Since ${\varepsilon_{p}^{k-1}}$ and $\det\rho_{p}$ coincide when restricted to $G_L$ we have that $\varepsilon_p^{k-1}(G_{L(p^n)})=\det(\rho_p(p^n))(G_{L(p^n)})$ that is a subgroup of $(\Z_p/p^n\Z_p)^\times$. By Galois theory 
\begin{equation*}
    \Gal(L(p^n)/L(p^{n-1}))\simeq \frac{\Gal(\overline{\Q_p}/L(p^{n-1}))}{\Gal(\overline{\Q_p}/L(p^{n}))}
\end{equation*}
and its image under $\varepsilon_p$ is a subgroup of 
\begin{equation*}
    \frac{(\Z_p/p^{n-1}\Z_p)^\times}{(\Z_p/p^{n}\Z_p)^\times}\simeq\frac{1+p^{n-1}\Z_p}{1+p^{n}\Z_p}.
\end{equation*}
By \eqref{eqn:mackey} and remark \ref{rem:conj} we have $\det(\rho_{p\mid G_L})=(\varepsilon_2\cdot\varepsilon_2^\prime)^{k-1}=N^{k-1}_{\Q_q/\Q_p}$ so the image of the determinant is $N^{k-1}_{\Q_q/\Q_p}(1+p^{n-1}\Z_p)$ mod $1+p^{n}\Z_p$. By \cite[Chapter V, Section 2, proposition 3]{serre2013local} the norm map is surjective and so is $\varepsilon_p^{k-1}$ since $p\nmid k-1$ by \hyperref[ass1]{assumption 1}. This also implies that the trace from $\mathcal{L}_n(\overline{\Gal(L(p^n)/L(p^{n-1}))})$ to $\mathbb{F}_p$ is surjective and that $$\mathcal{L}_n(\overline{\Gal(L(p^n)/L(p^{n-1}))})=\widehat{M}_2(\mathcal{A}/p\mathcal{A})$$ as desired.
\end{proof}
\section{Examples}
We discuss now some examples. One of the main difficulties in producing examples is to find $f$ such that (P2) holds for some prime $p$. The main tool that we use is an unpublished result of Mascot \cite{Mas22} that we describe briefly. Let $\mathfrak{p}$ be a prime above a rational prime $p$ in the ring of integers of the coefficient field of $f$.  Given a newform $f\in S_k(\Gamma_1(N))$ and the associated representation $\rho_f$, denote by $\rho_{f,\mathfrak{p}}$ the mod $\mathfrak{p}$ reduction of $\rho_f$. Let $p$ be a prime such that $\rho_{f,\mathfrak{p}}$ is irreducible. Serre's modularity conjecture predicts that $\rho_{f,\mathfrak{p}}$ arises also as the mod $\mathfrak{p}$ representation associated to another newform in $S_{k(\rho_{f,\mathfrak{p}})}(\Gamma_1(N(\rho_{f,\mathfrak{p}}))$ for some weight $k(\rho_{f,\mathfrak{p}})$ and level $N(\rho_{f,\mathfrak{p}})$. Indeed the level is the prime to $p$ part of the Artin conductor of $\rho_{f,\mathfrak{p}}$
\begin{equation*}
    N(\rho_{f,\mathfrak{p}}) =\prod_{r\neq q}r^{n_r}
\end{equation*}
where $n_r$ is the sum of $\operatorname{codim}(V^{I_r})$ and the Swan conductor at $r$
\begin{equation*}
    \operatorname{Swan}_r=\sum_{i=1}^{+\infty}\frac{1}{[I_r:G_{i,r}]}\operatorname{codim}(V^{G_{i,r}})
\end{equation*}
where $G_{i,r}$ is the $i$-th ramification group at $r$. If $l$ is a prime such that $l | N(\rho_{f,\mathfrak{p}})$ but $l^2\nmid N(\rho_{f,\mathfrak{p}})$ then since $N(\rho_{f,\mathfrak{p}})|N$ we have that $n_l=1$ from which  it follows that $\operatorname{codim} V^{I_l}=1$ and $\operatorname{Swan}_l=0$. Up to conjugacy we have
\begin{equation*}
    \rho_{\mid I_r}= 
    \begin{pmatrix}
        1 &\xi \\
        0 & \chi\\
    \end{pmatrix}
\end{equation*}
for some nontrivial morphism $\xi:I_l\rightarrow \bb{F}_p$ so by assuming that the $l$-part of the nebentypus of $f$ is trivial we have that $\chi=1$ and $\rho_{f,\mathfrak{p}}(I_l)$ is cyclic of order $l$. By \cite[Proposition 15]{SER72} this implies that the image of the mod $\mathfrak{p}$ representation contains $\operatorname{SL}_2(\bb{F}_p)$. So in summary given a modular form $f\in S_k(\Gamma_1(N,l))=S_k(\Gamma_1(N)\cap\Gamma_0(l))$ with $l\nmid N,$ if the representation at $p$ is irreducible, $l | N(\rho_{f,\mathfrak{p}})$ but $l^2\nmid N(\rho_{f,\mathfrak{p}})$ then the image of $\rho_{f,\mathfrak{p}}$ contains $\operatorname{SL}_2(\bb{F}_p)$. By choosing $\mathfrak{p}$ above $p$ with residual degree one and a prime $r\nmid pN$ such that the characteristic polynomial of the Frobenius at $r$ mod $\mathfrak{p}$ is irreducible in $\bb{F}_p$, the representation is irreducible. We have
\begin{proposition}[Theorem 5 \cite{Mas22}]
    Under the above condition either the representation $\rho_{f,\mathfrak{p}}$ comes from another representation associated to a newform with level $ N(\rho_{f,\mathfrak{p}})$ or the image contains $\operatorname{SL}_2(\bb{F}_p)$.
\end{proposition}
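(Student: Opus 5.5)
The statement is a dichotomy for the mod $\mathfrak{p}$ representation. I would prove it by showing that if $\rho_{f,\mathfrak{p}}$ does \emph{not} come from a newform of level $N(\rho_{f,\mathfrak{p}})$, then the conductor condition forces $l \mid N(\rho_{f,\mathfrak{p}})$ and $l^2\nmid N(\rho_{f,\mathfrak{p}})$. The earlier discussion then gives $\operatorname{SL}_2(\bb{F}_p)\subseteq \operatorname{im}\rho_{f,\mathfrak{p}}$.

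The first step is irreducibility. Choose $\mathfrak{p}$ of residual degree one and a prime $r\nmid pN$ such that $X^2-a_rX+\chi(r)r^{k-1}$ is irreducible mod $\mathfrak{p}$. Then $\rho_{f,\mathfrak{p}}(\frob_r)$ has no eigenvalue in $\bb{F}_p$. Hence $\rho_{f,\mathfrak{p}}$ has no stable line, so it is irreducible. By Serre's conjecture (the Khare--Wintenberger theorem), $\rho_{f,\mathfrak{p}}$ then arises from a newform $g$ of level $N(\rho_{f,\mathfrak{p}})$ and weight $k(\rho_{f,\mathfrak{p}})$.

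The second step compares levels. Since $f$ has level $Nl$ with $l\nmid N$ and $l\,\|\,Nl$, the Artin conductor satisfies $N(\rho_{f,\mathfrak{p}})\mid Nl$. So either $l\nmid N(\rho_{f,\mathfrak{p}})$ or $l\,\|\,N(\rho_{f,\mathfrak{p}})$.
\begin{itemize}
\item In the first case $\rho_{f,\mathfrak{p}}$ is unramified at $l$. Its Serre level then divides $N$, which is strictly smaller than the level of $f$, so $\rho_{f,\mathfrak{p}}$ comes from a newform $g$ of lower level $N(\rho_{f,\mathfrak{p}})$. This is the first alternative of the statement.
\item In the second case $n_l=1$, so $\operatorname{codim}V^{I_l}=1$ and $\operatorname{Swan}_l=0$. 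Inertia at $l$ then acts through a unipotent-by-character shape. Because the $l$-part of the nebentypus is trivial, the character is trivial, so $\rho_{f,\mathfrak{p}}(I_l)$ is generated by a nontrivial transvection and is cyclic of order $p$. (The text writes $l$, but a nontrivial unipotent in characteristic $p$ has order $p$.)
\end{itemize}

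The third step concludes in the second case. The image is irreducible and contains a transvection. By Serre's classification of subgroups of $\operatorname{GL}_2(\bb{F}_p)$ \cite[Proposition 15]{SER72}, an irreducible subgroup containing an element of order $p$ contains $\operatorname{SL}_2(\bb{F}_p)$. Equivalently, it is not contained in a Borel subgroup, and normalizers of Cartans and exceptional groups have order prime to $p$ for $p\ge5$.

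The main obstacle is the second step: ruling out that the conductor drops at $l$ without $\rho_{f,\mathfrak{p}}$ coming from lower level. This is where the level lowering in Serre's conjecture is genuinely used, and where one must check that Ribet-type level lowering is compatible with the nebentypus. I would first handle it by citing the strong form of Serre's conjecture with the optimal level equal to the conductor. If that proves delicate, I would restrict to $k=2$ and use Diamond's result, as the paper anticipates.
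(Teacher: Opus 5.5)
Your proposal is correct and follows essentially the same route as the paper's sketch preceding the statement. You get irreducibility from a Frobenius at $r$ with irreducible characteristic polynomial, then split into two cases: $l\nmid N(\rho_{f,\mathfrak{p}})$, giving lower level by the refined Serre conjecture/level lowering (e.g.\ Diamond for $k=2$), versus $l\,\|\,N(\rho_{f,\mathfrak{p}})$, giving tame unipotent inertia and hence an element of order $p$ (you rightly correct the paper's ``order $l$''), followed by \cite[Proposition 15]{SER72}. The one slip is your ``equivalently'' remark: for $p=5$ a subgroup with projective image $A_5\simeq \mathrm{PSL}_2(\bb{F}_5)$ has order divisible by $5$. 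Such a group already contains $\operatorname{SL}_2(\bb{F}_5)$, though, so citing Serre's Proposition 15 directly is enough.
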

To guarantee that the representation does not come from another newform is a hard task, especially since the bound weight $k(\rho_{f,\mathfrak{p}})$ depends on the prime $p$. Mascot gives a computational method to check this but instead we decided to use a theorem of Diamond for the case $k=2$.
\begin{theorem}[Theorem 4.1 \cite{Dia95}]
    Suppose that $p$ is odd and that $\rho:\Gal(\overline{\bb{Q}}/\bb{Q})\rightarrow \operatorname{GL}_2(\bb{F}_p)$ is an irreducible representation arising from $S_2(\Gamma_1(N,l))$ for some prime $l$ not dividing $Np$. If $\rho$ is unramified at $l$, then $\rho$ arises from $S_2(\Gamma_1(N))$.
\end{theorem}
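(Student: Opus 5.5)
This is Diamond's level-lowering theorem, a deep result that is cited rather than proved here, so what follows is only the plan of the standard argument. The strategy is Ribet's: realise $\rho$ in the $p$-torsion of the Jacobian $J_1(N,l)$ of $X_1(N,l)$, and use the geometry of its reduction at $l$ to move $\rho$ down to $J_1(N)$.

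The steps, in order, are as follows.

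\begin{enumerate}
\item Let $\mathbb{T}$ be the Hecke algebra acting on $J_1(N,l)$. Let $\mathfrak{m}\subset\mathbb{T}$ be the maximal ideal attached to $\rho$; it is non-Eisenstein because $\rho$ is irreducible.
\item By the Boston--Lenstra--Ribet theorem, $J_1(N,l)[\mathfrak{m}]$ is a direct sum of copies of $\rho$. Hence $\rho$ occurs as a Galois submodule of $J_1(N,l)[p]$.
\item Describe the bad reduction at $l$. The special fibre of $X_1(N,l)$ is two copies of $X_1(N)_{\mathbb{F}_l}$ crossing at the supersingular points. So the Néron model of $J_1(N,l)$ at $l$ has identity component that is an extension of $J_1(N)^2$ by a torus $T$. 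The character group of $T$ is the module of degree-zero divisors on the supersingular points.
\item Since $\rho$ is unramified at $l$, Grothendieck's filtration places $\rho$ inside the finite part of the Néron model. This splits into two cases:
\begin{itemize}
\item $\rho$ meets the abelian-variety part, i.e.\ the image of $J_1(N)^2$. Then $\rho$ already occurs in $J_1(N)[p]$, and we are done.
\item $\rho$ lives in the toric part or the component group. Then one uses Ribet's exact sequence relating the character group $X(T)$ of the torus to the $l$-new quotient. One also uses that $\mathfrak{m}$ is not in the support of the component group, proved by an Eisenstein-type argument.
\end{itemize}
\item To finish the toric case, pass to the Shimura curve attached to the quaternion algebra ramified at $l$ and an auxiliary prime, or compare two degeneracy maps via Ihara's lemma. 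The goal is to show that an $\mathfrak{m}$ which is $l$-new and unramified at $l$ forces $\rho(\mathrm{Frob}_l)$ to have eigenvalues with ratio $l^{\pm1}$. This condition, combined with the old-space description, yields an occurrence of $\rho$ at level $N$.
\end{enumerate}

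The main obstacle is Ihara's lemma for $\Gamma_1(N)$ together with the analysis of the component group. This is exactly where Diamond extends Ribet's argument from $\Gamma_0$ to $\Gamma_1$ level, and it is where the hypothesis that $p$ is odd is used.

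In this paper the theorem is simply quoted, and the proof proposal is to defer to \cite[Theorem 4.1]{Dia95}.
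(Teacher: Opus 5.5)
The paper gives no proof of this statement. It quotes \cite[Theorem 4.1]{Dia95} and uses it only to check, in the examples, that the residual representation does not come from lower level. Your actual proposal, deferring to Diamond, is therefore exactly the paper's treatment. Steps 1--4 of your outline are also a sound setup of Mazur's principle:
\begin{itemize}
\item Boston--Lenstra--Ribet gives $V=J_1(N,l)[\mathfrak m]\simeq\rho^{\oplus t}$;
\item the Deligne--Rapoport description of the special fibre applies;
\item unramifiedness at $l$ embeds $V$ into the special fibre of the N\'eron model;
\item the component group of $J_1(N,l)$ at $l$ is Eisenstein, so $\Phi[\mathfrak m]=0$ for irreducible $\rho$.
\end{itemize}

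Step 5 carries the whole content of the theorem, and it does not describe a valid argument. The ``goal'' you state is automatic. Suppose $\rho$ comes from a form new at $l$ whose character is unramified at $l$, and $\rho$ is unramified at $l$. Then $\rho_{\mid G_{\Q_l}}$ is the reduction of an unramified twist of Steinberg, so $\rho(\frob_l)$ has eigenvalues $a_l$ and $la_l$ with no work at all. That ratio-$l$ condition is the hypothesis of level \emph{raising}, which is what Ihara's lemma consumes. It gives no information placing $\rho$ at level $N$; membership in the $l$-old part is exactly what has to be proved.

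What the toric case really yields is the opposite constraint. If $V\subseteq T[\mathfrak m]$, then $\frob_l$ acts on $V$ through $l$ times a Hecke operator (essentially $U_l$ on $X(T)$), hence as a scalar. A scalar is compatible with eigenvalues $a_l$ and $la_l$ only when $l\equiv 1\pmod p$. This disposes of $l\not\equiv 1\pmod p$.

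The genuinely hard case is $l\equiv 1\pmod p$. There Ribet's method trades $l$ for an auxiliary prime $q\not\equiv 1\pmod p$:
\begin{itemize}
\item raise the level at $q$ via Ihara's lemma;
\item use \v{C}erednik--Drinfeld uniformization of the Shimura curve of discriminant $lq$ and Ribet's exact sequence of character groups, which together with unramifiedness at $l$ place $\rho$ at level $Nq$, new at $q$;
\item only then remove $q$ by Mazur's principle.
\end{itemize}
Adapting this to $\Gamma_1(N)$ is what \cite{Dia95} carries out. As a citation your proposal is fine. As a sketch, it omits the essential case and attributes to the tools in step 5 a conclusion they do not deliver.
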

Using the above theorem, checking that a representation is not {old a at $p$} is much easier. For this reason all of our examples will be in weight 2, even thought theoretically there might be examples in any weight, but these are much harder to compute.\\
\begin{itemize}
    \item[--] Let $f\in \Gamma_0(30,\varepsilon)$ be the newform with LMFDB label (30.2.e.a) and $q$-expansion 
    $$f=q + \zeta_8q^2 + (\zeta_8^3 - \zeta_8^2 - 1) q^3+\zeta_8^2q^4 + (-\zeta_8^3 - 2\zeta_8)q^5+ O(q^6).$$
    The nebentypus $\varepsilon$ has conductor 15 so actually we have that $f\in \Gamma_1(15)\cap\Gamma_0(2)=\Gamma_1(15,2)$ and the Hecke field is the cyclotomic field $\Q(\zeta_8)$ that has degree 4 over $\bb{Q}$. The prime $p=409$ splits into four distinct factors since the eighth cyclotomic polynomial $\Phi_8(x)=X^4+1$ has four roots in $\mathbb{F}_p[X]$ (namely 31, 66, 343, 378) and $a_p(f)=0$. Taking $r=13$ we have that $a_{13}(f)=0$ so the characteristic polynomial of the Frobenius is
    $$
        X^2+13\varepsilon(13) \mod (p).
    $$
    Since $\varepsilon(13)=-i$ we have to check that $-13i$ is not a square in $\mathbb{F}_p$. If there existed $x\in\bb{F}_p$ such that $x^2=-13i$ then $x^4=-169$ but a quick sage computation shows that no element satisfies this property so we conclude that the mod $p$ representation is irreducible. Using the LMFDB databse we see that there are no newforms of level 15 with nebentypus $\varepsilon$ so we conclude that the representation has large image.
    \item[--] Let $f\in\Gamma_0(21,\varepsilon)$ be the newform (21.2.e.a) with $q$-expansion
    $$f(q)=q + (2 \zeta_{6} - 2) q^{2} - \zeta_{6} q^{3} - 2 \zeta_{6} q^{4} + ( - 2 \zeta_{6} + 2) q^{5} + 2 q^{6} + O(q^7).$$
    The nebentypus $\varepsilon$ has conductor 7 so $f\in\Gamma_1(7,3)$ and the Hecke field is $\Q(\sqrt{-3})$. Taking $p=199$ we have that $p$ has residual degree 1 and $a_p(f)=0$. Taking $r=701$ we have also $a_r(f)=0$ and $\varepsilon(701)=1$ so the characteristic polynomial becomes $X^2-701$ mod $(p)$. Since 701 is not a square modulo 199 the polynomial is irreducible and the mod $p$ representation is irreducible. Just as before there are no newforms of weight 2 and level 7 with character $\varepsilon$ so $f$ cannot be old at $p$ and the image is large.
\end{itemize}
We list some newforms with $a_p(f)=a_r(f)=0$, $\varepsilon(r)=\pm1$ and $r\varepsilon(r)$ that is not a square mod $p$ as in the previous example.
\hfill
\newline
\setlength{\tabcolsep}{4pt}
\renewcommand{\arraystretch}{1.3}
\begin{center}
\begin{tabular}
{| c | c | c | c | c | c | c | c |}
 \hline
 \multicolumn{7}{|c|}{Newforms List} \\
 \hline
 $f$& $N$ & Hecke field & cond($\varepsilon)$ &$r$&$p$&\ $X^2-a_r(f)X+r\varepsilon(r)$\\
 \hline
 $24.2.d.a$ & 24 & $\Q(i)$          & 8  & 11 & 61  & $X^2-11$\\
 $26.2.c.a$ & 26 & $\Q(\sqrt{-3})$  & 13 & 83 & 19  & $X^2+83$\\
 $26.2.b.a$ & 26 & $\Q(i)$          & 13 & 11 & 29  & $X^2-11$\\
 $30.2.c.a$ & 30 & $\Q(i)$          & 5 & 19 & 29   & $X^2+19$\\
 $34.2.c.a$ & 34 & $\Q(i)$          & 17 & 89 & 37  & $X^2+89$\\
 $35.2.b.a$ & 35 & $\Q(i)$          & 5 & 19 & 73   & $X^2+19$\\
 $39.2.j.a$ & 39 & $\Q(\sqrt{-3})$  & 13 & 239 & 37 & $X^2-239$\\
 $39.2.b.a$ & 39 & $\Q(\sqrt{-3})$  & 5 & 31 & 73   & $X^2-31$\\
 $42.2.e.a$ & 42 & $\Q(\sqrt{-3})$  & 7 & 41 & 13   & $X^2+41$\\
 $45.2.e.a$ & 45 & $\Q(\sqrt{-3})$  & 9 & 937 & 31  & $X^2+937$\\
 \hline
\end{tabular}
\end{center}
\hfill\newline
One checks that these forms cannot come from newforms of lower level since the spaces of newforms of level $N/l$ are all trivial. Note also that all of the newforms are twist minimal i.e. they are the forms with minimal level among their twist class. All of these forms have $a_p(f)=0$ with $p\nmid N$ and big image by construction; moreover since $p\geq 5$ and $k=2$ Assumption \ref{ass1} is satisfied. By Theorem \ref{main:thm} we conclude that for each $f$ listed, the representation $\widehat{\rho}_f$ has (B).
\section*{Acknowledgments}
I want to thank Lea Terracini and Andrea Ferraguti who provided the insights and ideas that made this paper possible. I also want to thank Andrea Conti for the helpful discussions and the warm hospitality in Heidelberg where this project has been finalized and Francesco Amoroso who introduced me to the theory of heights.\\ The author is a member of the INDAM group GNSAGA.
\newpage
\printbibliography
\newpage
\end{document}